\documentclass[11pt,reqno]{amsart}

\usepackage{amssymb,amsmath,amsfonts,amsthm}
\usepackage{bm,cite,graphicx, color}
\usepackage[toc,page]{appendix}

\setlength{\topmargin}{-1.5cm}
\setlength{\oddsidemargin}{0.0cm}
\setlength{\evensidemargin}{0.0cm}
\setlength{\textwidth}{16.7cm}
\setlength{\textheight}{23cm}
\headheight 20pt
\headsep    26pt
\footskip 0.4in

\newtheorem{theorem}{Theorem}[section]
\newtheorem{lemma}[theorem]{Lemma}
\newtheorem{corollary}[theorem]{Corollary}
\newtheorem{definition}[theorem]{Definition}

\newtheorem{assumption}{Assumption}
\numberwithin{equation}{section}

\allowdisplaybreaks[4]

\begin{document}

\title[inverse elastic scattering for a random potential]{Inverse Elastic
Scattering for a Random Potential}

\author{Jianliang Li}
\address{School of Mathematics and Statistics, Changsha University of Science
and Technology, Changsha 410114, P.R. China.}
\email{lijl@amss.ac.cn}

\author{Peijun Li}
\address{Department of Mathematics, Purdue University, West Lafayette, Indiana
47907, USA.}
\email{lipeijun@math.purdue.edu}

\author{Xu Wang}
\address{Department of Mathematics, Purdue University, West Lafayette, Indiana
47907, USA.}
\email{wang4191@purdue.edu}


\subjclass[2010]{35R30, 35R60, 60H15}

\keywords{inverse scattering problem, elastic wave equation, generalized
Gaussian random field, pseudo-differential operator, principal symbol,
uniqueness.} 

\begin{abstract}
This paper is concerned with an inverse scattering problem for the time-harmonic
elastic wave equation with a random potential. Interpreted as a distribution, the
potential is assumed to be a microlocally isotropic generalized Gaussian random
field with the covariance operator being described by a classical
pseudo-differential operator. The goal is to determine the principal symbol of
the covariance operator from the scattered wave measured in a bounded domain
which has a positive distance from the domain of the potential. For such a rough
potential, the well-posedness of the direct scattering problem in the
distribution sense is established by studying an equivalent Lippmann--Schwinger
integral equation. For the inverse scattering problem, it is shown with
probability one that the principal symbol of the covariance operator can be
uniquely determined by the amplitude of the scattered waves averaged over the
frequency band from a single realization of the random potential. The analysis
employs the Born approximation in high frequency, asymptotics of the Green
tensor for the elastic wave equation, and microlocal analysis for the Fourier
integral operators. 
\end{abstract}

\maketitle

\section{Introduction and statement of the main result}

The scattering problems for elastic waves have significant applications in
diverse scientific areas such as geophysical exploration and nondestructive
testing \cite{ LL-86, UWW}. In medical diagnostics, elastography
is an emerging imaging modality that seeks to determine the mechanical
properties of elastic media from their response to exciting forces \cite{OCPY}.
By mapping the elastic properties and stiffness of soft tissues, it can give
diagnostic information about the presence or status of disease \cite{GFI}.
Driven by these applications, the underlying inverse problems, which are to
determine the medium properties based on the elastic wave equation, have been
extensively studied and many mathematical results are available, especially for
the uniqueness \cite{BFPRV, H1, H2, Ra-TAMS}. We refer to \cite{ABG-15} for a
comprehensive account of mathematical methods in elasticity imaging.  

Stochastic modeling has been widely adopted to handle problems involving
uncertainties and randomness. In the research area of wave propagation, the wave
fields may not be deterministic but rather are described by random fields due
to the uncertainties for the media and/or the environments. Therefore, it is
more appropriate to consider the stochastic wave equations to describe the wave
motion in random settings. In addition to the ill-posedness and nonlinearity, stochastic inverse
problems have substantially more difficulties than their deterministic
counterparts. The random parameters to be determined in stochastic inverse
problems can not be characterized by a particular realization, but instead, by
its statistics, such as expectation and covariance. Hence, the relationship
between these statistics and the wave fields needs to be established. In
general, the statistics of the data for the wave fields are required, which
significantly increases the computational cost since a large number of
realizations is needed. It is an important and challenging problem to determine
the statistics of the random parameters through fewer realizations of the wave
fields.

The paper is concerned with an inverse scattering problem for the time-harmonic
elastic wave equation with a random potential in two dimensions.
Specifically, we consider the stochastic elastic wave equation
\begin{eqnarray}\label{a1}
\mu\Delta \bm{u}+(\lambda+\mu)\nabla\nabla\cdot
\bm{u}+\omega^2\bm{u}-\rho\bm{u}=-\delta_y\bm{a} \quad {\rm in} ~ \mathbb R^{2},
\end{eqnarray}
where $\omega>0$ is the
angular frequency,  $\bm{a}$ is a unit polarization vector in
$\mathbb R^2$, $\delta_y(\cdot):=\delta(\cdot-y)$ is the Dirac delta function concentrated at the
source point $y\in\mathbb R^2$, $\lambda$ and $\mu$ are
the Lam\'{e} parameters satisfying $\mu>0$ and $\lambda+2\mu>0$ such that the
second-order partial differential operator
$\Delta^*:=\mu\Delta+(\lambda+\mu)\nabla\nabla\cdot$ is strongly elliptic
(cf. \cite[10.4]{M}).
The randomness of (\ref{a1}) comes from the potential $\rho$, which describes a
linear load inside a known homogeneous and isotropic elastic solid and is
considered to be a generalized Gaussian random field. Throughout, the
potential $\rho$ is required to satisfy the following assumption. 

\begin{assumption}\label{as:rho}
Assume that the centered random potential $\rho$ is a microlocally isotropic
Gaussian random field of order $-m$ in $D$ with $m\in(1,2]$ and $D$ being a bounded domain. More precisely, $\rho$ has the principal symbol $\phi(x)|\xi|^{-m}$, where $\phi$ is called the micro-correlation strength of the potential $
\rho$ and satisfies $\phi\in C_0^\infty(D), \phi\geq 0$.
\end{assumption}

The displacement of the total field $\bm u\in\mathbb C^2$ in (\ref{a1}) can be
decomposed into 
\begin{eqnarray*}
\bm{u}(x,y)=\bm{u}^{ i}(x,y)+\bm{u}^{ s}(x,y),
\end{eqnarray*}
where $\bm{u}^{ s}$ represents the scattered field and $\bm{u}^{ i}$ is
the incident field given by
\begin{eqnarray*}
\bm{u}^{ i}(x,y)=\bm{G}(x,y,\omega)\bm{a},\quad x\neq y.
\end{eqnarray*}
Here, $\bm{G}(x,y,\omega)\in\mathbb C^{2\times 2}$ denotes the Green tensor for the
Navier equation. Explicitly, 
\begin{eqnarray}\label{a4}
\bm{G}(x,y,\omega)=\frac{1}{\mu}\Phi(x,y,\kappa_{\rm
s})\bm{I}+\frac{1}{\omega^2}\nabla_x\nabla_x^\top\Big[\Phi(x,y,
\kappa_{\rm s})-\Phi(x,y,\kappa_{\rm p})\Big],
\end{eqnarray}
where $\bm{I}$ is the $2\times 2$ identity matrix, $\nabla_x=(\partial_{x_1},\partial_{x_2})^\top$ is the gradient operator,
$\Phi(x,y,\kappa)=\frac{\rm i}{4}H_0^{(1)}(\kappa|x-y|)$ is the fundamental
solution of the two-dimensional Helmholtz equation 
with $H_0^{(1)}$ being the Hankel function of the first kind with order zero, 
$\kappa_{\rm p}:=c_{\rm p}\omega$ and
$\kappa_{\rm s}:=c_{\rm s}\omega$ with $c_{\rm p}=(\lambda +2\mu)^{-\frac{1}{2}}$
and $c_{\rm s}=\mu^{-\frac{1}{2}}$ are known as the compressional and shear wavenumbers,
respectively.

Since the elastic wave equation (\ref{a1}) is imposed in the whole space
$\mathbb R^2$, an appropriate radiation condition is needed to complete the
problem formulation. By the Helmholtz decomposition (cf. \cite[Appendix
B]{BLZ}), the scattered field $\bm{u}^{ s}$ can be decomposed into 
the compressional wave component $\bm{u}^{ s}_{\rm p}$ and the shear wave
component $\bm{u}^{s}_{\rm s}$, i.e., 
\[
\bm{u}^{ s}=\bm{u}^{ s}_{\rm p}+\bm{u}^{ s}_{\rm s}
\quad\text{in} ~ \mathbb R^2\setminus \overline{D}.
\]
The Kupradze--Sommerfeld radiation condition requires that $\bm{u}^{ s}_{\rm
p}$ and $\bm{u}^{s}_{\rm s}$ satisfy the Sommerfeld radiation
condition
\begin{eqnarray}\label{a6}
\lim_{r\rightarrow\infty}r^{\frac{1}{2}}\left(\partial_r
\bm{u}^{ s}_{\rm p} -{\rm i}\kappa_{\rm p}\bm{u}^{ s}_{\rm
p}\right)=0,\quad \lim_{r\rightarrow\infty}r^{\frac{1}{2}}\left(\partial_r
\bm{u}^{s}_{\rm s} -{\rm i}\kappa_{\rm s}\bm{u}^{ s}_{\rm
s}\right)=0,\quad r=|x|.
\end{eqnarray}

As is known, the inverse scattering problems are challenging due to the nonlinearity and ill-posedness. Apparently, the stochastic inverse scattering problems are even harder in order to handle the extra difficulties of randomness and uncertainties. 
There are very few results concerning the solutions of the stochastic inverse scattering problems. For the inverse random source scattering problems, when the source is driven by an additive
white noise, effective mathematical models and efficient computational methods
have been proposed for the stochastic acoustic and elastic wave equations
\cite{BCL16, BCLZ, PL, LCL, BCL18, BX}. To determine the unknown parameters in
the above models, in general, the data of the expectation and variance for the
measured wave field is needed, and hence a fairly large number of realizations
of the random source is required. If the source is
described as a generalized Gaussian random field whose covariance is a
classical pseudo-differential operator, the results of uniqueness were
established in \cite{LHL, LL} for the stochastic acoustic and elastic wave
equations. It was shown that the principal symbol of the covariance operator can
be uniquely determined by the amplitude of the wave field averaged over the
frequency band. It is worth mentioning that the methods in \cite{LHL, LL} only
require the scattering data corresponding to a single realization of the random
source. For the random Schr\"{o}dinger equation in two dimensions where the
potential is a generalized Gaussian random field, it was proved in \cite{CHL}
and \cite{LPS08} that the principal symbol of the covariance operator can be
uniquely determined by the backscattered far-field data associated with the
plane wave and the scattered wave field associated with the point source,
respectively. Similarly, the approach only needs a single realization of the
random potential. A related work can be found in \cite{HLP}, where an inverse
scattering problem in a half-space with an impedance boundary condition was
studied where the impedance function is modeled as a generalized Gaussian random field.

In this work, we study both the direct and inverse scattering problems
for the stochastic elastic wave equation \eqref{a1} along with the radiation
condition \eqref{a6}. Given the random potential $\rho$, which may be so rough
that it can only be interpreted as a distribution, the direct scattering
problem is to determine the displacement $\bm{u}$ which satisfies \eqref{a1}
and \eqref{a6} in an appropriate sense. Using Green's theorem and the
Kupradze--Sommerfeld radiation condition, we deduce an equivalent
Lippmann--Schwinger integral equation. Based on the
Fredholm alternative theorem and the unique continuation principle,
the Lippmann--Schwinger equation is shown to have a unique solution in the
Sobolev space with a positive smoothness index. The inverse scattering problem
is to determine the micro-correlation strength $\phi(x)$ from the scattered field measured in a bounded and convex domain $U$
which has a positive distance from $D$, i.e., $U\subset\mathbb R^2\setminus
\overline{D}$.

It is clear to note from the elastic wave equation (\ref{a1}) that the
displacement $\bm u$ depends on the observation point $x$, the location of the
source point $y$, the angular frequency $\omega$, and the unit polarization
vector $\bm{a}$. To express explicitly the dependence of $\bm u$ on these
quantities, we write $\bm{u}(x,y,\omega,\bm{a})$,
$\bm{u}^{i}(x,y,\omega,\bm{a})$, $\bm{u}^{s}(x,y,\omega,\bm{a})$, and
$\bm{u}_j(x,y,\omega,\bm{a})$ in the Born series (cf. (\ref{c2}) and (\ref{c3})
for the definition of $\bm{u}_j$) for $\bm{u}(x,y)$, $\bm{u}^{i}(x,y)$,
$\bm{u}^{s}(x,y)$, and $\bm{u}_j(x,y)$, respectively. Moreover, when the
observation point $x$ coincides the source point $y$, for simplicity, we write
$\bm{u}^{s}(x,\omega,\bm{a})$ and $\bm{u}_j(x,\omega,\bm{a})$ for 
$\bm{u}^{s}(x,x,\omega,\bm{a})$ and $\bm{u}_j(x,x,\omega,\bm{a})$, respectively.

The following theorem concerns the uniqueness of the inverse scattering
problem and is the main result of this paper. 

\begin{theorem}\label{theorem1}
Let $\rho$ satisfy Assumption \ref{as:rho}
and additionally $m>\frac53$. Let $U\subset\mathbb R^2\backslash\overline{D}$ be
a bounded and convex domain having a locally Lipschitz boundary and a positive distance
from $D$. Then for all $x\in U$, it holds almost surely that 
\begin{eqnarray}\label{a7}
\lim_{Q\to\infty}\frac{1}{Q-1}\int_1^Q\omega^{m+2}\sum_{j=1}^2|\bm{u}^{
s}(x,\omega,\bm{a}_j)|^2d\omega=\frac{c_{\rm s}^{6-m}+c_{\rm p}^{6-m}}{2^{m+6}\pi^2}\int_{\mathbb
R^2}\frac{1}{|x-\zeta|^{2}}\phi(\zeta)d\zeta, 
\end{eqnarray}
where $\bm{a}_1$ and $\bm{a}_2$ are orthonormal vectors in $\mathbb R^2$. Moreover, the function
$\phi$ can be uniquely determined from the integral equation (\ref{a7}) for all
$x\in U$.
\end{theorem}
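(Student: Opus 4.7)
The plan is to proceed via the Born series $\bm u^s=\bm u_1+\sum_{k\ge 2}\bm u_k$, obtained by iterating the Lippmann--Schwinger equation established earlier in the paper, with $\bm u_1$ linear in $\rho$. The first and most technical task is to show that, for $m>5/3$, only $\bm u_1$ contributes to the leading behavior of the $\omega^{m+2}$-weighted frequency average in \eqref{a7}, i.e.\ that $\omega^{m+2}\sum_{j=1}^2|\sum_{k\ge 2}\bm u_k(x,\omega,\bm a_j)|^2\to 0$ almost surely, uniformly enough in $\omega$ to pass to the Ces\`aro limit. Since $\rho$ has almost-sure Sobolev regularity roughly $H^{(m-2)/2-\varepsilon}_{\mathrm{loc}}$ (negative for $m<2$) and each iterated volume potential buys a half-derivative at the cost of a controlled power of $\omega$, the sharper threshold $m>5/3$ (beyond the well-posedness hypothesis $m>1$) is precisely what such an estimate requires.

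For the leading term I would substitute the explicit Green tensor \eqref{a4} together with the large-argument Hankel asymptotics $H_0^{(1)}(\kappa r)\sim(2/\pi\kappa r)^{1/2}e^{\mathrm i(\kappa r-\pi/4)}$, and write $\bm u_1(x,x,\omega,\bm a_j)$ as a sum of four oscillatory integrals with phases $e^{\mathrm i(\kappa_a+\kappa_b)|x-\zeta|}$, $a,b\in\{\mathrm p,\mathrm s\}$, the amplitudes carrying appropriate powers of $c_{\mathrm p}$ and $c_{\mathrm s}$ together with tensor projectors onto the longitudinal and transverse directions. Summing $|\bm u_1|^2$ over the orthonormal polarizations $\bm a_1,\bm a_2$ collapses the tensor structure via the resolution of the identity and produces a finite family of double oscillatory integrals over $D\times D$ whose phases are linear in $\omega$ with coefficient $(c_a+c_b)|x-\zeta_1|-(c_c+c_d)|x-\zeta_2|$. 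Only the diagonal-resonant choices survive the Ces\`aro frequency average; a polarization calculation then eliminates the remaining mixed PS/SP contributions, leaving the pure PP and SS pieces that produce the announced factors $c_{\mathrm p}^{6-m}$ and $c_{\mathrm s}^{6-m}$.

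To replace the frequency average by an ensemble expectation I would prove a uniform bound $\mathrm{Var}(\omega^{m+2}|\bm u_1|^2)\le C$ together with a decorrelation estimate in $\omega$, and invoke Chebyshev and Borel--Cantelli along a geometric subsequence to obtain almost-sure Ces\`aro convergence to $\lim_{\omega\to\infty}\mathbb E[\omega^{m+2}|\bm u_1|^2]$. This expectation is evaluated through microlocal isotropy: the covariance of $\rho$ is a classical pseudodifferential operator of order $-m$ with principal symbol $\phi(x)|\xi|^{-m}$, so after pairing $\bm u_1$ with its conjugate the double integral over $D\times D$ concentrates on the diagonal $\zeta_1=\zeta_2$ by stationary phase, and a two-dimensional Fourier transform of $|\xi|^{-m}$ supplies the Riesz-type kernel $|x-\zeta|^{-2}$ together with the explicit constant $(c_{\mathrm s}^{6-m}+c_{\mathrm p}^{6-m})/(2^{m+6}\pi^2)$.

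Uniqueness of $\phi$ from \eqref{a7} is the softest step: the right-hand side is the convolution of $\phi\in C_0^\infty(D)$ with $|x|^{-2}$, which is real-analytic on $\mathbb R^2\setminus D$, so knowing it on the open set $U\subset\mathbb R^2\setminus\overline D$ determines it on all of $\mathbb R^2\setminus D$, after which Fourier inversion (the distributional Fourier transform of $|x|^{-2}$ in two dimensions vanishes on no open set) recovers $\phi$. The main anticipated obstacle is the very first task above: uniform-in-$\omega$ almost-sure control of the Born tail against the sharp $\omega^{m+2}$ weight requires delicate estimates because $\rho$ has negative Sobolev regularity and the scattering resolvent degrades in $\omega$ at a borderline rate in two dimensions, which is exactly the reason for assuming $m>5/3$ rather than only $m>1$.
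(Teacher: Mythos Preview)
Your overall strategy and your treatment of $\bm u_1$ are essentially those of the paper: truncate the Green tensor via Hankel asymptotics, reduce $\mathbb E|\bm u_1^{(2)}|^2$ to oscillatory integrals with phase $(c_1\omega_1|x-z|-c_2\omega_2|x-z'|)$ analyzed by microlocal techniques, exploit that the longitudinal and transverse projectors in $\bm G$ are orthogonal so the SP/PS cross terms in $\bm G^2$ vanish, sum over $\bm a_1,\bm a_2$ to collapse the remaining projectors, and pass from expectation to the almost-sure Ces\`aro limit by a decorrelation-in-$\omega$ argument. The paper invokes an ergodic-type lemma from \cite{LHL} in place of your Borel--Cantelli subsequence argument, but this is a cosmetic difference.

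The genuine gap is your handling of the Born tail. You propose to show that $\sum_{k\ge 2}\bm u_k$ is negligible using only the operator-norm bound on $K_\omega$ (``each iterated volume potential buys a half-derivative at the cost of a controlled power of $\omega$''). This works for $\bm b=\sum_{k\ge 3}\bm u_k$ but \emph{fails for $\bm u_2$}. Indeed, the sharpest available bound $\|K_\omega\|_{\mathcal L(\bm H^s_{-1})}\lesssim\omega^{-1+2s}$ requires $s>\tfrac{2-m}{2}$, and combined with $\|\bm G(x,\cdot)\|_{W^{s,\infty}(D)}\lesssim\omega^{-1/2+s}$ and $\|\bm u_0\|_{\bm H^s_{-1}(D)}\lesssim\omega^{-1/2+s}$ it gives $|\bm u_2|\lesssim\omega^{-2+4s}$. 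The Ces\`aro integral $\tfrac1{Q-1}\int_1^Q\omega^{m+2}|\bm u_2|^2\,d\omega$ then behaves like $Q^{m-2+8s}$, which tends to $0$ only if $s<\tfrac{2-m}{8}$; this is incompatible with $s>\tfrac{2-m}{2}$ for every $m\in(\tfrac53,2]$. The threshold $m>\tfrac53$ by itself does not rescue the argument.

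The paper therefore treats $\bm u_2$ separately and by a completely different mechanism (Section~\ref{sec:5.2} and Appendix~\ref{app:2}). One replaces the outer and inner Green tensors by their leading asymptotic $\bm G^{(0)}$, the errors being $O(\omega^{-3/2})$ and hence harmless, and reduces to the auxiliary term
\[
\bm v(x,\omega,\bm a)=\int_D\int_D\bm G^{(0)}(x,z)\rho(z)\bm G^{(0)}(z,z')\rho(z')\bm G^{(0)}(z',x)\bm a\,dz'dz.
\]
For $\bm v$ one does not bound $|\bm v(x,\omega)|$ pointwise in $\omega$; instead one writes $\bm v$ as a one-dimensional Fourier transform in the phase variable $t=c_1|x-z|+c_2|z-z'|+c_3|z'-x|$ and uses Parseval to convert $\int_1^\infty\mathbb E|\bm v|^2\,d\omega$ into an integral of the Wick-expanded fourth moment of $\rho$ over a level-set foliation of $D\times D$, which is finite for $m>\tfrac53$. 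This $L^2$-in-$\omega$ argument is the essential missing ingredient in your proposal.
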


Since the scattered field $\bm{u}^{s}$ depends on the realization of the
random potential $\rho$, the scattering data given on the left hand side of
(\ref{a7}) is random for any finite $Q$. However, (\ref{a7}) indicates that the
scattering data is statistically stable when $Q$ approaches to infinity, i.e.,
it is independent of the realization of the potential. The main result
demonstrates that the function $\phi$ can be uniquely determined by the
amplitude of two scattered fields averaged over the frequency band,
which are generated by a single realization of the random potential. Here, the
two scattered fields are excited by the incident waves
$\bm{G}\bm{a}_1$ and $\bm{G}\bm{a}_2$. The proof of the main result is
quite technical. The analysis employs the Born approximation in the high frequency regime,
the asymptotics of Green's tensor for the elastic wave equation, and microlocal
analysis for the Fourier integral operators.

For readability, we briefly sketch the steps of the proof for the main result.
As mentioned above, the scattering problem (\ref{a1}) and (\ref{a6}) can be
equivalently formulated as a Lippmann--Schwinger integral equation which admits
a unique solution. A careful analysis shows that the Born series of the
Lippmann--Schwinger integral equation $\sum_{j=0}^{\infty}\bm{u}_j$ (cf. 
(\ref{c2}) and (\ref{c3}) for the definition of $\bm{u}_j$) converges to the
unique solution to the direct scattering problem when the angular frequency
$\omega$ is sufficiently large. Hence, the scattered field $\bm
u^s$ can be written as 
\begin{eqnarray*}
\bm{u}^{s}=\bm{u}_1+\bm u_2+\bm{b},\quad \bm{b}=\sum_{j=3}^{\infty}\bm{u}_j, 
\end{eqnarray*}

For the first item $\bm{u}_1$, by employing the asymptotic expansions of the
Green tensor and microlocal analysis for the Fourier integral operators via
multiple coordinate transformations, we show in Theorems \ref{thm3} that
\begin{eqnarray}\label{a9}
\lim_{Q\to\infty}\frac{1}{Q-1}\int_1^Q\omega^{m+2}\sum_{j=1}^2|\bm{u}_1
(x,\omega,\bm{a}_j)|^2d\omega=\frac{c_{\rm s}^{6-m}+c_{\rm p}^{6-m}}{2^{m+6}\pi^2}\int_{\mathbb
R^2}\frac{1}{|x-\zeta|^{2}}\phi(\zeta)d\zeta.
\end{eqnarray}
It is shown in Theorem \ref{thm4} that the contribution of the second item $\bm u_2$ can be neglected, i.e., 
\begin{eqnarray}
\lim_{Q\to\infty}\frac{1}{Q-1}\int_1^Q\omega^{m+2}|\bm{u}_2(x,\omega,
\bm{a})|^2d\omega=0.  
\end{eqnarray}
For the remaining term $\bm{b}$, by means of estimating the order with respect to
the angular frequency $\omega$, we deduce in Section \ref{sec:5.3} that 
\begin{eqnarray}\label{a11}
\lim_{Q\to\infty}\frac{1}{Q-1}\int_1^Q\omega^{m+2}|\bm{b}(x,\omega,
\bm{a})|^2d\omega=0.  
\end{eqnarray}
Finally, the main result follows from (\ref{a9})--(\ref{a11}) and the
Cauchy--Schwartz inequality.

The paper is organized as follows. In Section \ref{sec:2}, we briefly introduce the
microlocally isotropic generalized Gaussian random fields and present some of
their properties. Section \ref{sec:3} concerns the well-posedness of the direct
scattering problem. We show that the direct problem is equivalent to a
Lippmann--Schwinger integral equation which is uniquely solvable for a
distributional potential. In Section \ref{sec:4}, the Born series is studied for the
Lippmann--Schwinger integral equation in the high frequency regime. Sections \ref{sec:5}
is devoted to the inverse scattering problems. The paper is concluded with some general remarks and
directions for future work in Section \ref{sec:6}.

\section{Generalized Gaussian random fields}
\label{sec:2}

In this section, we give a brief introduction to the microlocally
isotropic generalized Gaussian random fields in $\mathbb R^d$, $d=2$ or $3$. Let $C_0^{\infty}(\mathbb R^d)$
be the set of smooth functions with compact support, and
$\mathcal{D}:=\mathcal{D}(\mathbb{R}^d)$ be the space of test functions, which
is $C_0^{\infty}(\mathbb R^d)$ equipped with a locally convex topology.
The dual space $\mathcal{D'}:=\mathcal{D'}(\mathbb R^d)$ of $\mathcal{D}$ is
called the space of distributions on $\mathbb{R}^d$ and is equipped with a
weak-star topology (cf. \cite{AF03}). Denote by $(\Omega,
\mathcal{F},\mathbb{P})$ a complete probability space, where $\Omega$ is a samples pace, $\mathcal F$ is a $\sigma$-algebra on $\Omega$, and $\mathbb P$ is a probability measure on the measurable space $(\Omega, \mathbb F)$. 

A function $\rho$ is said to be a generalized random field if, for each
$\tilde{\omega}\in\Omega$, the realization $\rho(\tilde{\omega})$ belongs to
$\mathcal{D'}(\mathbb R^d)$ and the mapping
\begin{eqnarray}\label{l1}
\tilde{\omega}\in\Omega\longmapsto \langle \rho(\tilde{\omega}),\psi\rangle\in\mathbb R
\end{eqnarray}
is a random variable for all $\psi\in \mathcal{D}$, where
$\langle\cdot,\cdot\rangle$ denotes the dual product between $\mathcal{D}'$ and
$\mathcal{D}$. The distributional derivative of $\rho\in\mathcal{D}'$ is defined
by
\[
\langle\partial_{x_j}\rho,\psi\rangle=-\langle
\rho,\partial_{x_j}\psi\rangle\quad\forall\,\psi\in\mathcal{D},\quad j=1,
\dots, d. 
\]
A generalized random field is said to be Gaussian if (\ref{l1}) defines
a Gaussian random variable for all $\psi\in \mathcal{D}$.

For a generalized random field $\rho\in\mathcal{D}'$, we can define its
expectation $\mathbb E\rho\in\mathcal{D}'$ and covariance operator $Q_{\rho}:
\mathcal{D}\rightarrow \mathcal{D'}$ as follows:
\begin{align*}
\langle\mathbb{E}\rho,\psi\rangle &:=\mathbb{E}\langle \rho,\psi\rangle\quad\forall\,\psi\in\mathcal{D},\\
\langle Q_{\rho}\psi_1,\psi_2\rangle &:={\rm Cov}(\langle \rho,\psi_1\rangle,\langle \rho,\psi_2\rangle)=\mathbb E\left[(\langle
\rho,\psi_1\rangle-\mathbb E\langle \rho,\psi_1\rangle)(\langle
\rho,\psi_2\rangle-\mathbb E\langle \rho,\psi_2\rangle)\right]\quad\forall\,\psi_1,\psi_2\in\mathcal{D}.
\end{align*}
It follows from the continuity of $Q_\rho$ and the Schwartz kernel theorem that there exists a unique kernel function $\mathcal{K}_\rho(x,y)$ satisfying 
\begin{eqnarray*}
\langle  {Q}_{\rho}\psi_1,\psi_2\rangle=\int_{\mathbb{R}^d}\int_{\mathbb{R}^d}\mathcal{K}_\rho(x,y)\psi_1(x)\psi_2(y)dxdy\quad\forall\,\psi_1,\psi_2\in \mathcal{D}.
\end{eqnarray*} 
The following definition can be found in \cite{LPS08} on the microlocally
isotropic generalized Gaussian random fields. 

\begin{definition}\label{def1}
A generalized Gaussian random field $\rho$ on $\mathbb R^d$ is called
microlocally isotropic of order $-m$ in $D$ with $m\ge0$ if the
realizations of $\rho$ are almost surely supported in $D$ and its covariance
operator $Q_\rho$ is a classical pseudo-differential operator having an
isotropic principal symbol $\phi(x)|\xi|^{-m}$ with $\phi\in
C_0^\infty(\mathbb{R}^d)$, $supp\,\phi\subset D$ and $\phi\ge0$.
\end{definition}

Without loss of generality, we choose the bounded domain $D$ which not only
contains the support of $\rho$ almost surely but also has a locally Lipschitz
boundary.

To have a better understanding of microlocally isotropic Gaussian random fields,
we give an example by introducing the centered fractional Gaussian fields (cf.
\cite{LW, LSSW16}) defined by
\begin{align}\label{eq:FGF}
f_m(x):=(-\Delta)^{-\frac m4}\dot{W}(x),\quad x\in\mathbb{R}^d,
\end{align}
where $(-\Delta)^{-\frac m4}$ is the fractional Laplacian and
$\dot{W}\in\mathcal{D}'$ denotes the white noise. It is shown in \cite{LW} that
the kernel $\mathcal{K}_{f_m}(x,y)$ of $f_m$ is isotropic since its value
depends only on the distance between $x$ and $y$, and $f_m$ is a
microlocally isotropic Gaussian random field of order $-m$ and
satisfies Definition \ref{def1} with $\phi\equiv1$.
In particular, if $m\in(d,d+2)$, the fractional Gaussian field $f_m$ defined above 
is a translation of the classical fractional Brownian motion. More precisely, 
\[
\tilde f_m(x):=\langle f_m,\delta_x-\delta_0\rangle,\quad x\in\mathbb{R}^d
\]
has the same distribution as the classical fractional Brownian motion with Hurst
parameter $H=\frac{m-d}2\in(0,1)$ up to a multiplicative constant.

The regularity and kernel functions can be obtained for the microlocally
isotropic Gaussian random fields by using the relationship between
them and the fractional Gaussian fields defined in \eqref{eq:FGF}. It is clear
to note that the fractional Gaussian field $f_m$ defined by \eqref{eq:FGF} has
the same regularity as the microlocally isotropic Gaussian random field $\rho$
of order $-m$ in Definition \ref{def1}. Hence, we may have the following
regularity results for the microlocally isotropic Gaussian random fields. The
proof can be found in \cite{LW}.

\begin{lemma}\label{le1}
Let $\rho$ be a microlocally isotropic Gaussian random field of
order $-m$ in $D$ with $m\in[0,d+2)$. 
\begin{itemize}
\item[(i)] If $m\in(d,d+2)$, then $\rho\in C^{0,\alpha}(D)$ almost surely for
all $\alpha\in(0,\frac{m-d}2)$.

\item[(ii)] If $m\in[0,d]$, then $\rho\in W^{\frac{m-d}2-\epsilon,p}(D)$ almost
surely for any $\epsilon>0$ and $p\in(1,\infty)$.
\end{itemize}
\end{lemma}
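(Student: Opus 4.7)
The plan is to deduce the regularity of $\rho$ from that of the fractional Gaussian field $f_m=(-\Delta)^{-m/4}\dot W$ defined in \eqref{eq:FGF}, by comparing the two covariance operators at the principal-symbol level. Since $Q_\rho$ is a classical pseudo-differential operator of order $-m$ with principal symbol $\phi(x)|\xi|^{-m}$ and $Q_{f_m}$ has symbol $|\xi|^{-m}$, their Schwartz kernels share the same leading singularity along the diagonal up to the smooth multiplier $\phi$, so any local regularity statement for $f_m$ transfers to a local statement for $\rho$ on $\mathrm{supp}\,\phi\subset D$. I would first make this explicit by writing the symbol of $Q_\rho$ as $\phi(x)|\xi|^{-m}+r(x,\xi)$ with $r$ of order $-m-1$ and using the Fourier identity $\mathcal F^{-1}(|\xi|^{-m})(z)=c_{m,d}|z|^{m-d}$ for $0<m<d$, with the appropriate distributional regularization when $m\in[d,d+2)$, to obtain
\[
\mathcal K_\rho(x,y)=c_{m,d}\phi(x)|x-y|^{m-d}+R(x,y),
\]
where $R$ is Hölder continuous of strictly higher order near $\{x=y\}$. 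In particular, for $m\in(d,d+2)$ this yields the variogram bound $\mathbb E|\rho(x)-\rho(y)|^2\lesssim|x-y|^{m-d}$.

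For part (i), I would then invoke Kolmogorov's continuity theorem. Gaussianity of $\rho$ combined with the above variogram bound gives $\mathbb E|\rho(x)-\rho(y)|^{2k}\le C_k|x-y|^{k(m-d)}$ for every positive integer $k$, and Kolmogorov's criterion produces a modification of $\rho$ that is almost surely Hölder continuous with any exponent $\alpha<(k(m-d)-d)/(2k)$. Letting $k\to\infty$ sweeps out the full range $\alpha\in(0,(m-d)/2)$. The fact that this continuous modification coincides almost surely with the distribution $\rho$ from Assumption \ref{as:rho} follows by pairing with a countable dense family of test functions in $\mathcal D$ and using the separability of Gaussian processes with continuous covariance.

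For part (ii), I would use a white-noise representation. I factor $Q_\rho=AA^*$, where $A$ is a classical pseudo-differential operator of order $-m/2$ with principal symbol $\phi(x)^{1/2}|\xi|^{-m/2}$; such a factorization is built by a standard parametrix construction. Then $\rho=A\dot W$ in distribution. Since white noise belongs to $W^{-d/2-\epsilon,p}_{\mathrm{loc}}(\mathbb R^d)$ almost surely for every $\epsilon>0$ and $p\in(1,\infty)$ (a consequence of Gaussian tail bounds together with the Besov characterization of $\dot W$), and since $A$ is smoothing of order $m/2$ with a compactly supported amplitude, I obtain $\rho\in W^{(m-d)/2-\epsilon,p}_{\mathrm{loc}}(\mathbb R^d)$ almost surely. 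The compact support of $\rho$ inside $D$ together with the locally Lipschitz boundary of $D$ then upgrades this to the global statement $\rho\in W^{(m-d)/2-\epsilon,p}(D)$.

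The main obstacle will be handling the case in which $\phi$ vanishes on parts of $D$, since then $\phi^{1/2}$ is only Hölder and the factorization $A$ cannot be extracted as a classical pseudo-differential operator in a direct fashion. The standard remedy is to replace $\phi$ by a strictly positive smooth majorant $\tilde\phi\ge\phi$, build $\tilde A$ cleanly from $\tilde\phi^{1/2}|\xi|^{-m/2}$, and treat $Q_\rho-\tilde A\tilde A^*$ as a self-adjoint operator whose Schwartz kernel is strictly more regular, so its Gaussian contribution does not worsen the regularity class. A secondary subtlety is the uniform control of the lower-order symbol remainder $r(x,\xi)$ when extracting the diagonal asymptotics of $\mathcal K_\rho$, which is dispatched by the classical elliptic expansion of the kernel of a pseudo-differential operator away from and across its diagonal singularity.
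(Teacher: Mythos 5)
Your overall strategy---transferring regularity from the fractional Gaussian field $f_m$ of \eqref{eq:FGF} to $\rho$ at the level of the covariance operator---is the same one the paper gestures at; note that the paper does not prove Lemma \ref{le1} in-house but delegates it to \cite{LW}, so you are reconstructing that argument. Your part (i) is essentially sound: the kernel expansion $\mathcal K_\rho(x,y)=C_1(m,d)\phi(x)|x-y|^{m-d}+R(x,y)$, Gaussian moment equivalence, and Kolmogorov's criterion with $k\to\infty$ is the standard route. The only bookkeeping you are missing there is minor: the variogram bound needs the sign $C_1(m,d)\le 0$ (true, since $\Gamma(\frac{d-m}{2})<0$ for $m\in(d,d+2)$) and a symmetrized second-difference estimate on the non-symmetric remainder $R$, and pointwise values $\rho(x)$ must first be given meaning (e.g.\ by mollification, with uniform Kolmogorov bounds) before the criterion applies.

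The genuine gap is in part (ii), exactly at the step you flag and then resolve incorrectly. If $\tilde\phi\ge\phi$ with $\tilde\phi$ strictly positive (unavoidable when $\phi$ vanishes on part of $D$), then $Q_\rho-\tilde A\tilde A^{*}$ has principal symbol $\big(\phi(x)-\tilde\phi(x)\big)|\xi|^{-m}\not\equiv 0$: it is a pseudo-differential operator of the \emph{same} order $-m$, its Schwartz kernel has the same diagonal singularity as that of $Q_\rho$, and it is in no sense ``strictly more regular,'' so its Gaussian contribution cannot be dismissed on smoothness grounds. What the majorant can give is a comparison argument: if $\tilde A\tilde A^{*}-Q_\rho$ were a genuine covariance operator, you could realize $\tilde\rho=\rho+\rho'$ with $\rho'$ independent, note that $\tilde\rho$ has the law of $\tilde A\dot W\in W^{(m-d)/2-\epsilon,p}_{loc}$, and recover the regularity of $\rho$ from that of $\tilde\rho$ via $2\rho=(\rho+\rho')+(\rho-\rho')$ together with the fact that $\rho-\rho'$ has the same law as $\rho+\rho'$. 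But operator nonnegativity of $\tilde A\tilde A^{*}-Q_\rho$ does not follow from nonnegativity of its principal symbol: sharp G{\aa}rding gives positivity only modulo order $-(m+1)$, so one must iterate the symbol correction and absorb a smoothing remainder into an added smooth independent noise---none of which appears in your proposal. The cleaner standard proof, and the one behind the citation chain (\cite{LW}, \cite{CHL}, \cite{LPS08}), bypasses factorization entirely: from the symbol bound $|\sigma(x,\xi)|\lesssim(1+|\xi|)^{-m}$ one derives $\mathbb E|\hat\rho(\xi)|^2\lesssim(1+|\xi|)^{-m}$, Gaussian hypercontractivity upgrades this to all moments, and Sobolev--Besov embedding yields $\rho\in W^{\frac{m-d}{2}-\epsilon,p}(D)$ almost surely, with no square root of $\phi$ and no positivity issue.
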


Moreover, the kernels for both $\rho$ in Definition \ref{def1} and $f_m$ defined
in \eqref{eq:FGF} are isotropic and have the same order. The following result
gives the explicit expressions of the kernels for $f_m$. The proof can be
found in \cite{LSSW16}. 

\begin{lemma}\label{le2}
Let $f_m$ be a fractional Gaussian field defined by \eqref{eq:FGF}. Denote
$H=\frac{m-d}2$. The kernel function $\mathcal{K}_{f_m}$ of $f_m$ has the
following form:

\begin{itemize}
\item[(i)] If $m\in(0,\infty)$ and $H$ is not a nonnegative integer, then
\[
\mathcal{K}_{f_m}(x,y)=C_1(m,d)|x-y|^{2H},
\]
where  $C_1(m,d)=2^{-m}\pi^{-\frac
d2}\Gamma(\frac{d-m}2)/\Gamma(\frac m2)$ with $\Gamma(\cdot)$ being the Gamma
function.

\item[(ii)] If $m\in(0,\infty)$ and $H$ is a nonnegative integer, then
\[
\mathcal{K}_{f_m}(x,y)=C_2(m,d)|x-y|^{2H}\ln|x-y|,
\]
where 
$C_2(m,d)=(-1)^{H+1}2^{-m+1}\pi^{-\frac d2}/(H!\Gamma(\frac m2)).$

\item[(iii)] If $m=0$, then 
\[
\mathcal{K}_{f_m}(x,y)=\delta(x-y),
\]
where $\delta(\cdot)$ is the Dirac delta function centered at $0$.
\end{itemize}
\end{lemma}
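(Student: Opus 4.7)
The plan is to compute the covariance kernel by reducing it to the inverse Fourier transform of $|\xi|^{-m}$ in the sense of tempered distributions, and then to evaluate this transform by invoking the classical Riesz potential identities together with analytic continuation in $m$. Starting from \eqref{eq:FGF}, for any $\psi_1,\psi_2\in\mathcal D$ the isometry property of white noise gives
\[
\mathbb E\!\left[\langle f_m,\psi_1\rangle\langle f_m,\psi_2\rangle\right]
=\langle(-\Delta)^{-m/4}\psi_1,(-\Delta)^{-m/4}\psi_2\rangle_{L^2}
=\langle(-\Delta)^{-m/2}\psi_1,\psi_2\rangle_{L^2},
\]
so the covariance operator is $Q_{f_m}=(-\Delta)^{-m/2}$ and its Schwartz kernel is $\mathcal K_{f_m}(x,y)=\mathcal F^{-1}\bigl[|\xi|^{-m}\bigr](x-y)$. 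This expression is automatically radial, which already explains the isotropy asserted in the statement; computing it in closed form for the admissible values of $m$ is the core of the proof.

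For case (i) in the easy range $m\in(0,d)$, I would invoke the classical identity $\mathcal F\bigl[|x|^{-\alpha}\bigr](\xi)=\frac{2^{d-\alpha}\pi^{d/2}\Gamma((d-\alpha)/2)}{\Gamma(\alpha/2)}|\xi|^{\alpha-d}$ valid in $\mathcal S'$ for $0<\alpha<d$, specialize it to $\alpha=d-m$, and invert, obtaining the stated formula with $C_1(m,d)=2^{-m}\pi^{-d/2}\Gamma((d-m)/2)/\Gamma(m/2)$. To cover the remainder of case (i), namely $m>d$ with $H=(m-d)/2$ not a nonnegative integer, I would appeal to the meromorphic analytic continuation in $m$ of the families of homogeneous tempered distributions $|x|^{m-d}$ and $|\xi|^{-m}$ (after Hadamard regularization near $\xi=0$ when $m\ge d$), which preserves the identity wherever both sides remain regular. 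The only singular locus of the right-hand side is where $\Gamma((d-m)/2)$ blows up, i.e.\ at $m=d+2k$ with $k\in\{0,1,2,\ldots\}$, precisely the integer values of $H$ excluded from case (i).

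Case (ii) is the principal technical obstacle: at the poles $m=d+2H$ with $H$ a nonnegative integer the formula of case (i) diverges and the finite part of the inverse Fourier transform acquires a logarithmic correction. I would extract the coefficient by writing $m=d+2H+\epsilon$, expanding $\Gamma((d-m)/2)=\Gamma(-H-\epsilon/2)\sim 2(-1)^{H+1}/(H!\,\epsilon)$ near the pole and $|x|^{m-d}=|x|^{2H}(1+\epsilon\ln|x|+O(\epsilon^2))$, and matching $\epsilon^{-1}$ coefficients on the two sides of the meromorphic identity; the polynomial counterterms cancel against the Hadamard renormalization, and the surviving coefficient of $|x|^{2H}\ln|x|$ is exactly $C_2(m,d)=(-1)^{H+1}2^{-m+1}\pi^{-d/2}/(H!\,\Gamma(m/2))$. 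Case (iii) is immediate from the definition, since $f_0=\dot W$ is white noise with covariance kernel $\delta(x-y)$. The delicate bookkeeping of the residues and of the regularization at integer $H$ is really the only nontrivial step; everything else is classical harmonic analysis of Riesz potentials, and for a fully self-contained treatment one may simply follow the derivation in \cite{LSSW16}.
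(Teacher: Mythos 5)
The paper offers no proof of this lemma at all: it is quoted verbatim with the remark that the proof can be found in \cite{LSSW16}, and the derivation in that survey is precisely the route you take --- identify $Q_{f_m}=(-\Delta)^{-m/2}$ via the white-noise isometry, compute the kernel as the inverse Fourier transform of $|\xi|^{-m}$ through the classical Riesz/Gelfand--Shilov identities for homogeneous distributions, and obtain the logarithmic case by extracting the residue of $\Gamma\big(\frac{d-m}2\big)$ at $m=d+2H$. Your constants check out (for $d=2$, $m=2$ your $C_2$ gives $-\frac1{2\pi}\ln|x-y|$, the Green kernel of $-\Delta$ in the plane; for $d=3$, $m=2$ your $C_1$ gives $\frac1{4\pi}|x-y|^{-1}$), and the pole expansion $\Gamma(-H-\epsilon/2)\sim 2(-1)^{H+1}/(H!\,\epsilon)$ is correct. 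One point you should state explicitly rather than bury in the phrase ``Hadamard regularization'': for $m\ge d$ the isometry displayed at the start of your argument fails as written, because $(-\Delta)^{-m/4}\psi\notin L^2$ for generic $\psi\in\mathcal D$ (its Fourier transform $|\xi|^{-m/2}\hat\psi$ is not square-integrable near $\xi=0$ unless $\hat\psi$ vanishes to sufficient order there). In \cite{LSSW16} the field $f_m$ with $H\ge0$ is defined only modulo polynomials of degree at most $\lfloor H\rfloor$, and the kernel formulas in (i)--(ii) are to be paired against test functions whose moments of order up to $\lfloor H\rfloor$ vanish; it is exactly this restriction that annihilates the $\epsilon^{-1}$ term proportional to the polynomial $|x-y|^{2H}$ in your matched expansion and makes the coefficient of $|x-y|^{2H}\ln|x-y|$ well defined. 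With that caveat made precise, your sketch is a faithful reconstruction of the cited proof rather than a genuinely different argument.
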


We conclude this section by giving the kernel function of a microlocally
isotropic Gaussian random field in Definition \ref{def1}, which has the
form
\[
\mathcal{K}_\rho(x,y)=\phi(x)\mathcal{K}_{f_m}(x,y)+h(x,y),
\]
where $\phi\mathcal{K}_{f_m}$ is the leading term with strength
$\phi$ and $h$ is a smooth residual (cf. \cite{LPS08}).

\section{The direct scattering problem}
\label{sec:3}
According to Lemma \ref{le1} with $d=2$, if $m\in(2,4)$, the random potential 
$\rho$ is a H\"{o}lder continuous function almost surely and has enough
regularity such that the scattering problem \eqref{a1} and \eqref{a6} is
well-posed in the traditional sense (cf. \cite{BLZ}). However, if $m\in[0,2]$,
then the random potential $\rho\in W^{\frac{m-2}2-\epsilon,p}(D)$ is
a distribution, and the elastic wave equation \eqref{a1} should be considered in
the distribution sense instead. 

In this section, we study the well-posedness of the scattering problem \eqref{a1} and \eqref{a6} 
with $m\in[0,2]$ (cf. Assumption \ref{as:rho}) by considering the equivalent Lippmann--Schwinger
integral equation. 

In the sequel, we denote by $\bm{X}:=X^2=\{\bm{g}=(g_1,g_2)^\top:g_j\in
X,~\forall\, j=1,2\}$ the Cartesian product vector space of $X$, and use
the notation $\bm{W}^{r,p}:=(W^{r,p}(\mathbb{R}^2))^2$ and
$\bm{H}^r:=\bm{W}^{r,2}$ for simplicity. The notation $a\lesssim b$ or $a
\gtrsim b$ stands for $a\leq C b$ or $a\geq C b$, where $C$ is a positive
constant whose value is not required but
should be clear from the context. 

\subsection{The Lippmann--Schwinger integral equation}

Based on the Green tensor $\bm G$ given in \eqref{a4} and given a source point
$y\in\mathbb R^2$, the Lippmann--Schwinger integral equation takes the form
\begin{eqnarray}\label{eq:LS}
\bm{u}(x,y)+\int_D\bm{G}(x,z,\omega)\rho(z)\bm{u}(z,y)dz=\bm{G}(x,y,\omega)\bm{a},\quad
x\in\mathbb R^2, \quad x\neq y. 
\end{eqnarray}
For a fixed $y\in\mathbb{R}^2$, define two scattering operators $H_\omega$ and
$K_\omega$ by 
\[
(H_\omega \bm u)(x):=[H_\omega\bm u(\cdot,y)](x)=\int_{\mathbb{R}^2}\bm{G}(x,z,\omega)\bm{u}(z,y)dz
\]
and
\begin{align}\label{eq:K}
(K_\omega \bm{u})(x):=[K_\omega\bm u(\cdot,y)](x)=\int_{\mathbb{R}^2}\bm{G}(x,z,\omega)\rho(z)\bm{u}(z,y)dz,
\end{align}
which have the following properties (cf. \cite[Lemma 4.2]{LW2}).

\begin{lemma}\label{lm:operators}
Let $\rho$ satisfy Assumption \ref{as:rho},  $\mathcal{O}\subset\mathbb{R}^2$ be a bounded set, and $\mathcal{V}\subset\mathbb{R}^2$ be a bounded open set with a locally Lipschitz boundary.
\begin{itemize}
\item[(i)] The operator
$H_\omega:\bm{H}_0^{-\beta}(\mathcal{O})\to\bm{H}^\beta(\mathcal{V})$ is bounded
for any $\beta\in(0,1]$. 

\item[(ii)] The operator
$H_\omega:\bm{W}_0^{-\gamma,p}(\mathcal{O})\to\bm{W}^{\gamma,q}(\mathcal{V})$ is compact for any $q\in(2,\infty)$, $\gamma\in\big(0,\frac2q\big)$ and $p$ satisfying $\frac1p+\frac1q=1$.

\item[(iii)] The operator $K_\omega:\bm{W}^{\gamma,q}(\mathcal{V})\to\bm{W}^{\gamma,q}(\mathcal{V})$ is compact for any $q\in\big(2,\frac{4}{2-m}\big)$ and $\gamma\in\big(\frac{2-m}2,\frac2q\big)$.

\end{itemize}
\end{lemma}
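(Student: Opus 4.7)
The plan is to unify the three claims by identifying $H_\omega$ as the outgoing resolvent of the Navier operator $\Delta^* + \omega^2$ and then factoring $K_\omega = H_\omega \circ M_\rho$ through a multiplication operator $M_\rho u := \rho u$. First I would rewrite $H_\omega$ using the explicit form (\ref{a4}) of the Green tensor. Invoking the small-argument asymptotics $H_0^{(1)}(z) \sim \frac{2\mathrm{i}}{\pi}\log z$ as $z\to 0$, one observes that the leading logarithmic singularities of $\Phi(\cdot,\cdot,\kappa_{\rm s})$ and $\Phi(\cdot,\cdot,\kappa_{\rm p})$ cancel in the difference appearing in (\ref{a4}), so that the second-derivative contribution $\nabla_x\nabla_x^\top[\Phi(\kappa_{\rm s})-\Phi(\kappa_{\rm p})]$ remains only weakly singular (at most a logarithm). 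Consequently $\bm G$ is the Schwartz kernel of a classical pseudo-differential operator of order $-2$, modulo a smooth outgoing remainder produced by the Kupradze--Sommerfeld radiation condition.

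For (i), I would exploit that $u = H_\omega f$ is the unique outgoing solution of $\Delta^* u + \omega^2 u = -f$ in $\mathbb R^2$. Interior elliptic regularity for the strongly elliptic operator $\Delta^*$, combined with the limiting absorption principle for $(\Delta^*+\omega^2)^{-1}$ at real frequency $\omega$, yields $u\in\bm{H}^{2-\beta}_{\rm loc}(\mathbb R^2)$ whenever $f\in\bm{H}_0^{-\beta}(\mathcal O)$, with a norm estimate depending only on $\omega$, $\mathcal O$, and $\mathcal V$. Restriction to $\mathcal V$ and the embedding $\bm{H}^{2-\beta}(\mathcal V)\hookrightarrow\bm{H}^{\beta}(\mathcal V)$ (valid since $2-\beta\ge\beta$ for $\beta\in(0,1]$) give boundedness; I would treat the endpoint $\beta=1$ first, and obtain the range $\beta\in(0,1)$ by complex interpolation.

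For (ii), the same resolvent argument extends to the $L^p$-scale via Calder\'on--Zygmund / Mikhlin theory applied to the Navier resolvent symbol, which gives boundedness $H_\omega:\bm{W}_0^{-\gamma,p}(\mathcal O)\to\bm{W}^{2-\gamma,p}(\mathcal V)$. The Rellich--Kondrachov theorem on the bounded, locally Lipschitz domain $\mathcal V$ then supplies a compact embedding $\bm{W}^{2-\gamma,p}(\mathcal V)\hookrightarrow\bm{W}^{\gamma,q}(\mathcal V)$ for $q\in(2,\infty)$, $\gamma\in(0,2/q)$, and $1/p+1/q=1$, upgrading boundedness to compactness. For (iii), I would factor $K_\omega=H_\omega\circ M_\rho$. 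Lemma \ref{le1}(ii) ensures $\rho\in\bm{W}^{(m-2)/2-\epsilon,p}(D)$ almost surely for every $\epsilon>0$ and $p\in(1,\infty)$; a Sobolev product estimate, phrased dually via the pairing $\langle \rho u,\phi\rangle=\langle\rho,u\phi\rangle$, then shows $M_\rho:\bm{W}^{\gamma,q}(\mathcal V)\to\bm{W}_0^{-\gamma,p}(D)$ is bounded whenever $\gamma>(2-m)/2$, because the product $u\phi$ of two $\bm{W}^{\gamma,q}$ functions falls into $\bm{W}^{(2-m)/2+\epsilon,q}$ for sufficiently small $\epsilon$. Composing with (ii) applied to $\mathcal O=D$ delivers the compactness of $K_\omega$.

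The principal technical obstacle is the multiplication estimate in (iii): since $\rho$ is a genuine distribution of negative order, its pointwise product with $\bm{W}^{\gamma,q}$ functions is undefined a priori and must be interpreted through duality and a paraproduct decomposition. The window $\gamma\in((2-m)/2,\,2/q)$ is essentially sharp, since the left endpoint $(2-m)/2$ is the minimal regularity of $u$ needed for $\rho u$ to land in $\bm{W}^{-\gamma,p}$, while the right endpoint $2/q$ is the maximal regularity compatible with the Rellich compact embedding invoked in (ii). The compatibility condition $(2-m)/2<2/q$ translates exactly into the upper bound $q<4/(2-m)$ in the statement.
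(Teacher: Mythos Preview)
The paper does not supply its own proof of this lemma; it cites \cite[Lemma~4.2]{LW2}. Your strategy---interpret $H_\omega$ as the outgoing Navier resolvent, use the weak (logarithmic) singularity of $\bm G$ to identify it with an elliptic pseudo-differential operator of order $-2$, apply interior elliptic regularity plus Rellich--Kondrachov for (i)--(ii), and factor $K_\omega=H_\omega\circ M_\rho$ with a duality/product argument for (iii)---is the standard route and is almost certainly what \cite{LW2} carries out in detail. Two technical points in your write-up need correction, however.

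First, in (ii) you invoke ``Calder\'on--Zygmund / Mikhlin theory applied to the Navier resolvent symbol'' to obtain $H_\omega:\bm W_0^{-\gamma,p}(\mathcal O)\to\bm W^{2-\gamma,p}(\mathcal V)$. Mikhlin's theorem does \emph{not} apply here: at real frequency the symbol $\widehat{\bm G}(\xi)$ (cf.\ (\ref{x1})) has genuine poles on the spheres $|\xi|=\kappa_{\rm s}$ and $|\xi|=\kappa_{\rm p}$, so the Mikhlin derivative bounds fail. What rescues the argument is what you already used in (i): interior elliptic regularity for the principal part $\Delta^*$, with $\omega^2 u$ absorbed as a lower-order term. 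One first needs an a~priori bound $u=H_\omega f\in L^p_{\rm loc}$ (or even $L^\infty_{\rm loc}$), which follows from the local integrability of the logarithmic kernel and the compact support of $f$; then elliptic regularity yields the full two-derivative gain on bounded sets. Phrase (ii) through this mechanism rather than through multiplier theory.

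Second, in (iii) you assert that ``the product $u\phi$ of two $\bm W^{\gamma,q}$ functions falls into $\bm W^{(2-m)/2+\epsilon,q}$.'' This is not what the fractional Leibniz (Kato--Ponce) inequality gives: for $u,\phi\in\bm W^{\gamma,q}$ with $\gamma<2/q$ one obtains $u\cdot\phi\in W^{\gamma,q/2}$ (the integrability exponent halves), not $W^{\gamma,q}$. You must then choose the integrability exponent for $\rho$ accordingly, i.e., pair with $\rho\in W^{-s_0,(q/2)'}$ for some $s_0\in\big(\tfrac{2-m}{2},\gamma\big)$, which Lemma~\ref{le1}(ii) permits since $\rho\in W^{(m-2)/2-\epsilon,p}$ for \emph{every} $p\in(1,\infty)$. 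The same exponent bookkeeping is written out in the paper's own proof of Lemma~\ref{lm:K}, and you can mirror it. With these two fixes your outline is complete.
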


The following result gives the well-posedness of the Lippmann--Schwinger
integral equation \eqref{eq:LS}. 

\begin{theorem}\label{tm:LS}
Let $\rho$ satisfy Assumption \ref{as:rho}. Then the Lippmann--Schwinger integral equation
\eqref{eq:LS} admits a unique solution $\bm{u}\in\bm{W}^{\gamma,q}_{loc}$ almost surely with $q\in\big(2,\frac{2}{2-m}\big)$ and
$\gamma\in\big(\frac{2-m}2,\frac1q\big)$.
\end{theorem}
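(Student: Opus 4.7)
\smallskip

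The plan is to recast equation~\eqref{eq:LS} as an operator equation on a bounded domain via the Fredholm alternative, and to handle uniqueness by extending a candidate null solution to all of $\mathbb{R}^2$ and invoking the Kupradze--Sommerfeld radiation condition together with unique continuation. Fix a bounded open set $\mathcal{V}\subset\mathbb{R}^2$ with a locally Lipschitz boundary such that $\overline{D}\subset\mathcal{V}$ and $y\in\mathcal{V}$. Since $\rho$ is supported in $D$ almost surely, the integral operator $K_\omega$ in~\eqref{eq:K} depends only on values of $\bm u$ inside $D\subset\mathcal{V}$, so~\eqref{eq:LS} restricted to $x\in\mathcal{V}$ reads
\begin{equation*}
(I+K_\omega)\bm{u}(\cdot,y)=\bm{G}(\cdot,y,\omega)\bm{a}\quad\text{in }\bm{W}^{\gamma,q}(\mathcal{V}).
\end{equation*}
The first step is to check that the right-hand side lies in $\bm{W}^{\gamma,q}(\mathcal{V})$ for the stated exponent range. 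Using the explicit form~\eqref{a4} and the known short-distance behavior of the Hankel function (logarithmic singularity for $\Phi$ and an effectively mild singularity for the difference $\Phi(\cdot,\kappa_{\rm s})-\Phi(\cdot,\kappa_{\rm p})$), one verifies by a direct computation of fractional Sobolev norms that $\bm{G}(\cdot,y,\omega)\bm{a}\in\bm{W}^{\gamma,q}(\mathcal{V})$ precisely when $\gamma q<1$, which is what the constraint $\gamma<1/q$ in the theorem encodes. By Lemma~\ref{lm:operators}(iii), $K_\omega$ is compact on $\bm{W}^{\gamma,q}(\mathcal{V})$ for the chosen $(\gamma,q)$, so the Fredholm alternative applies and existence and uniqueness of a solution in $\bm{W}^{\gamma,q}(\mathcal{V})$ reduces to injectivity of $I+K_\omega$.

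For injectivity, suppose $\bm{v}\in\bm{W}^{\gamma,q}(\mathcal{V})$ satisfies $(I+K_\omega)\bm{v}=\bm{0}$. Extend $\bm{v}$ to $\mathbb{R}^2$ by setting
\begin{equation*}
\bm{v}(x):=-\int_D \bm{G}(x,z,\omega)\rho(z)\bm{v}(z)\,dz,\qquad x\in\mathbb{R}^2\setminus\mathcal{V},
\end{equation*}
which is consistent with the equation on $\mathcal{V}$. Because $\rho\bm v$ is a compactly supported distribution on $D$ (the product makes sense in a negative-order Sobolev space thanks to $\gamma>(2-m)/2$, cf.\ the argument behind Lemma~\ref{lm:operators}(iii)), and $\bm{G}(\cdot,z,\omega)$ is a fundamental solution of the Navier operator with an outgoing radiation pattern, the extended $\bm{v}$ is a distributional solution of the homogeneous Navier equation
\begin{equation*}
\mu\Delta\bm{v}+(\lambda+\mu)\nabla\nabla\cdot\bm{v}+\omega^2\bm{v}-\rho\bm{v}=\bm{0}\quad\text{in }\mathbb{R}^2,
\end{equation*}
and satisfies the Kupradze--Sommerfeld radiation condition~\eqref{a6} inherited from $\bm{G}$.

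The injectivity then follows in two stages. First, apply the Rellich/Kupradze lemma in the exterior $\mathbb{R}^2\setminus\overline{D}$, where $\rho\equiv 0$: the radiating solution $\bm{v}$ decomposes into compressional and shear parts, each satisfying a Helmholtz equation with outgoing asymptotics, and Rellich's lemma forces $\bm{v}\equiv\bm{0}$ outside $\overline{D}$. Second, use unique continuation across $\partial D$ for the Navier system with the distributional potential $\rho$ to propagate the vanishing into $D$; since $\rho\in W^{(m-2)/2-\epsilon,p}(D)$ by Lemma~\ref{le1} and $\bm{v}\in\bm{W}^{\gamma,q}$, the product $\rho\bm{v}$ is a well-defined distribution and the weak unique continuation principle for strongly elliptic systems with sufficiently mild potentials applies. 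This yields $\bm{v}\equiv\bm{0}$ in $\mathbb{R}^2$, so $I+K_\omega$ is injective, the Fredholm alternative delivers a unique solution on $\mathcal{V}$, and the integral representation~\eqref{eq:LS} extends the solution to the whole space with $\bm{u}\in\bm{W}^{\gamma,q}_{loc}$.

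The main obstacle is the unique continuation step: standard elastic unique continuation results assume at least bounded or $L^p$ potentials, whereas here $\rho$ is only a distribution of negative order, so one has to verify that the product $\rho\bm{v}$ is controlled in the function space in which $\bm v$ lives, and that the Carleman-type estimate still closes. The compatibility condition $m>1$ together with $\gamma>(2-m)/2$ is precisely what makes this pairing well defined and transfers the Fredholm machinery, already developed in the acoustic setting of~\cite{LW2}, to the elastic system~\eqref{a1}.
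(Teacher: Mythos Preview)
Your approach matches the paper's: recast \eqref{eq:LS} as $(I+K_\omega)\bm u=\bm G\bm a$ on a bounded domain, invoke compactness of $K_\omega$ from Lemma~\ref{lm:operators}(iii), check that the right-hand side lies in the target space, and reduce to injectivity via unique continuation. The paper simply cites \cite{LW2} for the injectivity step, whereas you spell out the Rellich-plus-unique-continuation argument explicitly; that is essentially what \cite{LW2} supplies.

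One inaccuracy: you attribute the constraint $\gamma<1/q$ to membership of $\bm G(\cdot,y,\omega)\bm a$ in $\bm W^{\gamma,q}(\mathcal V)$. In fact the Green tensor has only a logarithmic singularity in two dimensions (the $\nabla\nabla^\top$ term is bounded near the diagonal because the leading logarithmic parts of $\Phi(\cdot,\kappa_{\rm s})$ and $\Phi(\cdot,\kappa_{\rm p})$ cancel), so $\bm G(\cdot,y,\omega)\in (W^{1,p'})^{2\times2}$ for every $p'<2$, and by the Kondrachov embedding it lies in $\bm W^{\gamma,q}$ whenever $\gamma<2/q$---a strictly wider range than you claim. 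The paper takes this embedding route explicitly. The sharper restriction $\gamma<1/q$ (together with $q<\frac{2}{2-m}$) comes instead from the unique continuation principle, as the paper notes in the final sentence of its proof; this is exactly the step you flag as the ``main obstacle,'' but you do not identify it as the source of the parameter constraint. This does not break your argument---the right-hand side is still in the space for the stated exponents---but the explanation of \emph{why} the exponents are so constrained is misplaced.
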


\begin{proof}
Let $\mathcal{V}\subset\mathbb{R}^2$ be any bounded open set with a locally
Lipschitz boundary. By the definition of the operator $K_\omega$, the 
Lippmann--Schwinger integral equation \eqref{eq:LS} can be written as
\begin{align}\label{eq:LSoperators}
[(I+K_\omega)\bm{u}(\cdot,y)](x)=\bm{G}(x,y,\omega)\bm{a},\quad x\in\mathbb{R}^2,
\end{align}
where $y\in\mathbb{R}^2$ is fixed and $I$ is the identity operator. It follows
from  Lemma \ref{lm:operators} that the operator
$I+K_\omega:\bm{W}^{\gamma,q}(\mathcal{V})\to\bm{W}^{\gamma,q}(\mathcal{ V})$ is
Fredholm. Moreover, it is shown in \cite[Lemma 4.1]{LHL} that
$\bm{G}(\cdot,y,\omega)\in (W^{1,p'}(\mathcal{V}))^{2\times 2}$ with
$p'\in(1,2)$. Choosing $p'=2-\epsilon$ with $\epsilon>0$
being sufficiently small, we obtain from  the Kondrachov compact embedding
theorem that the embedding
$\bm{W}^{1,p'}(\mathcal{V})\hookrightarrow\bm{W}^{\gamma,q}(\mathcal{V})$ is
compact, which indicates that the right-hand side of \eqref{eq:LSoperators}
satisfies $\bm{G}(\cdot,y,\omega)\bm{a}\in\bm{W}^{\gamma,q}(\mathcal{V}).$

By the Fredholm alternative theorem, the 
Lippmann--Schwinger integral equation \eqref{eq:LSoperators} has a
unique solution $\bm{u}\in\bm{W}^{\gamma,q}(\mathcal{V})$ if 
\begin{align}\label{eq:LShomo}
(I+K_\omega)\bm{u}=0
\end{align}
has only the trivial solution $\bm{u}\equiv0$. This fact can be proved by the unique continuation principle (cf. \cite{LW2}), which restrict the parameters $q$ and $\gamma$ to intervals $\big(2,\frac{2}{2-m}\big)$ and
$\big(\frac{2-m}2,\frac1q\big)$, respectively.
\end{proof}

\subsection{Well-posedness}

Now we show the existence and uniqueness of the solution of \eqref{a1} in the
distribution sense by utilizing the Lippmann--Schwinger integral equation.

\begin{theorem}\label{tm:wellposed}
Let $\rho$ satisfy Assumption \ref{as:rho}. The elastic
wave equation \eqref{a1} together with the radiation condition \eqref{a6} is
well-defined in the distribution sense, and admits a unique solution
$\bm{u}\in\bm{W}^{\gamma,q}_{loc}$ almost surely with $q\in\big(2,\frac{2}{2-m}\big)$ and
$\gamma\in\big(\frac{2-m}2,\frac1q\big)$.
\end{theorem}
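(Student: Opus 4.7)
The plan is to identify distributional radiating solutions of (\ref{a1})--(\ref{a6}) with solutions of the Lippmann--Schwinger equation (\ref{eq:LS}) and then transfer the unique solvability of Theorem \ref{tm:LS} to the PDE formulation. Since both the PDE and the integral equation are to be read pathwise on the probability space, it suffices to work with a fixed realization for which the conclusions of Lemma \ref{le1} and Theorem \ref{tm:LS} hold.

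First I would verify that (\ref{a1}) is meaningful in $\mathcal{D}'(\mathbb{R}^2)$ for $\bm u\in\bm W^{\gamma,q}_{loc}$. By Lemma \ref{le1}(ii), $\rho\in W^{(m-2)/2-\epsilon,p}(D)$ for every $\epsilon>0$ and $p\in(1,\infty)$. Since $\gamma>(2-m)/2$ one can pick $\epsilon$ so small that $\gamma+(m-2)/2-\epsilon>0$, and the standard Sobolev multiplication theorem applied to $\rho$ and $\chi\bm u$, with a cut-off $\chi\in C_0^\infty(\mathbb{R}^2)$ equal to $1$ on a neighborhood of $D$, produces a well-defined distributional product $\rho\bm u$. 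Every term in (\ref{a1}) is then a distribution on $\mathbb{R}^2$. To show that the solution $\bm u$ furnished by Theorem \ref{tm:LS} satisfies (\ref{a1})--(\ref{a6}), I would apply the Navier operator $\Delta^*+\omega^2\bm I$ to both sides of (\ref{eq:LS}) and use the fundamental identity $(\Delta^*+\omega^2\bm I)\bm G(\cdot,y,\omega)=-\delta_y\bm I$, which follows directly from (\ref{a4}) together with the fact that $\Phi(\cdot,y,\kappa)$ is the outgoing fundamental solution of the Helmholtz equation with wavenumber $\kappa$. The scattered field then reads $\bm u^s(x,y)=-(K_\omega\bm u(\cdot,y))(x)$ with $K_\omega$ as in (\ref{eq:K}); substituting the decomposition (\ref{a4}) expresses $\bm u^s$ as a superposition over the compact set $D$ of outgoing Helmholtz fundamental solutions of wavenumbers $\kappa_{\rm p}$ and $\kappa_{\rm s}$, and the Helmholtz decomposition of $\bm u^s$ then inherits (\ref{a6}) termwise from the standard Sommerfeld asymptotics of $H_0^{(1)}$ and its derivatives.

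The main obstacle is the converse direction: any $\bm u\in\bm W^{\gamma,q}_{loc}$ that satisfies (\ref{a1}) distributionally together with (\ref{a6}) must obey (\ref{eq:LS}). I would argue by mollification, setting $\rho_\eta:=\rho*\chi_\eta$ and $\bm u_\eta:=\bm u*\chi_\eta$ for a standard mollifier $\chi_\eta$, so that the smoothed fields are classical and the commutator $(\rho*\chi_\eta)(\bm u*\chi_\eta)-(\rho\bm u)*\chi_\eta$ vanishes in the Sobolev norm delivered by the multiplication estimate above. For each $\eta>0$ one may apply Green's second identity in a large ball $B_R$ to $\bm u_\eta$ and $\bm G(\cdot,y,\omega)$, control the boundary contributions at $|x|=R$ using the outgoing behavior of $\bm G$ and the Kupradze--Sommerfeld condition (invoking a Rellich-type uniqueness argument for radiating solutions of the Navier equation in the exterior of $D$), and arrive at an integral identity of the form (\ref{eq:LS}) for $\bm u_\eta$. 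Passing to the limit $\eta\to 0$ using the multiplication estimate from the first step and the boundedness of $H_\omega$ provided by Lemma \ref{lm:operators}(i) yields (\ref{eq:LS}) for $\bm u$ itself. The unique solvability supplied by Theorem \ref{tm:LS} then delivers both existence and uniqueness of the distributional radiating solution in the class $\bm W^{\gamma,q}_{loc}$ almost surely, which completes the proof.
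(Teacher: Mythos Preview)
Your proposal is correct and follows essentially the same two-step strategy as the paper: show that the Lippmann--Schwinger solution from Theorem \ref{tm:LS} solves (\ref{a1})--(\ref{a6}) distributionally, and conversely that any distributional radiating solution satisfies (\ref{eq:LS}), so that Theorem \ref{tm:LS} delivers both existence and uniqueness. The paper's converse step is marginally simpler than your mollification-plus-commutator scheme: it keeps $\rho\bm u\in\bm W_0^{-\gamma,p}(D)$ fixed, computes $\int_{B_r}\bm G(x,z)[\Delta^*\bm\psi+\omega^2\bm\psi]dz$ for smooth $\bm\psi$ via Green's identity, extends to $\bm u$ by density, and lets $r\to\infty$ using the radiation condition directly (no Rellich-type argument is needed---the boundary terms vanish because both $\bm u^s$ and $\bm G$ are outgoing).
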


\begin{proof}
First we show the existence of the solution of \eqref{a1}. It suffices to 
show that the solution of the Lippmann--Schwinger integral equation
\eqref{eq:LS} is also a solution of \eqref{a1} in the distribution sense.

Suppose that $\bm{u}^*\in\boldsymbol{W}^{\gamma,q}_{loc}$ is the solution of
\eqref{eq:LS} and satisfies 
\[
\bm{u}^*(x,y)+\int_{\mathbb{R}^2}\bm{G}(x,z,\omega)\rho(z)\bm{u}^*(z,y)dz=\bm{G}(x,y,\omega)\bm{a},\quad x\in\mathbb{R}^2.
\]
Since the Green tensor $\bm{G}$ is the fundamental solution for the operator
$\Delta^*+\omega^2 $, we have 
\[
(\Delta^*+\omega^2)\bm{G}(\cdot,y,\omega)=-\delta_y\bm{I},
\]
where $\bm{I}$ is the $2\times 2$ identity matrix, $\delta_y$ is a distribution, i.e.,
$\delta_y\in\mathcal{D}'$. Hence, we get for any
$\boldsymbol{\psi}\in\boldsymbol{\mathcal{D}}$ that  
\[
\langle(\Delta^*+\omega^2)\bm{G}(\cdot,y,\omega),\boldsymbol{\psi}\rangle=-\langle\delta_y\bm{I},\boldsymbol{\psi}\rangle=-\boldsymbol{\psi}(y).
\]
For any $\boldsymbol{\psi}\in\boldsymbol{\mathcal{D}}$, a simple calculation
yields 
\begin{align*}
&\langle\mu\Delta\bm{u}^*+(\lambda+\mu)\nabla\nabla\cdot\bm{u}^*+\omega^2\bm{u}^*-\rho\bm{u}^*,\boldsymbol{\psi}\rangle\\
=&-\Big\langle\int_{\mathbb{R}^2}\left(\Delta^*+\omega^2\right)\bm{G}(\cdot,z
,\omega)\rho(z)\bm{u}^*(z,y)dz,\boldsymbol{\psi}\Big\rangle\\
&+\left\langle\left(\Delta^*+\omega^2\right)\bm{G}(\cdot,y,\omega)\bm{a},\boldsymbol{\psi}\right\rangle
-\langle \rho\bm{u}^*,\boldsymbol{\psi}\rangle\\
=&-\int_{\mathbb{R}^2}\left(\rho(z)\bm{u}^*(z,y)\right)^\top\left\langle\left(\Delta^*+\omega^2\right)\bm{G}(\cdot,z,\omega),\boldsymbol{\psi}\right\rangle dz\\
&+\bm{a}^\top\left\langle\left(\Delta^*+\omega^2\right)\bm{G}(\cdot,y,\omega),\boldsymbol{\psi}\right\rangle -\langle \rho\bm{u}^*,\boldsymbol{\psi}\rangle\\
=&\int_{\mathbb{R}^2}\left(\rho(z)\bm{u}^*(z,y)\right)^\top\boldsymbol{\psi}(z)dz-\bm{a}^\top\boldsymbol{\psi}(y)-\langle \rho\bm{u}^*,\boldsymbol{\psi}\rangle\\
=&-\langle \delta_y\bm{a},\boldsymbol{\psi}\rangle, 
\end{align*}
which implies that $\bm{u}^*\in\bm{W}^{\gamma,q}_{loc}$ is also a solution of
\eqref{a1} and shows the existence of the solution of \eqref{a1}
according to Theorem \ref{tm:LS}.

The uniqueness of the solution of \eqref{a1} is obtained by using the same
procedure as that of the Lippmann--Schwinger equation. It requires to show
that if $\bm{a}=0$, then any solution $\bm{u}$ of the homogeneous equation
\eqref{a1} in the distribution sense is also a solution of \eqref{eq:LShomo}
with $\bm{a}=0$, i.e., $\bm{u}\equiv0$. 
In fact, let $\bm{u}$ be any solution of \eqref{a1} with $\bm{a}=0$. Then $\bm{u}$ satisfies
\[
\Delta^*\bm{u}+\omega^2\bm{u}=\rho\bm{u}
\]
in the distribution sense, where $\rho\in W^{\frac{m-d}2-\epsilon,p'}_0(D)\hookrightarrow W_0^{-\gamma,\tilde
p}(D)$ for some $p'>1$ and $\tilde p=\frac p{2-p}$ with $p$ satisfying $\frac1p+\frac1q=1$, $\bm{u}\in\bm{W}^{\gamma,q}_{loc}$ and
$\rho\bm{u}\in\bm{W}_0^{-\gamma,p}(D)$ according to the proof of Lemma
\ref{lm:operators}.
Let $B_r$ be an open ball with radius $r$ large enough such that $D\subset
B_r$. It follows from the proof of Theorem \ref{tm:LS} that
$\bm{G}(\cdot,y,\omega)\in\left(W^{\gamma,q}(B_r)\right)^{2\times 2}$. Hence, we get
\begin{eqnarray}\label{b12}
\int_{B_r}\bm{G}(x,z,\omega)\left[\Delta^*\bm{u}(z)+\omega^2\bm{u}(z)\right]dz
=\int_{B_r}\bm{G}(x,z,\omega)\rho(z)\bm{u}(z)dz.
\end{eqnarray}

Denote by $T$ the operator that maps $\bm{u}$ to the left-hand side of
(\ref{b12}). For $\bm{\psi}\in \bm{\mathcal{D}}$, by the similar arguments
as those in the proof of \cite[Lemma 4.3]{LHL}, we obtain
\begin{eqnarray*}
(T\bm{\psi})(x)=-\bm{\psi}(x)+\int_{\partial
B_r}\left[\bm{G}(x,z,\omega)P\bm{\psi}(z)-P\bm{G}(x,z,\omega)\bm{\psi}
(z)\right]ds(z),
\end{eqnarray*}
where $P$ is the generalized stress vector on $\partial B_r$ defined by
$P\bm{\psi}:=\mu\frac{\partial\bm{\psi}}{\partial\nu}+(\lambda+\mu)(\nabla\cdot\bm{\psi})\nu$
with $\nu$ being the unit outward normal vector on the boundary $\partial B_r$.
Since $\bm{u}$ can be approximated by smooth functions, we have 
\begin{eqnarray*}
-\bm{u}(x)+\int_{\partial B_r}\Big[\bm{G}(x,z,\omega)P\bm{u}(z)-P\bm{G}(x,z,\omega)\bm{u}(z)\Big]ds(z)=\int_{B_r}\bm{G}(x,z,\omega)\rho(z)\bm{u}(z)dz.
\end{eqnarray*}
Letting $r\to\infty$ and using the radiation condition, we get
\begin{eqnarray*}
\bm{u}(x)=-\int_{\mathbb R^2}\bm{G}(x,z,\omega)\rho(z)\bm{u}(z)dz,
\end{eqnarray*}
which indicates that $\bm{u}$ is also a solution of the Lippmann--Schwinger
equation \eqref{eq:LS} with $\bm{a}=0$, and hence $\bm{u}\equiv0$ according to
Theorem \ref{tm:LS}.
\end{proof}

\section{The Born series} 
\label{sec:4}

The results in the previous section indicate that the scattering
problem \eqref{a1} and \eqref{a6}, which is interpreted in the distribution
sense, is equivalent to the Lippmann--Schwinger integral equation \eqref{eq:LS}.
In the sequel, we may just focus on the Lippmann--Schwinger integral equation
\eqref{eq:LS}.

To get an explicit expression of the solution, we consider the Born sequence of
the Lippmann--Schwinger integral equation
\begin{eqnarray}\label{c2}
\bm{u}_j(x,y)=[-K_\omega\bm{u}_{j-1}(\cdot,y)](x),\quad{ j\in\mathbb{N}},
\end{eqnarray}
where the leading term is 
\begin{eqnarray}\label{c3}
\bm{u}_0(x,y)=\bm{G}(x,y,\omega)\bm{a}.
\end{eqnarray}

The goal of this section is to prove that the Born
series $\sum_{j=0}^{\infty}\bm{u}_j$ converges to the solution $\bm{u}$ for
sufficiently large $\omega$. 

\subsection{Estimates of the scattering operators}

Before showing the convergence of the Born series, we first introduce a 
weighted space which is equipped with a weighted $L^p$-norm (cf. \cite{LLM}).
For any $\delta\in\mathbb{R}$, let
\[
L_\delta^p(\mathbb{R}^2):=\{f\in L_{loc}^1(\mathbb{R}^2): \|f\|_{L_\delta^p}<\infty\},
\]
which is denoted by $L_\delta^p$ for short and is equipped with the norm
\[
\|f\|_{L_\delta^p}:=\|(1+|\cdot|^2)^{\frac{\delta}2}f\|_{L^p}
=\Big(\int_{\mathbb{R}^2}(1+|x|^2)^{\frac{\delta
p}2}|f(x)|^pdx\Big)^{\frac1p}.
\]
Define the space
\[
H^{s,p}_{\delta}(\mathbb{R}^2):=\{f\in\mathcal{S}':(I-\Delta)^{\frac s2}f\in L_\delta^p\},
\]
which is denoted by $H_\delta^{s,p}$ for short if there is no ambiguity and
is equipped with the norm 
\[
\|f\|_{H^{s,p}_\delta}=\|(I-\Delta)^{\frac s2}f\|_{L_\delta^p}.
\]
Here $\mathcal{S}'$ denotes the dual space of $\mathcal{S}$ which is the space
of all rapidly decreasing functions. When $\delta=0$, the space $H_0^{s,p}$ can be identified with the classical Sobolev space $W^{s,p}$.
When $p=2$, for simplicity, denote 
$H^s_\delta:=H^{s,2}_\delta$. For any $s\in\mathbb{R}$ and $\delta\in[0,1]$, it
is easy to verify that 
\begin{align}\label{eq:Hsdelta}
\|f\|_{H^s_\delta}&=\|(I-\Delta)^{\frac
s2}f\|_{L^2_\delta}=\|(1+|\cdot|^2)^{\frac{\delta}2}(I-\Delta)^{\frac
s2}f\|_{L^2}\nonumber\\
&=\|(I-\Delta)^{\frac{\delta}2}(1+|\cdot|^2)^{\frac s2}\hat
f\|_{L^2}\\
&=\|(1+|\cdot|^2)^{\frac s2}\hat f\|_{H^\delta}\nonumber\\
&\gtrsim\|(1+|\cdot|^2)^{\frac s2}\hat
f\|_{L^2}=\|f\|_{H^s}\nonumber,
\end{align}
where we have used \cite[Theorem 13.5]{E11} to obtain the inequality.

Based on these weighted norms, the operators $H_\omega$ and $K_\omega$ can
be estimated as follows.

\begin{lemma}\label{lm:H}
Let $\mathcal V\subset\mathbb R^2$ be any bounded domain. For any $s\in(0,\frac12)$ and $\epsilon>0$, the following estimates hold:
\begin{align}\label{y1}
\|H_{\omega}\|_{\mathcal{L}(\bm H_1^{-s}, \bm H_{-1}^{s})} &\lesssim  
\omega^{-1+2s},\\\label{y2}
\|H_{\omega}\|_{\mathcal{L}(\bm H_1^{-s}, \bm L^{\infty}(\mathcal V))} &\lesssim 
\omega^{s+\epsilon}.
\end{align}
\end{lemma}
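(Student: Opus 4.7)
The plan is to reduce the vector operator $H_\omega$ to scalar Helmholtz resolvents associated with the shear and pressure wavenumbers, and then invoke weighted-space resolvent estimates in two dimensions. Taking the Fourier transform of the Green tensor \eqref{a4} in $x-y$ yields
\[
\widehat{\bm G}(\xi)=\frac{1}{\mu}\,\frac{\bm I}{|\xi|^2-\kappa_{\rm s}^2}\;-\;\frac{(c_{\rm s}^2-c_{\rm p}^2)\,\xi\xi^\top}{(|\xi|^2-\kappa_{\rm s}^2)(|\xi|^2-\kappa_{\rm p}^2)},
\]
where the crucial cancellation $\kappa_{\rm s}^2-\kappa_{\rm p}^2=(c_{\rm s}^2-c_{\rm p}^2)\omega^2$ absorbs the $\omega^{-2}$ prefactor in front of $\nabla\nabla^\top$. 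Thus $H_\omega$ splits into two pieces: a scalar Helmholtz resolvent with wavenumber $\kappa_{\rm s}$ tensored with $\bm I$, and one whose scalar kernel is a \emph{product} of two Helmholtz resolvents, whose extra $|\xi|^{-2}$ decay compensates for the differential factor $\xi\xi^\top$. This reduction is the common starting point for both estimates.

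For \eqref{y1}, I would invoke the two-dimensional weighted resolvent estimate
\[
\|\Phi(\cdot,\cdot,\kappa)\ast g\|_{H^{s}_{-1}}\;\lesssim\;\kappa^{-1+2s}\|g\|_{H^{-s}_{1}},\qquad s\in(0,\tfrac12),
\]
uniformly for large $\kappa$, which is the 2D Agmon--Kenig--Ruiz inequality (the scalar acoustic version is already used in \cite[Lemma 4.2]{LW2}), together with its variant for the product-of-two-resolvents symbol. Since $\kappa_{\rm s},\kappa_{\rm p}\asymp\omega$, applying these estimates component-wise to the decomposition above and summing the two contributions yields \eqref{y1}. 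The scalar estimate itself follows by interpolating between the limiting absorption principle $\|T_\kappa\|_{L^2_{1/2+}\to L^2_{-1/2-}}\lesssim\kappa^{-1}$ and a bound between unweighted $L^2$-based spaces with a full derivative gain.

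For \eqref{y2}, Sobolev embedding is unavailable, because $\bm H^{s}$ does not embed into $\bm L^\infty$ in two dimensions for $s<\tfrac12$. Instead, I would argue by duality at each fixed $x\in\mathcal V$:
\[
|(H_\omega\bm f)(x)|\;\leq\;\|\bm G(x,\cdot,\omega)\|_{\bm H^{s}_{-1}}\,\|\bm f\|_{\bm H^{-s}_{1}},
\]
and then show the uniform bound $\|\bm G(x,\cdot,\omega)\|_{\bm H^{s}_{-1}}\lesssim\omega^{s+\epsilon}$. I would split $\bm G(x,\cdot,\omega)$ with a smooth cutoff into a near-field piece supported near $z=x$ and a far-field piece. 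The near-field piece inherits only a logarithmic singularity from $H_0^{(1)}(\kappa|x-z|)$ and hence lies in $H^{s}$ for every $s<\tfrac12$ with a bound independent of $\omega$; the far-field piece is smooth and oscillatory at frequencies $\kappa_{\rm s},\kappa_{\rm p}\asymp\omega$, so each fractional derivative costs a factor of $\omega$, while the weight $(1+|\cdot|^2)^{-1/2}$ makes it $L^2$-integrable. Interpolating $s$ derivatives yields $\omega^{s+\epsilon}$, the $\epsilon$ absorbing the borderline logarithmic contributions typical of dimension two.

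The main obstacle is \eqref{y2}: unlike \eqref{y1}, it cannot be reduced to a clean resolvent-in-a-Hilbert-space inequality, and the absent Sobolev embedding forces a pointwise (in $x$) analysis of $\bm G(x,\cdot,\omega)$ via the explicit near- and far-field asymptotics of the Hankel function, carefully balancing the logarithmic singularity at $z=x$, the polynomial weight, and the oscillation at high wavenumbers so that the final exponent is only $\omega^{s+\epsilon}$ rather than $\omega^{s+1/2}$ or worse.
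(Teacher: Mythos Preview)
Your approaches to both estimates differ from the paper's.

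For \eqref{y1}, the paper does not reduce to a scalar resolvent inequality. It writes $\langle H_\omega\bm f,\bm g\rangle$ as a Fourier integral against the explicit symbol $\widehat{\bm G}(\xi)$, splits $\mathbb R^2_\xi$ into three regions according to proximity to $|\xi|=\kappa_{\rm s}$ and $|\xi|=\kappa_{\rm p}$, and on each thin annulus uses the reflection $\xi\mapsto\xi^*=2\kappa\tilde\xi-\xi$ to symmetrize the singular integrand, followed by the Haj\l{}asz maximal-function characterization of $W^{1,2}$ (their Lemma~\ref{HLL}) to control the resulting difference quotients. It is exactly this last step, via $\|M(|\nabla\widehat{\mathcal J^{-s}f}|)\|_{L^2}\lesssim\|f\|_{H^{-s}_1}$, that produces the weight exponent $1$ in $\bm H^{-s}_1$. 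Your plan to cite a scalar 2D estimate handles the identity piece, but your matrix piece is not literally a product of two resolvents: after partial fractions it becomes $\omega^{-2}\xi\xi^\top\big[(|\xi|^2-\kappa_{\rm s}^2)^{-1}-(|\xi|^2-\kappa_{\rm p}^2)^{-1}\big]$, so you still need that the Riesz-type multiplier $\xi_i\xi_j/|\xi|^2$ (times a bounded radial factor near each sphere) is bounded on the weighted space $H^s_{-1}$ in order to reduce to the scalar case. That step is true but is precisely the work you gloss over.

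For \eqref{y2}, the paper's argument is much shorter than yours and uses no near/far-field splitting. It simply notes that $(H_\omega\bm f)(x)=\int\widehat{\bm G}(\xi)e^{-{\rm i}x\cdot\xi}\hat{\bm f}(\xi)\,d\xi$ is a bilinear pairing of the same shape as in the proof of \eqref{y1}, with the role of $\hat g$ played by $e^{-{\rm i}x\cdot\xi}(1+|\xi|^2)^{-(1+\epsilon)/2}$, which lies in $H^1$ uniformly for $x\in\mathcal V$; running the \eqref{y1} machinery with effective exponent $\frac{s+1+\epsilon}{2}$ then gives $\omega^{-1+(s+1+\epsilon)}=\omega^{s+\epsilon}$ directly. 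Your duality route $|(H_\omega\bm f)(x)|\le\|\bm G(x,\cdot,\omega)\|_{\bm H^s_{-1}}\|\bm f\|_{\bm H^{-s}_1}$ is correct in principle, but your assertion that the near-field piece has an $H^s$ bound independent of $\omega$ is not accurate: the leading singular term is $\ln(\kappa|x-z|)=\ln\kappa+\ln|x-z|$, so the near-field already carries a $\ln\omega$, and the intermediate zone $|z-x|\sim\omega^{-1}$ mixes singular and oscillatory behavior in a way that requires a scaling argument rather than a straight interpolation between $L^2$ and $H^1$ (the latter being infinite at the diagonal). The paper's Fourier-side trick sidesteps all of this.
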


\begin{proof}
The Green tensor ${\bm G}(x,y):=\bm G(x,y,\omega)$ satisfies 
\begin{eqnarray}\label{x31}
\mu\Delta \bm{G}(x,y)+(\lambda+\mu)\nabla\nabla\cdot
\bm{G}(x,y)+\omega^2\bm{G}(x,y)=-\delta(x-y)\bm{I} \quad {\rm in} ~ \mathbb R^{2}.
\end{eqnarray}
Taking the Fourier transform on both sides of \eqref{x31} with respect to $x-y$
leads to 
\begin{eqnarray*}
-\mu|\xi|^2\widehat{\bm G}(\xi)-(\lambda+\mu)\xi\cdot\xi^{\top}\widehat{\bm
G}(\xi)+\omega^2\widehat{\bm G}(\xi)=-\bm{I},
\end{eqnarray*}
where $\xi=(\xi_1, \xi_2)^\top$. A simple calculation gives 
\begin{equation}\label{x1}
 \widehat{{\bm G}}(\xi)=\frac{c_{\rm s}^2c_{\rm p}^2}{(|\xi|^2-c_{\rm
s}^2\omega^2)(|\xi|^2-c_{\rm p}^2\omega^2)} A(\xi),
\end{equation}
where the matrix 
\[
A(\xi):=\begin{bmatrix} \mu
|\xi|^2-\omega^2+(\lambda+\mu)\xi_2^2 &
-(\lambda+\mu)\xi_1\xi_2\\[3pt] -(\lambda+\mu)\xi_1\xi_2 &
\mu |\xi|^2-\omega^2+(\lambda+\mu)\xi_1^2
\end{bmatrix}.
\]

Let ${\bm f}=(f_1, f_2)^\top\in \bm{C}_0^{\infty}$ and ${\bm g}=(g_1,
g_2)^\top\in \bm{C}_0^\infty$. We have from the Parseval identity that 
\begin{align}
\langle H_{\omega}{\bm f}, {\bm g}\rangle&=\int_{\mathbb R^2}
\widehat{H_{\omega}{\bm f}}(\xi)\hat{{\bm g}}(\xi)d\xi=\int_{\mathbb
R^2}\widehat{{\bm G}}(\xi)\hat{{\bm f}}(\xi)\hat{{\bm
g}}(\xi)d\xi\nonumber\\
&=\int_{\mathbb R^2}\Big(\big[\widehat{
G_{11}}(\xi)\hat{f}_1(\xi)+\widehat{G_{12}}(\xi)\hat{f}_2(\xi)\big]\hat{g}_1(\xi)\nonumber\\\label{x2}
&\quad+\big[\widehat{G_{21}}(\xi)\hat{f}_1(\xi)+\widehat{G_{22}}(\xi)\hat{f}_2(\xi)\big]\hat{g}_2(\xi)\Big)d\xi,
\end{align}
where $\widehat{G_{ij}}$ denotes the $(i,j)$-entry of $\widehat{\bm
G}$. Noting that each term in \eqref{x2} has the same singularity at the
points $|\xi|=c_{\rm s}\omega$ and $|\xi|=c_{\rm p}\omega$, we only need to 
estimate the terms
\begin{align}\label{x3}
\int_{\mathbb R^2}\widehat{G_{11}}(\xi)\hat{f}_1(\xi)\hat{g}_1(\xi)d\xi&= c_{\rm s}^2c_{\rm
p}^2\int_{\mathbb R^2}\frac{\mu
|\xi|^2-\omega^2+(\lambda+\mu)\xi_2^2}{(|\xi|^2-c_{\rm
s}^2\omega^2)(|\xi|^2-c_{\rm
p}^2\omega^2)}\hat{f}_1(\xi)\hat{g}_1(\xi)d\xi,\\\label{x4}
\int_{\mathbb R^2}\widehat{G_{12}}(\xi)\hat{f}_2(\xi)\hat{g}_1(\xi)d\xi&= c_{\rm s}^2c_{\rm
p}^2\int_{\mathbb R^2}\frac{-(\lambda+\mu)\xi_1\xi_2}{(|\xi|^2-c_{\rm
s}^2\omega^2)(|\xi|^2-c_{\rm
p}^2\omega^2)}\hat{f}_2(\xi)\hat{g}_1(\xi)d\xi,
\end{align}
and the other two terms can be estimated similarly.

Define the Bessel potential operator $\mathcal J^s$ by
\begin{eqnarray*}
\mathcal J^s h(x)=\mathcal{F}^{-1}[(1+|\xi|^2)^{\frac{s}{2}}\hat{h}(\xi)]
(x)\quad\forall\, s\in{\mathbb R},\, h\in\mathcal{S},
\end{eqnarray*}
where $\mathcal F^{-1}$ is the inverse Fourier transform. To deal with the
singularity, we split the whole space $\mathbb R^2$ into three parts: 
\begin{eqnarray*}
&&\Omega_1:=\{\xi\in {\mathbb R^2}: \|\xi|-c_{\rm
s}\omega|<\varepsilon_1\omega\},\\
&&\Omega_2:=\{\xi\in {\mathbb R^2}: \|\xi|-c_{\rm
s}\omega|>\varepsilon_1\omega\;\;{\rm and}\;\;\|\xi|-c_{\rm
p}\omega|<\varepsilon_2\omega\},\\
&&\Omega_3:=\{\xi\in {\mathbb R^2}: \|\xi|-c_{\rm
s}\omega|>\varepsilon_1\omega\;\;{\rm and}\;\;\|\xi|-c_{\rm
p}\omega|>\varepsilon_2\omega\},
\end{eqnarray*}
where $\varepsilon_1>0$ and $\varepsilon_2>0$ are two constants. 

First we estimate \eqref{x3}. Let
\begin{align*}
{\rm I}_j &=c_{\rm s}^2c_{\rm p}^2\int_{\Omega_j}\frac{\mu
|\xi|^2-\omega^2+(\lambda+\mu)\xi_2^2}{(|\xi|^2-c_{\rm
s}^2\omega^2)(|\xi|^2-c_{\rm
p}^2\omega^2)}\hat{f}_1(\xi)\hat{g}_1(\xi)d\xi\\
&=c_{\rm s}^2c_{\rm p}^2\int_{\Omega_j}\frac{\mu
|\xi|^2-\omega^2+(\lambda+\mu)\xi_2^2}{(|\xi|^2-c_{\rm
s}^2\omega^2)(|\xi|^2-c_{\rm
p}^2\omega^2)}(1+|\xi|^2)^s\widehat{\mathcal
J^{-s}f_1}(\xi)\widehat{\mathcal J^{-s}g_1} (\xi)d\xi ,\quad j=1,2,3.
\end{align*}

For the term ${\rm I}_3$, using the definition of $\Omega_3$  and noting 
\begin{eqnarray*}
|(\lambda+\mu)\xi_2^2|=|(\lambda+2\mu)\xi_2^2-\mu\xi_2^2|\leq (\lambda+2\mu)|\xi|^2-\omega^2+\mu|\xi|^2-\omega^2+2\omega^2
\end{eqnarray*}
and
\begin{eqnarray*}
\left|\frac{\mu |\xi|^2-\omega^2+(\lambda+\mu)\xi_2^2}{(|\xi|^2-c_{\rm s}^2\omega^2)(|\xi|^2-c_{\rm p}^2\omega^2)}\right|\lesssim \frac{1}{\|\xi|^2-c_{\rm p}^2\omega^2|}+ \frac{1}{\|\xi|^2-c_{\rm s}^2\omega^2|}+\frac{\omega^2}{|(|\xi|^2-c_{\rm s}^2\omega^2)(|\xi|^2-c_{\rm p}^2\omega^2)|},
\end{eqnarray*}
we get 
\begin{align*}
|{\rm I}_3|&\lesssim\int_{\Omega_3}\left[\frac{(1+|\xi|^2)^s}{\|\xi|^2-c_{\rm
p}^2\omega^2|}+ \frac{(1+|\xi|^2)^s}{\|\xi|^2-c_{\rm
s}^2\omega^2|}+\frac{\omega^2(1+|\xi|^2)^s}{|(|\xi|^2-c_{\rm
s}^2\omega^2)(|\xi|^2-c_{\rm p}^2\omega^2)|}\right]\big|
\widehat{\mathcal J^{-s}f_1}(\xi) \big|\big| \widehat{\mathcal J^{-s}g_1}(\xi)
\big|d\xi\\
&\lesssim\omega^{-2+2s}\int_{\Omega_3}\big| \widehat{\mathcal J^{-s}f_1}(\xi)
\big|\big| \widehat{\mathcal J^{-s}g_1}(\xi) \big|d\xi\\\label{x7}
&\lesssim \omega^{-2+2s}\|f_1\|_{H^{-s}}\|g_1\|_{H^{-s}},
\end{align*}
where in the second step we have used the following estimates: if $|\xi|<(c_{\rm
p}-\varepsilon_2)\omega$, then
\[
\frac{(1+|\xi|^2)^s}{\|\xi|^2-c_{\rm p}^2\omega^2|}\le\frac{(1+|\xi|^2)^s}{\varepsilon_2\omega(|\xi|+c_{\rm p}\omega)}\le
\frac{(1+|(c_{\rm p}-\varepsilon_2)\omega|^2)^s}{\varepsilon_2c_{\rm
p}\omega^2}\lesssim\omega^{-2+2s};
\]
if $|\xi|>(c_{\rm p}+\varepsilon_2)\omega$ in $\Omega_3$, then 
\[
\frac{(1+|\xi|^2)^s}{\|\xi|^2-c_{\rm p}^2\omega^2|}\le\frac{(1+|\xi|^2)^s}{\varepsilon_2\omega(|\xi|+c_{\rm p}\omega)}\le\frac{(2|\xi|^2)^s}{\varepsilon_2\omega|\xi|}\lesssim\frac1{\omega|\xi|^{1-2s}}\lesssim \omega^{-2+2s}.
\]

For the term ${\rm I}_1$, we have
\begin{align*}
{\rm I}_1 &= c_{\rm p}^2\int_{\Omega_1}\frac{1}{|\xi|^2-c_{\rm
p}^2\omega^2}(1+|\xi|^2)^s\widehat{\mathcal
J^{-s}f_1}(\xi)\widehat{\mathcal J^{-s}g_1} (\xi)d\xi\\
&\quad + c_{\rm s}^2c_{\rm
p}^2\int_{\Omega_1}\frac{(\lambda+\mu)\xi_2^2}{(|\xi|^2-c_{\rm
s}^2\omega^2)(|\xi|^2-c_{\rm
p}^2\omega^2)}(1+|\xi|^2)^s\widehat{\mathcal
J^{-s}f_1}(\xi)\widehat{\mathcal J^{-s}g_1} (\xi)d\xi\\
& =:{\rm I}_{11}+{\rm I}_{12}.
\end{align*}
For $\xi\in\Omega_1$, we can choose $\varepsilon_1$ small enough such that
$\|\xi|-c_{\rm p}\omega|\geq c\omega$ for some $c>0$, and follow similarly the
estimate of ${\rm I}_3$ to get 
 \begin{eqnarray*}\label{x9}
 |{\rm I}_{11}|\lesssim \omega^{-2+2s}\|f_1\|_{H^{-s}}\|g_1\|_{H^{-s}}. 
 \end{eqnarray*}
To estimate ${\rm I}_{12}$, we make the following change of variables:
\begin{eqnarray*}
\xi^*=\xi+2(c_{\rm s}\omega-|\xi|)\tilde{\xi}=2c_{\rm
s}\omega\tilde{\xi}-\xi,
\end{eqnarray*}
where $\tilde{\xi}:=\xi/|\xi|$. It can be easily verified that the change of
variables maps the domain $\Omega_{11}:=\{\xi\in{\mathbb R^2}:c_{\rm
s}\omega-\varepsilon_1\omega<|\xi|<c_{\rm s}\omega\}$ to the domain 
$\Omega_{12}:=\{\xi\in{\mathbb R^2}:c_{\rm s}\omega<|\xi|<c_{\rm
s}\omega+\varepsilon_1\omega\}$, and the Jacobian for the change of variables
is 
\begin{eqnarray*}
J(\xi)=\frac{2c_{\rm s}\omega}{|\xi|}-1.
\end{eqnarray*}

Using the fact $\Omega_1=\Omega_{11}\cup
\Omega_{12}\cup\{\xi\in\mathbb{R}^2:|\xi|=c_{\rm s}\omega\}$ with
$\{\xi\in\mathbb{R}^2:|\xi|=c_{\rm s}\omega\}$ being a set of zero measure, we
obtain 
\begin{align*}
{\rm I}_{12}&=c_{\rm s}^2c_{\rm p}^2\int_{\Omega_{11}\cup
\Omega_{12}}\frac{(\lambda+\mu)\xi_2^2}{(|\xi|^2-c_{\rm
s}^2\omega^2)(|\xi|^2-c_{\rm
p}^2\omega^2)}(1+|\xi|^2)^s\widehat{\mathcal
J^{-s}f_1}(\xi)\widehat{\mathcal J^{-s}g_1} (\xi)d\xi\\
&=c_{\rm s}^2c_{\rm
p}^2\int_{\Omega_{12}}\frac{(\lambda+\mu)\xi_2^2}{(|\xi|^2-c_{\rm
s}^2\omega^2)(|\xi|^2-c_{\rm
p}^2\omega^2)}(1+|\xi|^2)^s\widehat{\mathcal
J^{-s}f_1}(\xi)\widehat{\mathcal J^{-s}g_1} (\xi)d\xi\\
&\quad +c_{\rm s}^2c_{\rm
p}^2\int_{\Omega_{12}}\frac{(\lambda+\mu){\xi^*_2}^2}{(|\xi^*|^2-c_{\rm
s}^2\omega^2)(|\xi^*|^2-c_{\rm
p}^2\omega^2)}(1+|\xi^*|^2)^s\widehat{\mathcal
J^{-s}f_1}(\xi^*)\widehat{\mathcal J^{-s} g_1 } (\xi^*)J(\xi)d\xi\\
&=c_{\rm s}^2c_{\rm
p}^2\int_{\Omega_{12}}m_1(\xi,\omega)(1+|\xi|^2)^s\widehat{\mathcal J^{-s}f_1}
(\xi)\widehat{\mathcal J^{-s}g_1}(\xi)d\xi\\
&\quad+c_{\rm s}^2c_{\rm
p}^2\int_{\Omega_{12}}m_2(\xi,\omega)\big[(1+|\xi^*|^2)^s\widehat{\mathcal
J^{-s}
f_1}(\xi^*)\widehat{\mathcal
J^{-s}g_1}(\xi^*)-(1+|\xi|^2)^s\widehat{\mathcal J^{-s}f_1}
(\xi)\widehat{\mathcal J^{-s}g_1}(\xi)\big]J(\xi)d\xi\\\label{x12}
&=:{\rm I}_{13}+{\rm I}_{14},
\end{align*}
where
\begin{align*}
m_1(\xi,\omega)&=\frac{(\lambda+\mu)\xi_2^2}{(|\xi|^2-c_{\rm
s}^2\omega^2)(|\xi|^2-c_{\rm
p}^2\omega^2)}+\frac{(\lambda+\mu){\xi^*_2}^2}{(|\xi^*|^2-c_{\rm
s}^2\omega^2)(|\xi^*|^2-c_{\rm p}^2\omega^2)}J(\xi),\\
m_2(\xi,\omega)&=\frac{(\lambda+\mu){\xi^*_2}^2}{(|\xi^*|^2-c_{\rm s}^2\omega^2)(|\xi^*|^2-c_{\rm p}^2\omega^2)}.
\end{align*}
For $\xi\in\Omega_{12}$, it is not difficult to show that $\xi^*\in\Omega_{11}$
with $|\xi^*|^2=4c_{\rm s}^2\omega^2+|\xi|^2-4c_{\rm s}\omega|\xi|$. Then there
exists a constant $C>0$ such that 
\begin{eqnarray*}
\left|\frac{(\lambda+\mu)\xi_2^2}{|\xi|^2-c_{\rm p}^2\omega^2}\right|\leq C
\end{eqnarray*}
and
\begin{align*}
\left|\frac{1}{|\xi|^2-c_{\rm s}^2\omega^2}+\frac{1}{|\xi^*|^2-c_{\rm s}^2\omega^2}J(\xi)\right|
&=\left|\frac1{(|\xi|-c_{\rm s}\omega)(|\xi|+c_{\rm s}\omega)}+\frac1{(|\xi|-c_{\rm s}\omega)(|\xi|-3c_{\rm s}\omega)}\frac{2c_{\rm s}\omega-|\xi|}{|\xi|}\right|\\
&=\frac{2c_{\rm s}^2\omega^2-2c_{\rm s}\omega|\xi|}{(|\xi|-c_{\rm s}\omega)(|\xi|+c_{\rm s}\omega)(|\xi|-3c_{\rm s}\omega)|\xi|}
\lesssim\omega^{-2},
\end{align*}
which leads to 
\begin{align*}
|m_1(\xi,\omega)|&\leq \left|\frac{(\lambda+\mu)\xi_2^2}{|\xi|^2-c_{\rm p}^2\omega^2}\left[\frac{1}{|\xi|^2-c_{\rm s}^2\omega^2}+\frac{1}{|\xi^*|^2-c_{\rm s}^2\omega^2}J(\xi)\right]\right|\\
&\quad +\left|\frac{J(\xi)}{|\xi^*|^2-c_{\rm s}^2\omega^2}\left[\frac{(\lambda+\mu){\xi^*_2}^2}{|\xi^*|^2-c_{\rm p}^2\omega^2}-\frac{(\lambda+\mu)\xi_2^2}{|\xi|^2-c_{\rm p}^2\omega^2}\right]\right|
\lesssim\omega^{-2}.
\end{align*}
Hence, the term ${\rm I}_{13}$ admits the estimate 
\begin{eqnarray*}\label{x15}
|{\rm I}_{13}|\lesssim\omega^{-2+2s}\|f_1\|_{H^{-s}}\|g_1\|_{H^{-s}}.
\end{eqnarray*}
The item ${\rm I}_{14}$ can be decomposed as
\begin{align*}
{\rm I}_{14}&=c_{\rm s}^2c_{\rm
p}^2\int_{\Omega_{12}}m_2(\xi,\omega)\left[(1+|\xi^*|^2)^s-(1+|\xi|^2)^s\right]
\widehat{\mathcal
J^{-s}f_1}(\xi)\widehat{\mathcal J^{-s}g_1}(\xi)J(\xi)d\xi\\
&\quad +c_{\rm s}^2c_{\rm
p}^2\int_{\Omega_{12}}m_2(\xi,\omega)(1+|\xi^*|^2)^s\big[\widehat{\mathcal
J^{-s} f_1 } (\xi^*)-\widehat{\mathcal
J^{-s}f_1}(\xi)\big]\widehat{\mathcal J^{-s}g_1} (\xi)J(\xi)d\xi\\
&\quad +c_{\rm s}^2c_{\rm
p}^2\int_{\Omega_{12}}m_2(\xi,\omega)(1+|\xi^*|^2)^s\big[\widehat{\mathcal
J^{-s} g_1 } (\xi^*)-\widehat{\mathcal
J^{-s}g_1}(\xi)\big]\widehat{\mathcal J^{-s}f_1} (\xi^*)J(\xi)d\xi\\
&=:{\rm I}_{15}+{\rm I}_{16}+{\rm I}_{17}.
\end{align*}
By the mean value theorem, we get 
\begin{align*}
&\left|m_2(\xi,\omega)\left[(1+|\xi^*|^2)^s-(1+|\xi|^2)^s\right]J(\xi)\right|\\
&=\left|\frac{(\lambda+\mu)(\xi_2^*)^2}{(|\xi^*|^2-c_{\rm s}^2\omega^2)(|\xi^*|^2-c_{\rm p}^2\omega^2)}s\left(1+\theta|\xi^*|^2+(1-\theta)|\xi|^2\right)^{s-1}(|\xi^*|^2-|\xi|^2)\frac{2c_{\rm s}\omega-|\xi|}{|\xi|}\right|\\
&=\left|\frac{(\lambda+\mu)(\xi_2^*)^2}{(|\xi|-c_{\rm s}\omega)(|\xi|-3c_{\rm s}\omega)(|\xi^*|^2-c_{\rm p}^2\omega^2)}s\left(1+\theta|\xi^*|^2+(1-\theta)|\xi|^2\right)^{s-1}4c_{\rm s}\omega(c_{\rm s}\omega-|\xi|)\frac{2c_{\rm s}\omega-|\xi|}{|\xi|}\right|\\
&\lesssim\omega^{-2+2s}
\end{align*}
with some $\theta\in(0,1)$. It shows that 
\begin{eqnarray*}\label{x18}
|{\rm I}_{15}|\lesssim\omega^{-2+2s}\|f_1\|_{H^{-s}}\|g_1\|_{H^{-s}}.
\end{eqnarray*}

To estimate ${\rm I}_{16}$ and ${\rm I}_{17}$, we employ the following
characterization of $W^{1,p}({\mathbb R^d})$ introduced in \cite{PH}.

\begin{lemma}\label{HLL}
For $1<p\leq\infty$, the function $u\in W^{1,p}({\mathbb R^d})$ if and only if
there exist $g\in L^p({\mathbb R^d})$ and $C>0$ such that
\begin{eqnarray*}
|u(x)-u(y)|\leq C|x-y|(g(x)+g(y)).
\end{eqnarray*}
Moreover, we can choose $g=M(|\nabla u|)$, where $M$ is defined by
\begin{eqnarray*}
M(f)(x)=\sup_{r>0}\frac{1}{|B(x,r)|}\int_{B(x,r)}|f(y)|dy
\end{eqnarray*}
and is called the Hardy--Littlewood maximal function of $f$.
\end{lemma}

For $f_1\in C_0^{\infty}$, we have $\widehat{\mathcal J^{-s}f_1}\in
\mathcal{S}\subset H^1$. An application of Lemma \ref{HLL} gives
\begin{equation}\label{x21}
\left|\widehat{\mathcal
J^{-s}f_1}(\xi^*)-\widehat{\mathcal J^{-s}f_1}(\xi)\right|\lesssim \big|c_{\rm
s}\omega-|\xi|\big|\big[M(|\nabla \widehat{\mathcal
J^{-s}f_1}|)(\xi^*)+M(|\nabla \widehat{\mathcal J^{-s}f_1}|)(\xi)\big].
\end{equation} 
By \cite[Theorem 2.1]{LP}, we get
\begin{align}\label{x23}
\|M(|\nabla \widehat{\mathcal J^{-s}f_1}|)\|_{L^2}&\lesssim \|\nabla
\widehat{\mathcal J^{-s}f_1}\|_{L^2}\lesssim
\|(I-\Delta)^{\frac12}\widehat{\mathcal J^{-s}f_1}\|_{L^2}\notag\\
&=\|(I-\Delta)^{\frac12}(1+|\cdot|^2)^{-\frac
s2}\hat{f_1}(\cdot)\|_{L^2}=\|f_1\|_{H^{-s}_1},
\end{align}
where \eqref{eq:Hsdelta} is used in the last step. Combining (\ref{x21}) and
(\ref{x23}) gives 
\begin{align*}
|{\rm I}_{16}|&\lesssim \omega^{-1+2s}\int_{\Omega_{12}}\big[M(|\nabla
\widehat{\mathcal J^{-s}f_1}|)(\xi^*)+M(|\nabla
\widehat{\mathcal
J^{-s}f_1}|)(\xi)\big]|\widehat{\mathcal J^{-s}g_1}(\xi)|d\xi\\
&\lesssim \omega^{-1+2s}\|f_1\|_{H_1^{-s}}\|g_1\|_{H^{-s}}.
\end{align*}
The item ${\rm I}_{17}$ can be similarly estimated and satisfies
\begin{eqnarray*}
|{\rm I}_{17}|\lesssim \omega^{-1+2s}\|f_1\|_{H^{-s}}\|g_1\|_{H_1^{-s}}.
\end{eqnarray*}

Hence we conclude from the above estimates that  
\begin{eqnarray*}
|{\rm I}_{1}|\lesssim \omega^{-1+2s}\|f_1\|_{H_1^{-s}}\|g_1\|_{H_1^{-s}}.
\end{eqnarray*}
Similarly, we may repeat the steps for the estimate of $I_1$ and show that 
\begin{eqnarray*}
|{\rm I}_{2}|\lesssim \omega^{-1+2s}\|f_1\|_{H_1^{-s}}\|g_1\|_{H_1^{-s}}.
\end{eqnarray*}
It follows from the estimates of ${\rm I}_j$, $j=1,2,3$ that \eqref{x3}
satisfies the estimate
\begin{eqnarray*}
\left|\int_{\mathbb R^2}\widehat{G_{11}}(\xi)\hat{f}_1(\xi)\hat{g}_1(\xi)d\xi\right|\lesssim
\omega^{-1+2s}\|f_1\|_{H_1^{-s}}\|g_1\|_{H_1^{-s}}.
\end{eqnarray*}

Next is to estimate \eqref{x4}. Let
\begin{align*}
{\rm II}_j&=-c_{\rm s}^2c_{\rm
p}^2\int_{\Omega_j}\frac{(\lambda+\mu)\xi_1\xi_2}{(|\xi|^2-c_{\rm
s}^2\omega^2)(|\xi|^2-c_{\rm
p}^2\omega^2)}\hat{f}_2(\xi)\hat{g}_1(\xi)d\xi\\
&=-c_{\rm s}^2c_{\rm
p}^2\int_{\Omega_j}\frac{(\lambda+\mu)\xi_1\xi_2}{(|\xi|^2-c_{\rm
s}^2\omega^2)(|\xi|^2-c_{\rm
p}^2\omega^2)}(1+|\xi|^2)^s\widehat{\mathcal
J^{-s}f_2}(\xi)\widehat{\mathcal J^{-s}g_1} (\xi)d\xi ,\quad j=1,2,3.
\end{align*}

Following the same estimate as that of ${\rm I}_3$ and noting 
\[
|(\lambda+\mu)\xi_1\xi_2|=|(\lambda+2\mu)\xi_1\xi_2-\mu\xi_1\xi_2|\le\frac{(\lambda+2\mu)|\xi|^2-\omega^2}2+\frac{\mu|\xi|^2-\omega^2}2+\omega^2,
\]
we have
\[
|{\rm II}_3|\lesssim\omega^{-2+2s}\|f_2\|_{H^{-s}}\|g_1\|_{H^{-s}}. 
\]
As for the estimate of
\[
{\rm II}_1=-c_{\rm s}^2c_{\rm
p}^2\int_{\Omega_1}\frac{(\lambda+\mu)\xi_1\xi_2}{(|\xi|^2-c_{\rm
s}^2\omega^2)(|\xi|^2-c_{\rm
p}^2\omega^2)}(1+|\xi|^2)^s\widehat{\mathcal
J^{-s}f_2}(\xi)\widehat{\mathcal J^{-s}g_1} (\xi)d\xi,
\]
it is similar to that of ${\rm I}_{12}$ and admits
\[
|{\rm II}_1|\lesssim\omega^{-1+2s}\|f_2\|_{H^{-s}_1}\|g_1\|_{H^{-s}_1},
\]
which may further lead to the estimate
\[
|{\rm II}_2|\lesssim\omega^{-1+2s}\|f_2\|_{H^{-s}_1}\|g_1\|_{H^{-s}_1}. 
\]
Combining the above estimates yields 
\[
\left|\int_{\mathbb R^2}\widehat{G_{11}}(\xi)\hat{f}_1(\xi)\hat{g}_1(\xi)d\xi\right|\lesssim\omega^{
-1+2s}\|f_2\|_{H^{-s}_1}\|g_1\|_{H^{-s}_1}. 
\]

It follows from  \eqref{x3}--\eqref{x4} that 
(\ref{x2}) has the following estimate: 
\begin{eqnarray*}
\left|\langle H_{\omega}{\bm f}, {\bm g}\rangle\right|\lesssim
\omega^{-1+2s}\|{\bm f}\|_{\bm H_1^{-s}}\|{\bm g}\|_{\bm
H_1^{-s}}\quad\forall\,\bm f,\bm g\in\bm C_0^\infty.
\end{eqnarray*}
This result can be extended for any $\bm f,\bm g\in\bm H_1^{-s}$ since $\bm
C_0^\infty$ is dense in $\bm H_1^{-s}$. The density argument can be found in
\cite[Theorem 2.2]{LLM}. It then completes the proof of \eqref{y1}.

To prove (\ref{y2}), let ${\bm f}=(f_1,f_2)^\top\in \bm C_0^{\infty}$. We have 
\begin{align}\notag
(H_{\omega}{\bm f})(x)&=\int_{\mathbb R^2}{\bm G}(x,y){\bm f}(y)dy\\\notag
&=\int_{\mathbb R^2}(1+|\xi|^2)^{\frac{s}{2}}\widehat{\bm
G}(x,\xi)\widehat{\mathcal J^{-s}{\bm f}}(\xi)d\xi\\\label{x32}
&=\int_{\mathbb R^2}(1+|\xi|^2)^{\frac{s}{2}}
\begin{bmatrix} \widehat{G_{11}}(x,\xi)\widehat{\mathcal J^{-s}f_1}(\xi)+\widehat{G_{12}}(x,\xi)\widehat{\mathcal J^{-s}f_2}(\xi)\\
\widehat{G_{21}}(x,\xi)\widehat{\mathcal J^{-s}f_1}(\xi)+\widehat{G_{22}}(x,\xi)\widehat{\mathcal J^{-s}f_2}(\xi)
\end{bmatrix}
d\xi,
\end{align}
where $\widehat{\bm G}(x,\xi)$, different from $\widehat{\bm G}(\xi)$, 
denotes the Fourier transform of $\bm G(x,y)$ obtained by taking the Fourier
transform on both sides of \eqref{x31} with respect to $y$ and satisfies
\[
-\mu|\xi|^2\widehat{\bm G}(x,\xi)-(\lambda+\mu)\xi\cdot\xi^\top\widehat{\bm G}(x,\xi)+\omega^2\widehat{\bm G}(x,\xi)=-e^{-{\rm i}x\cdot\xi}\bm I.
\]
Comparing the above equation with (\ref{x1}), we get $\widehat{G_{ij}}(x,\xi)=e^{-{\rm i}x\cdot\xi}\widehat{G_{ij}}(\xi)$. It follows
from the same arguments as those for the item (\ref{x3}) that  
\begin{align*}
&\left|\int_{\mathbb R^2}(1+|\xi|^2)^{\frac{s}{2}}\widehat{G_{11}}(x,\xi)\widehat{\mathcal J^{-s}f_1}(\xi)d\xi\right|\\
&=\left|\int_{\mathbb R^2} \widehat{G_{11}}(\xi)(1+|\xi|^2)^{\frac{s+\epsilon+1}{2}}\widehat{\mathcal J^{-s}f_1}(\xi)e^{-{\rm
i}x\cdot\xi}(1+|\xi|^2)^{\frac{-1-\epsilon}2}d\xi\right|\\
&\lesssim\frac1{\omega^{1-2\frac{s+\epsilon+1}{2}}}\|f_1\|_{H^{-s}_1}
\lesssim\omega^{s+\epsilon}\|f_1\|_{H^{-s}_1},
\end{align*}
where we have used the fact that the function
\[
g(\xi):=e^{-{\rm i}x\cdot\xi}(1+|\xi|^2)^{\frac{-1-\epsilon}2}
\]
satisfies $g\in H^1$ for any $x\in\mathcal V$. The estimates can be similarly
obtained for the other three items in (\ref{x32}). Therefore we have 
\begin{eqnarray*}
\left\|H_{\omega}{\bm f}\right\|_{\bm L^\infty(\mathcal V)}\lesssim \omega^{s+\epsilon}\|{\bm f}\|_{\bm H^{-s}_1},
\end{eqnarray*}
which completes the proof of (\ref{y2}).
\end{proof}

Based on the estimates for the operator $H_\omega$, the following results
present the estimates for the operator $K_\omega$. 

\begin{lemma}\label{lm:K}
Let $\mathcal V\subset\mathbb R^2$ be a bounded domain and $\rho$ satisfy Assumption
\ref{as:rho}. For any $s\in(\frac{2-m}2,\frac12)$, it holds almost surely that 
\begin{align*}
\|K_{\omega}\|_{\mathcal{L}( \bm H_{-1}^{s})} &\lesssim
\omega^{-1+2s},\\
\|K_{\omega}\|_{\mathcal{L}(\bm H_{-1}^{s}, \bm L^{\infty}(\mathcal V))} &\lesssim
\omega^{s+\epsilon}.
\end{align*}
\end{lemma}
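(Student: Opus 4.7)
The plan is to write $K_\omega \bm u = H_\omega(\rho\bm u)$ and invoke Lemma \ref{lm:H} to push the $\omega$-dependence entirely onto $H_\omega$, so that both estimates reduce to a single almost-sure multiplication bound of the form
\[
\|\rho\bm u\|_{\bm H^{-s}_1} \lesssim \|\bm u\|_{\bm H^s_{-1}}.
\]
Once this is in hand, the first bound follows from $\|H_\omega(\rho\bm u)\|_{\bm H^s_{-1}} \lesssim \omega^{-1+2s}\|\rho\bm u\|_{\bm H^{-s}_1}$, and the second from $\|H_\omega(\rho\bm u)\|_{\bm L^\infty(\mathcal V)} \lesssim \omega^{s+\epsilon}\|\rho\bm u\|_{\bm H^{-s}_1}$, with no further $\omega$-analysis required.

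First I would exploit the compact support of $\rho$ in $D$ granted by Assumption \ref{as:rho}. Since $\rho\bm u$ is supported in $\overline D$, the weight $(1+|\cdot|^2)^{1/2}$ is bounded above and below by fixed constants on the support, so $\|\rho\bm u\|_{\bm H^{-s}_1}\lesssim \|\rho\bm u\|_{\bm H^{-s}}$; dually, on a neighborhood of $D$ the weight $(1+|\cdot|^2)^{-1/2}$ is bounded below, so that $\|\bm u\|_{\bm H^s(D')}\lesssim \|\bm u\|_{\bm H^s_{-1}}$ for any bounded $D'\supset D$. Thus the estimate reduces to showing that multiplication by $\rho$ maps $\bm H^s$ to $\bm H^{-s}$ almost surely. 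By Lemma \ref{le1}(ii) with $d=2$, we have $\rho\in W^{\frac{m-2}{2}-\epsilon,p}_0(D)$ almost surely for arbitrarily small $\epsilon>0$ and any $p\in(1,\infty)$, so the condition $s>\frac{2-m}{2}$ allows us to choose $\gamma\in(\frac{2-m}{2},s)$ and an $\epsilon>0$ so that $\rho\in W^{-\gamma,\tilde p}_0(D)$ for an appropriate $\tilde p$.

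To close the multiplication estimate I would use duality together with a Sobolev product bound on $D$: for $\bm v,\bm w\in\bm H^s$,
\[
|\langle \rho\bm v,\bm w\rangle| = |\langle \rho,\bm v\cdot\bm w\rangle| \lesssim \|\rho\|_{W^{-\gamma,\tilde p}_0(D)}\,\|\bm v\cdot\bm w\|_{W^{\gamma,\tilde p'}(D)},
\]
and then combine Sobolev embeddings $\bm H^s(D)\hookrightarrow \bm L^{q_0}(D)$ with a Hölder/fractional product estimate to bound $\|\bm v\cdot\bm w\|_{W^{\gamma,\tilde p'}(D)}$ by $\|\bm v\|_{\bm H^s}\|\bm w\|_{\bm H^s}$. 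The flexibility in choosing $p$ in Lemma \ref{le1}(ii) is essential here, since for $s<\tfrac12$ the classical algebra estimate $\bm H^s\cdot\bm H^s\hookrightarrow \bm H^s$ fails in two dimensions and we must absorb the gap between $\gamma$ and $s$ by tuning the integrability exponents.

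The main obstacle will be this multiplication/duality step in Step 2: verifying that the Sobolev exponents and integrability indices can simultaneously satisfy the constraints imposed by (a) the negative regularity $\frac{m-2}{2}$ of $\rho$, (b) the requirement $s<\frac12$, and (c) the boundedness of the bilinear pairing on the bounded domain $D$. The range $s\in(\frac{2-m}{2},\frac12)$ in the statement is precisely the window where this balance is feasible, mirroring the interval appearing in Lemma \ref{lm:operators}(iii) and in the proof of Theorem \ref{tm:LS}; once this is established, the remaining assembly into the two operator-norm bounds is straightforward from Lemma \ref{lm:H}.
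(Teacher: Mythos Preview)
Your proposal is correct and follows essentially the same route as the paper: write $K_\omega\bm u=H_\omega(\rho\bm u)$, reduce via Lemma \ref{lm:H} to the single multiplication bound $|\langle\rho,\bm u\cdot\bm v\rangle|\lesssim\|\bm u\|_{\bm H^s_{-1}}\|\bm v\|_{\bm H^s_{-1}}$, and establish the latter by pairing $\rho\in W^{-\gamma,p}_0(D)$ against the product in $W^{\gamma,p'}$ using a fractional Leibniz/product rule together with the Sobolev embedding $\bm H^s\hookrightarrow\bm W^{\gamma,\tilde q}$ on a bounded domain. The only place to tighten is your weight-removal step: the claim $\|\rho\bm u\|_{\bm H^{-s}_1}\lesssim\|\rho\bm u\|_{\bm H^{-s}}$ does not follow from ``the weight is bounded on the support'' alone (the Bessel potential is nonlocal); the paper handles this cleanly by inserting a cutoff $\vartheta\in C_0^\infty$ with $\vartheta\equiv1$ on $D$ and working with the bilinear form directly, which is exactly the duality argument you allude to in the next sentence.
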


\begin{proof}
For any $\bm u\in\bm H^{s}_{-1}$, it holds $K_\omega\bm u=H_\omega
(\rho\bm u)$ with $H_\omega$ being a bounded operator from $\bm
H_1^{-s}$ to $\bm H_{-1}^s$ according to Lemma \ref{lm:H}. 

We first claim that $\rho\bm u\in\bm H_1^{-s}$ for any $\bm u\in\bm H_{-1}^s$.
Note that $\rho\in
W^{-\gamma,p}_0(D)$ for any $\gamma>\frac{2-m}2$ and $p>1$ according to Lemma \ref{le1}.
For any $\bm u,\bm v\in\mathcal{S}$, define
$\langle \rho\bm u,\bm v\rangle:=\langle \rho,\bm
u\cdot\bm v\rangle$ and a cutoff function $\vartheta\in C_0^{\infty}$ whose
support $\tilde D$ has a locally Lipschitz boundary and $\vartheta(x)=1$ if
$x\in D\subset\tilde D$. It is easy to verify that 
\begin{align*}
\left|\langle \rho\bm u,\bm v\rangle\right|&=\left|\langle
\rho,(\vartheta\bm u)\cdot(\vartheta\bm v)\rangle\right|\\
&=\left|\langle(I-\Delta)^{-\gamma}
\rho,(I-\Delta)^\gamma[(\vartheta\bm u)\cdot(\vartheta\bm
v)]\rangle\right|\\
&\le \|\rho\|_{W^{-\gamma,p}}\|(I-\Delta)^\gamma[(\vartheta\bm
u)\cdot(\vartheta\bm v)]\|_{L^{q}}
\end{align*}
with $q>1$ satisfying $\frac1p+\frac1q=1$.
It follows from  the fractional Leibniz principle with $\tilde q$ satisfying $\frac1{q}=\frac12+\frac1{\tilde q}$ that 
\[
\|(I-\Delta)^\gamma[(\vartheta\bm u)\cdot(\vartheta\bm v)]\|_{L^{q}}\le\|\vartheta\bm u\|_{\bm L^2(\tilde D)}\|\vartheta\bm v\|_{\bm
W^{\gamma,\tilde q}(\tilde D)}+\|\vartheta\bm v\|_{\bm L^2(\tilde
D)}\|\vartheta\bm u\|_{\bm W^{\gamma,\tilde q}(\tilde D)}.
\]
For any $s\in(\frac{2-m}2,\frac12)$, there exist $\gamma\in(\frac{2-m}2,s)$ and $q>1$ such that $\gamma\le s$ and $\frac1q-\frac12=\frac1{\tilde
q}>\frac12-\frac{s-\gamma}2$, which implies $\bm H^s(\tilde D)\hookrightarrow\bm
W^{\gamma,\tilde q}(\tilde D)$. Hence
\[
\left|\langle \rho\bm u,\bm
v\rangle\right|\le\|\rho\|_{W^{-\gamma,p}}\|\vartheta\bm
u\|_{\bm W^{\gamma,\tilde q}(\tilde D)}\|\vartheta\bm v\|_{\bm W^{\gamma,\tilde q}(\tilde D)}\lesssim\|\rho\|_{W^{-\gamma,p}}\|\vartheta\bm u\|_{\bm H^s(\tilde D)}\|\vartheta\bm v\|_{\bm H^s(\tilde D)}.
\]
Using the facts that $\|\vartheta\bm u\|_{\bm H^s(\tilde D)}\lesssim\|\bm
u\|_{\bm H^s_{-2}}\lesssim\|\bm u\|_{\bm H^s_{-1}}$ and that $\mathcal{S}$ is
dense in $\bm H^s_{-1}$ proved in \cite[Theorem 2.2]{LLM}, we get almost
surely that 
\begin{align*}
\left|\langle \rho\bm u,\bm
v\rangle\right|\lesssim\|\bm u\|_{\bm H^s_{-1}}\|\bm v\|_{\bm
H^s_{-1}}\quad\forall\,\bm u,\bm v\in\bm H_{-1}^s,
\end{align*}
which completes the claim. Then the following two estimates hold almost surely: 
\[
\|K_\omega\bm u\|_{\bm H^s_{-1}}\le\|H_\omega\|_{\mathcal{L}(\bm H_1^{-s},\bm
H^s_{-1})}\|\rho\bm u\|_{\bm
H^{-s}_1}\lesssim\omega^{-1+2s}\|\bm u\|_{\bm H^s_{-1}}
\]
and
\[
\|K_\omega\bm u\|_{\bm L^{\infty}(\mathcal V)}\le\|H_\omega\|_{\mathcal{L}(\bm H_1^{-s},\bm
L^{\infty}(\mathcal V))}\|\rho\bm u\|_{\bm
H^{-s}_1}\lesssim\omega^{s+\epsilon}\|\bm u\|_{\bm
H^s_{-1}},
\]
which complete the proof.
\end{proof}

\subsection{Convergence of the Born series}

Let assumptions in Lemma \ref{lm:K} hold and
$U\subset\mathbb{R}^2\backslash\overline{D}$ be a bounded and convex measurement domain
which has a locally Lipschitz boundary and a positive distance from $D$. This
section is to show the convergence of the Born series defined in \eqref{c2}. 

It follows from (\ref{c2}) that 
\begin{eqnarray}\label{c4}
[(I+K_{\omega})\sum_{j=0}^{N}\bm{u}_j(\cdot,y)](x)=\bm{u}_0(x,y)+(-1)^{N}[K_{\omega}^{N+1}\bm{u}_0(\cdot,y)](x).
\end{eqnarray}
Note that 
\[
[K_\omega\bm u_0(\cdot,y)](x)=\int_{D}\bm
G(x,z,\omega)\rho(z)\bm u_0(z,y)dz\quad \forall\, x,y\in U,
\]
where
$\bm{u}_0(z,y)=\bm{G}(z,y,\omega)\bm{a}$ and $\bm{G}(z,y,\omega)$ is smooth
for any $z\in D$ and $y\in U$. 

We begin with the estimate for $\bm u_0$.

\begin{lemma}\label{lm:u0}
Let $U\subset\mathbb R^2\backslash\overline{D}$ be a bounded and convex domain having a
locally Lipschitz boundary and a positive distance to $D$.
For any $s\in[0,1]$, $p\in(1,\infty)$ and any fixed $y\in U$, the following estimate holds:
\[
\|\bm u_0(\cdot,y)\|_{\bm W^{s,p}(D)}\lesssim\omega^{-\frac12+s}.
\]
\end{lemma}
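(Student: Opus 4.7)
\smallskip

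My plan is to prove the bound by working directly from the explicit formula \eqref{a4} for the Green tensor, exploiting the crucial geometric hypothesis that $\mathrm{dist}(U,D)=:d_0>0$, which keeps us away from the diagonal singularity of $\Phi$, and then using the large-argument asymptotics of the Hankel function.

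First I would isolate the two building blocks of $\bm G(\cdot,y,\omega)\bm a$: the ``Helmholtz-type'' piece $\mu^{-1}\Phi(\cdot,y,\kappa_{\rm s})\bm a$ and the ``pressure correction'' $\omega^{-2}\nabla_x\nabla_x^\top[\Phi(\cdot,y,\kappa_{\rm s})-\Phi(\cdot,y,\kappa_{\rm p})]\bm a$. Since $|x-y|\ge d_0>0$ for all $x\in D$ and $y\in U$, and since $\kappa_{\rm p},\kappa_{\rm s}\sim\omega$, the quantity $\kappa|x-y|$ is bounded below by a positive multiple of $\omega$ for $\omega\ge 1$. The standard large-argument asymptotic
\[
H_0^{(1)}(t)=\sqrt{\tfrac{2}{\pi t}}\,e^{{\rm i}(t-\pi/4)}\bigl(1+O(t^{-1})\bigr),
\]
together with the recurrence $H_0^{(1)\prime}=-H_1^{(1)}$ and analogous expansions for $H_n^{(1)}$, gives uniformly in $(x,y)\in D\times U$ the pointwise bounds $|\partial_x^\alpha \Phi(x,y,\kappa)|\lesssim \kappa^{|\alpha|-1/2}$ for $|\alpha|\le 2$. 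In particular $|\Phi|\lesssim \omega^{-1/2}$, $|\nabla_x\Phi|\lesssim \omega^{1/2}$, and $|\nabla_x^2\Phi|\lesssim \omega^{3/2}$.

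Step two is to turn these pointwise bounds into the $s=0$ and $s=1$ endpoint estimates. For the first piece, integrating over the bounded set $D$ gives $\|\mu^{-1}\Phi(\cdot,y,\kappa_{\rm s})\|_{L^p(D)}\lesssim\omega^{-1/2}$, uniformly in $y\in U$, and one more spatial derivative costs one power of $\omega$. For the correction piece, the prefactor $\omega^{-2}$ exactly compensates the two derivatives, so $\omega^{-2}\nabla_x\nabla_x^\top[\Phi(\cdot,y,\kappa_{\rm s})-\Phi(\cdot,y,\kappa_{\rm p})]$ is again of order $\omega^{-1/2}$ pointwise, and one further $x$-derivative raises this by one power of $\omega$. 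Summing, I obtain
\[
\|\bm u_0(\cdot,y)\|_{\bm L^p(D)}\lesssim \omega^{-1/2},\qquad \|\bm u_0(\cdot,y)\|_{\bm W^{1,p}(D)}\lesssim \omega^{1/2},
\]
uniformly in $y\in U$ and for $\omega\ge 1$.

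Finally, the intermediate values $s\in(0,1)$ follow by complex interpolation of the Sobolev scale $(\bm L^p(D),\bm W^{1,p}(D))_{[s]}=\bm W^{s,p}(D)$ (valid for $p\in(1,\infty)$ and $D$ having a locally Lipschitz boundary), producing $\|\bm u_0(\cdot,y)\|_{\bm W^{s,p}(D)}\lesssim\omega^{-1/2+s}$. I expect the only mildly subtle point to be bookkeeping the derivatives of the Hankel function in the difference $\Phi(\cdot,y,\kappa_{\rm s})-\Phi(\cdot,y,\kappa_{\rm p})$: naive term-by-term estimation works because the $\omega^{-2}$ prefactor is already enough to absorb the two derivatives, so no cancellation between the two wavenumbers needs to be exploited. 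The separation $d_0>0$ is what makes the entire argument transparent: without it, the singularity of $H_0^{(1)}$ at the origin would force a more delicate Calder\'on--Zygmund or Riesz potential analysis.
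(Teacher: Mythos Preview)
Your proposal is correct and follows essentially the same strategy as the paper: establish the endpoint bounds $\|\bm u_0(\cdot,y)\|_{\bm L^p(D)}\lesssim\omega^{-1/2}$ and $\|\bm u_0(\cdot,y)\|_{\bm W^{1,p}(D)}\lesssim\omega^{1/2}$ from the large-argument Hankel asymptotics (the paper records these pointwise bounds later as \eqref{d10}), then interpolate. The only cosmetic difference is that the paper invokes an interpolation inequality $\|\cdot\|_{\bm W^{s,p}}\lesssim\|\cdot\|_{\bm L^p}^{1-s}\|\cdot\|_{\bm W^{1,p}}^{s}$ directly, whereas you phrase it via complex interpolation of the Sobolev scale; both yield the same conclusion.
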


\begin{proof}
For any $y\in U$, it is easy to check that 
\begin{align*}
\|\bm u_0(\cdot,y)\|_{\bm L^p(D)}&=\|\bm G(\cdot,y,\omega)\bm a\|_{\bm L^p(D)}\lesssim\omega^{-\frac12},\\
\|\bm u_0(\cdot,y)\|_{\bm W^{1,p}(D)}&=\|\bm G(\cdot,y,\omega)\bm a\|_{\bm W^{1,p}(D)}\lesssim\omega^{\frac 12}.
\end{align*}
Utilizing the interpolation inequality \cite{L3}, we get
\begin{align*}
\|\bm u_0(\cdot,y)\|_{\bm
W^{s,p}(D)}\lesssim\|\bm u_0(\cdot,y)\|_{\bm L^p(D)}^{1-s}\|\bm u_0(\cdot,y)\|_{\bm
W^{1,p}(D)}^s\lesssim\omega^{-\frac12+s},
\end{align*}
which completes the proof.
\end{proof}

By Lemmas \ref{lm:K} and \ref{lm:u0}, we have for $s\in(\frac{2-m}2,\frac12)$ that
\begin{eqnarray*}\notag
\|K_{\omega}^{N+1}\bm{u}_0\|_{\bm H_{-1}^s(U)}\lesssim
\omega^{(-1+2s)(N+1)}\|\bm{u}_0(\cdot,y)\|_{\bm H^s_{-1}(D)}\lesssim\omega^{(-1+2s)(N+1)-\frac{1}{2}+s}\to
0
\end{eqnarray*}
as $N\to\infty$, where we use the fact 
\[
\|\bm u_0(\cdot,y)\|_{\bm
H^s_{-1}(D)}=\|(1+|\cdot|^2)^{-\frac12}(I-\Delta)^{\frac s2}\bm
u_0(\cdot,y)\|_{\bm L^2(D)}\lesssim\|\bm u_0(\cdot,y)\|_{\bm
H^s(D)}.
\] 
Combining the above estimate with (\ref{c4}) leads to
\begin{eqnarray*}
(I+K_{\omega})\sum_{j=0}^{\infty}\bm{u}_j=\bm{u}_0\quad\text{in}\quad\bm H_{-1}^s(U).
\end{eqnarray*}

Note also that $\bm{G}(\cdot,y,\omega)\in (L^2_{\rm
loc}\cap W_{\rm loc}^{1,p'})^{2\times 2}$ for any
$p'\in (1,2)$. Choosing 
$p'=2-\epsilon$ for sufficient small
$\epsilon>0$, we may follow the same proof as that of Theorem
\ref{tm:LS} and get $\bm{W}^{1,p'}(U)\hookrightarrow\bm H^s(U)$, which implies
that $\bm u_0(\cdot,y)\in\bm H^s(U)\hookrightarrow\bm W^{\gamma,q}(U)$
and $(I+K_{\omega})^{-1}\bm{u}_0=\bm{u}$ in $\bm W^{\gamma,q}(U)$. Hence, the
Born series converges to the unique solution $\bm u$ of \eqref{a1} in $\bm
W^{\gamma,q}(U)$ and 
\begin{eqnarray}\label{c8}
\bm{u}=\sum_{j=0}^{\infty}\bm{u}_j.
\end{eqnarray}
Moreover, 
\begin{align}\label{eq:uj}
\|\bm u-\sum_{j=0}^N\bm u_j\|_{\bm L^{\infty}(U)}&\lesssim \sum_{j=N+1}^\infty\|K_\omega^{j}\bm u_0\|_{\bm L^\infty(U)}\notag\\
&\le\sum_{j=N+1}^\infty\|K_\omega\|_{\mathcal{L}(\bm H_{-1}^s,\bm L^\infty(U))}\|K_\omega\|_{\mathcal{L}(\bm H_{-1}^s)}^{j-1}\|\bm u_0(\cdot,y)\|_{\bm H_{-1}^s(D)}\notag\\
&\lesssim \omega^{s+\epsilon+(-1+2s)N-\frac12+s}\to0
\end{align}
as $N\to\infty$, which implies that the equation \eqref{c8} also holds in $\bm
L^{\infty}(U)$.

\section{The inverse scattering problem}
\label{sec:5}

In this section, we study the inverse scattering problem which is to reconstruct
the micro-correlation strength $\phi$ of the random potential $\rho$. 

We consider the case $y=x$ and recall that the notations $\bm{u}^s(x,\omega,\bm{a})$ and $\bm{u}_j(x,\omega,\bm{a})$ stand for $\bm{u}^s(x,x,\omega,\bm{a})$ and $\bm{u}_j(x,x,\omega,\bm{a})$, respectively.
Then we rewrite \eqref{c8} in terms of the scattered field
\[
\bm u^s(x,\omega,{\bm a})={\bm u}_1(x,\omega,{\bm a})+\bm u_2(x,\omega,\bm a)+{\bm b}(x,\omega,{\bm a}),
\]
where 
\[
\bm b(x,\omega,{\bm a})=\sum_{j=3}^\infty\bm u_j(x,\omega,{\bm a}). 
\]

\subsection{The analysis of $\bm{u}_1$}

This subsection is devoted to the analysis of the leading term $\bm{u}_1$.
Explicitly, it is given by
\begin{eqnarray}\label{d1}
\bm{u}_1(x,\omega,\bm{a})=-\int_D\rho(z)\bm{G}(x,z,\omega)^2\bm{a}dz.
\end{eqnarray}

\begin{theorem}\label{thm3}
Let $\rho$ satisfy Assumption \ref{as:rho} and $U\subset\mathbb R^2\backslash\overline{D}$ be a bounded and convex domain having a locally Lipschitz boundary and a positive distance to $D$.
Then for all $x\in U$, it holds 
\begin{eqnarray*}
\lim_{Q\to\infty}\frac{1}{Q-1}\int_1^Q\omega^{m+2}\sum_{j=1}^2|\bm{u}_1
(x,\omega,\bm{a}_j)|^2d\omega
=\frac{c_{\rm s}^{6-m}+c_{\rm p}^{6-m}}{2^{m+6}\pi^2}\int_{\mathbb R^2}\frac{1}{|x-\zeta|^{2}}\phi(\zeta)d\zeta\quad a.s.,
\end{eqnarray*}
where $\bm{a}_1$ and $\bm{a}_2$ are two orthonormal vectors in $\mathbb{R}^2$. 
\end{theorem}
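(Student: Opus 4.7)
The plan is to split the frequency average into an expected-value contribution and a stochastic fluctuation. The expectation will produce the stated limit via high-frequency asymptotics of the Green tensor combined with the principal symbol of the covariance operator of $\rho$, and the fluctuation will be shown to vanish almost surely by a Chebyshev--Borel--Cantelli argument applied to its variance.

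Starting from \eqref{d1} and using $\sum_{j=1}^{2}\bm{a}_j\bm{a}_j^{\top}=\bm{I}$, I would rewrite
\begin{eqnarray*}
\sum_{j=1}^{2}|\bm{u}_1(x,\omega,\bm{a}_j)|^2
=\int_D\int_D \rho(z)\rho(z')\,{\rm tr}\!\left(\bm{G}(x,z,\omega)^2\,\overline{\bm{G}(x,z',\omega)^2}\right)dz\,dz'.
\end{eqnarray*}
Substituting the stationary-phase expansion of the Hankel function into \eqref{a4} represents $\bm{G}(x,z,\omega)$ as a sum of a compressional component with phase ${\rm e}^{{\rm i}\kappa_{\rm p}|x-z|}$ and a shear component with phase ${\rm e}^{{\rm i}\kappa_{\rm s}|x-z|}$, each of order $\omega^{-1/2}|x-z|^{-1/2}$ and carrying a longitudinal or tangential polarization projector. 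Squaring and then conjugate-multiplying produces sixteen oscillatory kernels whose net phase has the form $[\alpha\kappa_{\rm p}+\beta\kappa_{\rm s}]|x-z|-[\alpha'\kappa_{\rm p}+\beta'\kappa_{\rm s}]|x-z'|$ with $\alpha+\beta=\alpha'+\beta'=2$.

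For the mean, I would replace $\mathbb{E}[\rho(z)\rho(z')]$ by its leading kernel $\phi(z)\mathcal{K}_{f_m}(z,z')$ from Lemma \ref{le2}, pass to Fourier variables so that the covariance is encoded by the symbol $\phi(z)|\xi|^{-m}$, and then rescale $\xi\mapsto\omega\xi$. Each of the sixteen terms becomes a Fourier integral operator to which the principle of stationary phase in $(h,\xi)$ with $h=z-z'$ applies. Only the pp--pp and ss--ss contributions possess stationary points, at $|\xi|=c_{\rm p}$ and $|\xi|=c_{\rm s}$ respectively, and after integration they contribute $c_{\rm p}^{6-m}/(2^{m+6}\pi^2)$ and $c_{\rm s}^{6-m}/(2^{m+6}\pi^2)$ together with the geometric factor $|x-\zeta|^{-2}$ coming from the product of amplitudes $|x-z|^{-1/2}|x-z'|^{-1/2}$ evaluated on the diagonal $z=z'=\zeta$. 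The twelve off-resonant cross terms retain a residual oscillation in $\omega$ and are annihilated under the frequency average $\frac{1}{Q-1}\int_1^Q \cdot\,d\omega$ by the Riemann--Lebesgue lemma together with Van der Corput bounds.

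To promote the mean statement to an almost sure one, I would compute the variance using Isserlis' theorem, obtaining a quadruple integral of two copies of $\mathcal{K}_\rho$ against four Green tensors at frequencies $\omega,\omega'\in[1,Q]$, and show by the same stationary-phase method that it decays polynomially in $Q$. Chebyshev's inequality along a subsequence $Q_k=k^{1+\eta}$, together with a monotone interpolation argument to fill the gaps between the $Q_k$, then yields the almost sure limit. The hypothesis $m>\frac53$ enters precisely to ensure that the diagonal singularities produced by the two fractional covariance kernels remain integrable when paired against four copies of $\bm{G}$. The main obstacle will be the microlocal bookkeeping in the mean computation: tracking the longitudinal and tangential projectors through the squaring, conjugation, and trace, and verifying that summing over $\bm{a}_1,\bm{a}_2$ yields precisely the clean isotropic constant $c_{\rm s}^{6-m}+c_{\rm p}^{6-m}$ uniformly for $x$ in compact subsets of $U$.
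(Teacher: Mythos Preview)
Your plan is essentially the paper's: split into expectation plus fluctuation, extract the mean via high-frequency asymptotics of $\bm{G}$ combined with the principal symbol $\phi|\xi|^{-m}$, and control the fluctuation through Isserlis/Wick plus a law-of-large-numbers argument in the frequency variable. Two points of comparison and one correction are worth noting.

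\textbf{Execution differences.} The paper does not work with the full asymptotic expansion of $\bm{G}$; it first replaces $\bm{G}$ by a finite truncation $\bm{G}^{(2)}$ and bounds $|\bm{u}_1-\bm{u}_1^{(2)}|\lesssim\omega^{-3}$ deterministically (Lemma~\ref{lm:5.2}), so all probabilistic analysis is done on the polynomial-amplitude object $\bm{u}_1^{(2)}$. For the fluctuation it does not run Borel--Cantelli directly: it proves a \emph{two-frequency} decorrelation estimate (Corollary~\ref{coro:u2}), namely $|\mathbb{E}[\bm{u}_1^{(2)}(\omega_1)\cdot\overline{\bm{u}_1^{(2)}(\omega_2)}]|\lesssim (\omega_1\omega_2)^{-1}(\omega_1+\omega_2)^{-m}(1+|\omega_1-\omega_2|)^{-M}$, combines this with the Gaussianity of the \emph{linear} quantity $\bm{u}_1^{(2)}$ (so that $\mathbb{E}[Y(\omega_1)Y(\omega_2)]$ factors via Wick into squares of such covariances), and then invokes a ready-made ergodic lemma (Lemma~2.4 of \cite{LHL}). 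Your Chebyshev--Borel--Cantelli along $Q_k=k^{1+\eta}$ with monotone interpolation is exactly how that lemma is proved, so there is no real divergence. Also, your statement that only pp--pp and ss--ss ``possess stationary points'' is slightly off: all sixteen terms share the stationary set $z=z'$, but the mixed terms either carry a residual phase $e^{\mathrm{i}(c_1-c_2)\omega|x-\zeta|}$ in the leading contribution of Lemma~\ref{lemmaadd1} (killed by frequency averaging) or have vanishing amplitude on the diagonal by the orthogonality $P_TP_L=0$ of the shear/compressional projectors.

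\textbf{Correction.} The hypothesis $m>\tfrac{5}{3}$ plays \emph{no} role in this theorem. Theorem~\ref{thm3} holds under Assumption~\ref{as:rho} alone, i.e.\ for all $m\in(1,2]$; the diagonal singularity of $\mathcal{K}_\rho$ here is at worst $|z-z'|^{m-2}$ (or logarithmic), which is integrable against smooth amplitudes with no further restriction. The constraint $m>\tfrac{5}{3}$ enters only in the analysis of $\bm{u}_2$ and $\bm{b}$ (Sections~\ref{sec:5.2}--\ref{sec:5.3}), where it is needed to find $s\in(\tfrac{2-m}{2},\tfrac{4-m}{12})$ making the operator bounds for $K_\omega$ strong enough. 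You should remove that sentence from your argument for this theorem.
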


Before giving the proof of Theorem \ref{thm3}, we first introduce the truncation
of the Green tensor $\bm G$ and some a priori estimates. 
Let $H_{n}^{(1)}$ be the Hankel function of the first kind with order $n$,
which has the following asymptotic expansion (cf. \cite{AS}): 
\begin{align}\label{d3}
H_{n}^{(1)}(c)=\sum_{j=0}^N b_j^{(n)}c^{-(j+\frac12)}e^{{\rm i}(c-\frac{n\pi}2)}+O(|c|^{-N-\frac{3}{2}}),\quad c\in\mathbb{C},~|c|\to\infty,
\end{align}
where $b_0^{(n)}=\frac{1+\rm i}{\sqrt{\pi}}$ and
\[
b_j^{(n)}=\frac{(1+{\rm i}){\rm i}^j}{\sqrt{\pi}8^jj!}\prod_{l=1}^j\left(4n^2-(2l-1)^2\right),\quad j\ge1. 
\]
For the sufficiently large argument $c=\kappa |z|$, define the truncated Hankel
function 
\begin{eqnarray*}
H_{n,N}^{(1)}(c):=\sum_{j=0}^N b_j^{(n)}c^{-(j+\frac12)}e^{{\rm i}(c-\frac{n\pi}2)}.
\end{eqnarray*}
It follows from (\ref{d3}) that
\begin{eqnarray}\label{d5}
&&\big|H_n^{(1)}(\kappa |z|)-H_{n,N}^{(1)}(\kappa |z|)\big|\lesssim \kappa^{-N-\frac{3}{2}}|z|^{-N-\frac{3}{2}},\\\label{addd5}
&&\big|\nabla_{z}\big[H_n^{(1)}(\kappa |z|)-H_{n,N}^{(1)}(\kappa
|z|)\big]\big|\lesssim\kappa^{-N-\frac{1}{2}}|z|^{-N-\frac{3}{2}}.
\end{eqnarray}
By \eqref{a4}, a straightforward calculation shows that the Green tensor
$\bm{G}$ can be rewritten as 
\begin{align}\label{d6}
\bm{G}(x,y,\omega)&=\left\{\frac{{\rm i}}{4\mu}H_0^{(1)}(\kappa_{\rm
s}|x-y|)-\frac{{\rm i}}{4\omega^2}\frac{1}{|x-y|}\big[\kappa_{\rm
s}H_{1}^{(1)}(\kappa_{\rm s}|x-y|)-\kappa_{\rm
p}H_{1}^{(1)}(\kappa_{\rm p}|x-y|)\big]\right\}\bm{I}\nonumber\\
&\quad+\frac{{\rm
i}}{4\omega^2}\frac{1}{|x-y|^2}\big[\kappa_{\rm
s}^2H_{2}^{(1)}(\kappa_{\rm s}|x-y|)-\kappa_{\rm
p}^2H_{2}^{(1)}(\kappa_{\rm p}|x-y|)\big](x-y)
(x-y)^\top,
\end{align}
where $x-y=(x_1-y_1,x_2-y_2)^\top$. Denote by $\bm{G}^{(N)}$ the truncation
of the Green tensor $\bm G$. Explicitly, 
\begin{align}\label{d8}
\bm{G}^{(N)}(x,y,\omega)&=\left\{\frac{{\rm i}}{4\mu}H_{0,N}^{(1)}(\kappa_{\rm
s}|x-y|)-\frac{{\rm i}}{4\omega^2}\frac{1}{|x-y|}\big[\kappa_{\rm
s}H_{1,N}^{(1)}(\kappa_{\rm s}|x-y|)-\kappa_{\rm
p}H_{1,N}^{(1)}(\kappa_{\rm p}|x-y|)\big]\right\}\bm{I}\nonumber\\
&\quad +\frac{{\rm
i}}{4\omega^2}\frac{1}{|x-y|^2}\big[\kappa_{\rm
s}^2H_{2,N}^{(1)}(\kappa_{\rm s}|x-y|)-\kappa_{\rm
p}^2H_{2,N}^{(1)}(\kappa_{\rm p}|x-y|)\big](x-y)
(x-y)^\top.
\end{align}
Let $G_{ij}$ and $G^{(N)}_{ij}$ be the $(i,j)$-entry 
of $\bm{G}$ and $\bm{G}^{(N)}$, respectively. Using 
(\ref{d3})--(\ref{addd5}), we have the following asymptotic estimates 
\begin{equation}\label{d10}
\begin{aligned}
&|G_{ij}(x,y,\omega)|\lesssim
\omega^{-\frac{1}{2}}|x-y|^{-\frac{1}{2}},\quad\;\;
|\nabla_{x}G_{ij}(x,y,\omega)|\lesssim
\omega^{\frac{1}{2}}|x-y|^{-\frac{1}{2}},\\
&|G^{(N)}_{ij}(x,y,\omega)|\lesssim
\omega^{-\frac{1}{2}}|x-y|^{-\frac{1}{2}},\quad |\nabla_{
x}G^{(N)}_{ij}(x,y,\omega)|\lesssim
\omega^{\frac{1}{2}}|x-y|^{-\frac{1}{2}},\\
&|G_{ij}(x,y,\omega)-G^{(N)}_{ij}(x,y,\omega)|\lesssim \omega^{-N-\frac{3}{2}}|x-y|^{-N-\frac{3}{2}},\\
&|\nabla_{ x}(G_{ij}(x,y,\omega)-G^{(N)}_{ij}(x,y,\omega))|\lesssim \omega^{-N-\frac{1}{2}}|x-y|^{-N-\frac{3}{2}}.
\end{aligned}
\end{equation} 
Replacing $\bm{G}$ by $\bm{G}^{(2)}$ in (\ref{d1}), we define 
\begin{eqnarray}\label{d15}
\bm{u}_1^{(2)}(x,\omega,\bm{a})=-\int_{D}\rho(z)\bm{G}^{(2)}(x,z,\omega)^2\bm{a}dz, \quad  x\in U.
\end{eqnarray}
For the difference $\bm{u}_1-\bm{u}_1^{(2)}$, we have the following estimate.

\begin{lemma}\label{lm:5.2}
Under the assumptions in Theorem \ref{thm3}, it holds for $x\in U$ that 
\begin{eqnarray*}
|\bm{u}_1(x,\omega,\bm{a})-\bm{u}_1^{(2)}(x,\omega,\bm{a})|\leq
C\omega^{-3}\quad a.s.,
\end{eqnarray*}
where the constant $C$ depends on the distance between $U$ and $D$.
\end{lemma}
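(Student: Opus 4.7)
The plan is to express the difference $\bm{u}_1 - \bm{u}_1^{(2)}$ as a distributional pairing against $\rho$ and bound it by duality, since $\rho$ is not a function for $m \in (1,2]$ and the pointwise integrand bounds cannot be integrated directly against $\rho(z)\,dz$.

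First, I would factor
\[
\bm{G}(x,z,\omega)^2 - \bm{G}^{(2)}(x,z,\omega)^2 = (\bm{G} - \bm{G}^{(2)})\bm{G} + \bm{G}^{(2)}(\bm{G} - \bm{G}^{(2)})
\]
and apply the asymptotic estimates in (\ref{d10}) with $N = 2$ entrywise. Since $|x-z|$ is uniformly bounded below by $d_0 > 0$, the distance between $U$ and $D$, combining $|G_{ij} - G_{ij}^{(2)}| \lesssim \omega^{-7/2}$ with $|G_{ij}| + |G_{ij}^{(2)}| \lesssim \omega^{-1/2}$ yields
\[
\bigl|[\bm{G}^2 - (\bm{G}^{(2)})^2]_{ij}(x,z,\omega)\bigr| \lesssim \omega^{-4},
\]
while the product rule combined with $|\nabla_z G_{ij}| \lesssim \omega^{1/2}$ and $|\nabla_z(G_{ij} - G_{ij}^{(2)})| \lesssim \omega^{-5/2}$ gives
\[
\bigl|\nabla_z [\bm{G}^2 - (\bm{G}^{(2)})^2]_{ij}(x,z,\omega)\bigr| \lesssim \omega^{-5/2}\omega^{-1/2} + \omega^{-7/2}\omega^{1/2} \lesssim \omega^{-3},
\]
uniformly in $x \in U$ and $z \in D$, with constants depending only on $d_0$.

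Next, since $\rho \in W^{-\gamma,p}_0(D)$ almost surely for any $\gamma > (2-m)/2$ and any $p \in (1,\infty)$ by Lemma \ref{le1}, I would set $F_{x,\omega}(z) := \bm{G}(x,z,\omega)^2 - \bm{G}^{(2)}(x,z,\omega)^2$, which is smooth in $z \in D$ for fixed $x \in U$, and interpret
\[
\bm{u}_1(x,\omega,\bm{a}) - \bm{u}_1^{(2)}(x,\omega,\bm{a}) = -\langle \rho, F_{x,\omega}\bm{a}\rangle.
\]
Duality then yields
\[
\bigl|\bm{u}_1(x,\omega,\bm{a}) - \bm{u}_1^{(2)}(x,\omega,\bm{a})\bigr| \lesssim \|\rho\|_{W^{-\gamma,p}(D)}\,\|F_{x,\omega}\bm{a}\|_{\bm{W}^{\gamma,p'}(D)}
\]
with $1/p + 1/p' = 1$, and the pointwise bounds together with the boundedness of $D$ give $\|F_{x,\omega}\bm{a}\|_{\bm{L}^{p'}(D)} \lesssim \omega^{-4}$ and $\|F_{x,\omega}\bm{a}\|_{\bm{W}^{1,p'}(D)} \lesssim \omega^{-3}$. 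The embedding $\bm{W}^{1,p'}(D) \hookrightarrow \bm{W}^{\gamma,p'}(D)$ (or the interpolation argument already used in Lemma \ref{lm:u0}) then produces $\|F_{x,\omega}\bm{a}\|_{\bm{W}^{\gamma,p'}(D)} \lesssim \omega^{-3}$, which combined with the almost sure finiteness of $\|\rho\|_{W^{-\gamma,p}(D)}$ delivers the claimed bound.

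The main subtlety is the distributional nature of $\rho$: one must pick the regularity index $\gamma \in \bigl((2-m)/2, 1\bigr)$ compatibly with Lemma \ref{le1} and recognize that $F_{x,\omega}$ is smooth on $D$ because $x$ is separated from $D$, so the pairing is well defined. Once this framework is fixed, the proof reduces to a routine product-rule computation with the truncation bounds in (\ref{d10}), the positive distance $d_0$ absorbing all spatial factors into a constant.
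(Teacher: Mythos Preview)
Your proposal is correct and follows essentially the same route as the paper: factor $\bm G^2-(\bm G^{(2)})^2$, pair the resulting smooth test function against the distribution $\rho$ by duality, and control the Sobolev norm of the test function via the product rule and the truncation bounds \eqref{d10}. The only cosmetic difference is that the paper fixes the dual pair $H_0^{-1}(D)\times H^1(D)$ (using $\rho\in W^{\frac{m-2}{2}-\epsilon,2}_0(D)\subset H_0^{-1}(D)$), whereas you work with a general $W^{-\gamma,p}\times W^{\gamma,p'}$ pairing; both choices give the same $\omega^{-3}$ bound.
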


\begin{proof}
Using (\ref{d1}) and (\ref{d15}), for $x,y\in U$ and $z\in D$, we obtain
\begin{align*}
|\bm{u}_1(x,\omega,\bm{a})-\bm{u}_1^{(2)}(x,\omega,\bm{a})|
&\leq\left|\int_D\rho(z)(\bm{G}(x,z,\omega)-\bm{G}^{(2)}(x,z,\omega))
\bm{G}(x,z, \omega)\bm{a}dz\right|\\
&\quad +\left|\int_D\rho(z)\bm{G}^{(2)}(x,z,\omega)(\bm{G}(x,z,\omega)-
\bm{G}^{(2)}(x,z,\omega))\bm{a}dz\right|\\
&=:{\rm J}_1+{\rm J}_2.
\end{align*}
For ${\rm J}_1$, we have from \eqref{d10} that 
\begin{align*}
{\rm J}_1&\leq
\|\rho\|_{H_0^{-1}(D)}\|(\bm{G}(x,\cdot,\omega)-\bm{G}^{(2)}(x,\cdot,
\omega))\bm{G}(x,\cdot,\omega)\bm{a}\|_{\bm H^{1}(D)}\\
&\leq\|\rho\|_{H_0^{-1}(D)}\big[\|\nabla(\bm{G}(x,\cdot,\omega)-
\bm{G}^{(2)}(x, \cdot,\omega))\|_{(L^2(D))^{2\times
2}}\|\bm{G}(x,\cdot,\omega)\bm{a}\|_{\bm L^{\infty}(D)}\\
&\quad +\|\bm{G}(x,\cdot,\omega)-\bm{G}^{(2)}(x,\cdot,\omega)\|_{(L^2(D))^{2\times
2}}\left(\|\bm{G}(x,\cdot,\omega)\bm{a}\|_{\bm L^{\infty}(D)}+\|\nabla\bm{G}(x,\cdot,\omega)\bm{a}\|_{\bm L^{\infty}(D)}\right)\big]\\
&\lesssim\big[\omega^{-\frac{5}{2}}\omega^{-\frac{1}{2}}+\omega^{-\frac{7
}{2}}(\omega^{-\frac{1}{2}}+\omega^{\frac{1}{2}})\big]\Big(\int_{D}|x-z|^{-7}
dz\Big)^{\frac12}\sup_{z\in D}|x-z|^{-\frac12}\\
&\lesssim  \omega^{-3},
\end{align*}
where we use the facts $\rho\in W^{\frac{m-2}2-\epsilon,p}_0(D)\subset H_0^{-1}(D)$ by choosing a sufficiently small $\epsilon>0$ and that there is a positive distance between $U$ and $D$.
Similarly, we can prove that ${\rm J}_2\lesssim\omega^{-3}$, which completes the proof.
\end{proof}

Let $\bm{u}_1^{(2)}=\big(u_{1,1}^{(2)},u_{1,2}^{(2)}\big)^\top$, where
\begin{eqnarray*}
u_{1,k}^{(2)}=-\sum_{i,j=1}^2\int_D\rho(z)G_{ki}^{(2)}(x,z,\omega)G_{ij}^{(2)}(x,z,\omega)a_jdz,\quad k=1,2
\end{eqnarray*}
and $a_j$ is the component of the vector $\bm{a}$. It follows from a
straightforward calculation that 
\begin{eqnarray}\notag
&&\mathbb{E}\left[\bm{u}_1^{(2)}(x,\omega_1,\bm{a})\cdot\overline{\bm{u}_1^{(2)}(x,\omega_2,\bm{a})}\right]
=\sum_{k,i,j,\tilde{i},\tilde{j}=1}^2a_ja_{\tilde{j}}\times\\\label{d19}
&&\int_{\mathbb
R^2}\int_{\mathbb{R}^2}G_{ki}^{(2)}(x,z,\omega_1)G_{ij}^{(2)}(x,z,
\omega_1)\overline{G_{k\tilde{i}}^{(2)}(x,z',\omega_2)}\overline{G_{\tilde{i}\tilde{j}}^{(2)}(x,z',\omega_2)}\mathbb{E}
[\rho(z)\rho(z')]dzdz',
\end{eqnarray}
where the entries
$G_{kl}^{(2)}$ in $\bm G^{(2)}$ can be expressed by
\begin{align*}
G_{kl}^{(2)}(x,z,\omega)&=\frac{\rm i}4\sum_{j=0}^2\Bigg[\frac{b_j^{(0)}c_{\rm s}^{-j+\frac32}\delta_{kl}}{\omega^{j+\frac12}|x-z|^{j+\frac12}}+\frac{{\rm i}b_j^{(1)}c_{\rm s}^{-j+\frac12}\delta_{kl}}{\omega^{j+\frac32}|x-z|^{j+\frac32}}-\frac{b_j^{(2)}c_{\rm s}^{-j+\frac32}(x_k-z_k)(x_l-z_l)}{\omega^{j+\frac12}|x-z|^{j+\frac52}}\Bigg]e^{{\rm i}c_{\rm s}\omega|x-z|}\\
&\quad -\frac{\rm i}4\sum_{j=0}^2\bigg[\frac{{\rm i}b_j^{(1)}c_{\rm p}^{-j+\frac12}\delta_{kl}}{\omega^{j+\frac32}|x-z|^{j+\frac32}}-\frac{b_j^{(2)}c_{\rm p}^{-j+\frac32}(x_k-z_k)(x_l-z_l)}{\omega^{j+\frac12}|x-z|^{j+\frac52}}\bigg]e^{{\rm i}c_{\rm p}\omega|x-z|}
\end{align*}
and $\delta_{kl}$ is the Kronecker delta function which equals to 1 when $k=l$
and vanishes when $k\not= l$. Substituting the expression of
$G_{kl}^{(2)}$ into (\ref{d19}), we see that
$\mathbb{E}(\bm{u}_1^{(2)}(x,\omega_1,\bm{a})\cdot\overline{\bm{u}_1^{(2)}(x,
\omega_2,\bm{a})})$ is a linear combination of the following type of integral
\begin{eqnarray*}
I_2(x,\omega_1,\omega_2):=\int_{\mathbb R^2}\int_{\mathbb R^2}e^{{\rm
i}(c_1\omega_1|x-z|-c_2\omega_2|x-z'|)}F_2(z,z',x)\mathbb{E}
[\rho(z)\rho(z')]dzdz',
\end{eqnarray*}
where $c_1,c_2\in\{2c_{\rm s},c_{\rm s}+c_{\rm p} ,2c_{\rm p}\}$ and 
\begin{eqnarray*}
F_2(z,z',x):=\frac{(x_1-z_1)^{d_{11}}(x_2-z_2)^{d_{12}}(x_1-z'_1)^{d_{21}}
(x_2-z'_2)^{d_{22}}}{|x-z|^{d_1}|x-z'|^{d_2}}. 
\end{eqnarray*}

To estimate the integral $I_2$ in $\mathbb R^2$, we may consider the following general integral in $\mathbb R^d$:
\begin{equation}\label{eq:Id}
I_d(x,\omega_1,\omega_2):=\int_{\mathbb R^d}\int_{\mathbb R^d}e^{{\rm
i}(c_1\omega_1|x-z|-c_2\omega_2|x-z'|)}F_d(z,z',x)\mathbb{E}[\rho(z)\rho(z')]dzdz',
\end{equation}
where $c_1,c_2\in\{2c_{\rm s},c_{\rm s}+c_{\rm p} ,2c_{\rm p}\}$ and 
\begin{eqnarray*}
F_d(z,z',x):=\frac{(x_1-z_1)^{d_{11}}\cdots(x_d-z_d)^{d_{1d}}(x_1-z'_1)^{d_{21}}\cdots(x_d-z'_d)^{d_{2d}}}{|x-z|^{d_1}|x-z'|^{d_2}}.
\end{eqnarray*}
The following estimate holds for $I_d$, whose proof is technical and is given in Appendix \ref{app:1} to avoid a possible distraction from the presentation of the main results. 

\begin{lemma}\label{lemmaadd1}
For $\omega_1,\omega_2\geq 1$, the following estimate holds uniformly for
$x\in U$:
\begin{eqnarray}\label{d23}
|I_d(x,\omega_1,\omega_2)|\leq
C_M(\omega_1+\omega_2)^{-m}(1+|\omega_1-\omega_2|)^{-M},
\end{eqnarray}
where $M\in\mathbb{N}$ is an arbitrary integer and the positive constant $C_M$
depends on $M$. Moreover, if $\omega_1=\omega_2=\omega$, then the following
identity holds:
\begin{eqnarray}\label{d25}
I_d(x,\omega,\omega)=R_d(x,\omega)\omega^{-m}+O(\omega^{
-(m+1)}),
\end{eqnarray}
where
\begin{eqnarray*}
R_d(x,\omega)=\frac{2^{m}}{(c_1+c_2)^{m}}\int_{\mathbb R^d}e^{{\rm
i}(c_1-c_2)\omega|x-\zeta|}\frac{(x_1-\zeta_1)^{d_{11}+d_{21}}\cdots(x_d-\zeta_d)^{d_{1d}+d_{2d}}}{|x-\zeta|^{d_1+d_2}}\phi(\zeta)d\zeta.
\end{eqnarray*}

\end{lemma}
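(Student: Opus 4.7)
The plan is to substitute the covariance kernel of $\rho$ into $I_d$, perform a symmetric change of variables that disentangles the two frequency scales, and reduce the inner integral to the Fourier transform of a homogeneous distribution.

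First, I would write $\mathbb{E}[\rho(z)\rho(z')]=\mathcal{K}_\rho(z,z')$ and, using the decomposition $\mathcal{K}_\rho(z,z')=\phi(z)C_1(m,d)|z-z'|^{m-d}+h(z,z')$ from the end of Section~\ref{sec:2}, split $I_d$ into a principal piece and a smooth remainder. Because $h$ is smooth on $D\times D$ and $x\in U$ has positive distance from $D$, the $h$-contribution factors into two decoupled oscillatory integrals whose phases $|x-z|$ and $|x-z'|$ are non-stationary on $D$ with unit gradient; repeated integration by parts then shows this piece is $O(\omega_1^{-N}+\omega_2^{-N})$ for any $N$, hence absorbed into both \eqref{d23} and \eqref{d25}.

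Next, for the principal piece I would introduce the symmetric variables $\zeta=(z+z')/2$, $y=z-z'$, so that $|z-z'|^{m-d}=|y|^{m-d}$ is unchanged, and Taylor-expand the distance functions:
\begin{equation*}
c_1\omega_1|x-z|-c_2\omega_2|x-z'|=(c_1\omega_1-c_2\omega_2)|x-\zeta|-\tfrac{1}{2}(c_1\omega_1+c_2\omega_2)\hat n(\zeta)\cdot y+r(\zeta,y),
\end{equation*}
with $\hat n(\zeta)=(x-\zeta)/|x-\zeta|$ and $|r(\zeta,y)|\lesssim(\omega_1+\omega_2)|y|^2$ uniformly for $\zeta\in D$. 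Freezing $F_d(z,z',x)$ at $y=0$ and linearizing the phase, the $y$-integral becomes the Fourier transform of the tempered distribution $|y|^{m-d}$ evaluated at $\xi=-\tfrac12(c_1\omega_1+c_2\omega_2)\hat n(\zeta)$, which equals a fixed constant times $\bigl[\tfrac12(c_1\omega_1+c_2\omega_2)\bigr]^{-m}$. The constants multiply so that, at $\omega_1=\omega_2=\omega$, the remaining $\zeta$-integral reproduces exactly $R_d(x,\omega)\omega^{-m}$; for general $\omega_1,\omega_2$ one obtains $(\omega_1+\omega_2)^{-m}$ times a $\zeta$-integral with phase $(c_1\omega_1-c_2\omega_2)|x-\zeta|$.

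For the decay in $|\omega_1-\omega_2|$, note that $\nabla_\zeta|x-\zeta|=-\hat n(\zeta)$ is a unit vector, so the $\zeta$-phase is non-stationary with gradient $|c_1\omega_1-c_2\omega_2|$ uniformly on $\operatorname{supp}\phi$; integrating by parts $M$ times against this phase and using $\phi\in C_0^\infty(D)$ to discard boundary terms yields the $(1+|\omega_1-\omega_2|)^{-M}$ factor in \eqref{d23}. The $O(\omega^{-(m+1)})$ correction in \eqref{d25} comes from the quadratic Taylor remainder $r$ and the first-order expansion of $F_d(z,z',x)-F_d(\zeta,\zeta,x)$ in $y$: each additional power of $|y|$ brought down from these expansions yields an additional factor $(\omega_1+\omega_2)^{-1}$ after the Fourier evaluation, since $|y|^{m-d}|y|^k$ transforms to $|\xi|^{-m-k}$. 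The main obstacle is that $|y|^{m-d}$ is not globally integrable, so the $y$-integral must be interpreted as a tempered-distribution pairing; I would introduce a smooth cutoff at scale $\operatorname{diam}(D)$, estimate the tail via non-stationary phase in $y$ alone, and handle the exceptional logarithmic case from Lemma~\ref{le2}(ii) by the parallel Fourier computation for $|y|^{m-d}\ln|y|$, which only introduces factors of $\ln\omega$ that are absorbed into the $O(\omega^{-(m+1)})$ remainder.
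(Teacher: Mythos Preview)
Your approach is genuinely different from the paper's, and the leading-order heuristic is correct, but there is a real gap in the remainder analysis. The problem is that the quadratic phase remainder $r(\zeta,y)$ is not small: it satisfies $|r|\lesssim(\omega_1+\omega_2)|y|^2$, so $e^{{\rm i}r}$ carries the large parameter. Your justification ``each additional power of $|y|$ brought down from these expansions yields an additional factor $(\omega_1+\omega_2)^{-1}$'' applies to a single term of the formal series $e^{{\rm i}r}=\sum_k({\rm i}r)^k/k!$, but the $k$-th term also carries an explicit factor $(\omega_1+\omega_2)^k$, so termwise you only recover $(\omega_1+\omega_2)^{-m}$ at every order, with no gain; the series does not sum to an $O(\omega^{-(m+1)})$ bound without further work. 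To make this rigorous you would need either a dyadic splitting of the $y$-integral at scale $|y|\sim(\omega_1+\omega_2)^{-1}$ (using non-stationary phase in $y$ on the outer shells, which is delicate because the amplitude $|y|^{m-d}$ is singular and the phase gradient $\nabla_y\Psi$ involves $r$ as well), or to invoke a stationary-phase lemma for conormal amplitudes. A second, related gap: for \eqref{d23} you must control the full remainder uniformly in both $\omega_1,\omega_2$ \emph{and} survive the $M$-fold integration by parts in $\zeta$; this requires that all the correction terms be smooth and compactly supported in $\zeta$ with bounds independent of $\omega$, which your sketch does not establish.

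The paper sidesteps this entirely by working on the symbol side. It writes $\mathcal{K}_\rho(z,z')=(2\pi)^{-d}\int e^{{\rm i}(z-z')\cdot\xi}\sigma(z,\xi)\,d\xi$ with $\sigma\in S^{-m}_{1,0}$ and then performs an \emph{exact} change of variables $\tau:(z,z',x)\mapsto(g,h,x)$ designed so that $g_1=\tfrac12(|x-z|-|x-z'|)$ and $h_1=\tfrac12(|x-z|+|x-z'|)$; no Taylor expansion of the phase is ever taken. Because $\mathcal{K}_\rho$ (times a cutoff) is a conormal distribution along $\{z=z'\}$, it transforms covariantly: in the new coordinates it is again the inverse Fourier transform in $g$ of a symbol $\sigma_2(h,x,\xi)\in S^{-m}_{1,0}$, whose principal part is computed explicitly. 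The integral $I_d$ then reduces \emph{exactly} to $\int e^{{\rm i}(c_1\omega_1-c_2\omega_2)h_1}\sigma_2(h,x,-(c_1\omega_1+c_2\omega_2)e_1)\,dh$, from which both \eqref{d23} (by $\partial_{h_1}^M$ integration by parts and the symbol bound $|\sigma_2|\lesssim(\omega_1+\omega_2)^{-m}$) and \eqref{d25} (by reading off the principal symbol and using $\sigma_2-\sigma_2^p\in S^{-m-1}_{1,0}$) follow in one line. In short, the paper trades your real-space Taylor expansion for the pull-back formula for conormal symbols, which gives the error control for free; your route can be made to work, but the step you flagged as ``the main obstacle'' is not the compact support of $|y|^{m-d}$, it is the $\omega$-dependence of the phase remainder.
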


\begin{corollary}\label{coro:u2}
For $\omega_1,\omega_2\ge1$, the following estimates hold  uniformly for $x\in
U$:
\begin{align}\label{d24}
|\mathbb{E}(\bm{u}_1^{(2)}(x,\omega_1,
\bm{a})\cdot\overline{\bm{u}_1^{(2)}(x,\omega_2,\bm{a})})|&\leq 
C_M(\omega_1\omega_2)^{-1}(\omega_1+\omega_2)^{-m}
(1+|\omega_1-\omega_2|)^{-M},\\\label{d100}
|\mathbb{E}(\bm{u}_1^{(2)}(x,\omega_1,\bm{a})\cdot\bm{u}_1^{(2)}(x,
\omega_2,\bm{a}))|&\leq
C_M(\omega_1\omega_2)^{-1}(\omega_1+\omega_2)^{-M}
(1+|\omega_1-\omega_2|)^{-m},
\end{align}
where $M\in\mathbb{N}$ is arbitrary and $C_M$ is a constant depending on
$M$.
\end{corollary}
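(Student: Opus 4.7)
The plan is to reduce both estimates to Lemma~\ref{lemmaadd1} by substituting the explicit formula for $G_{kl}^{(2)}$ displayed just before \eqref{d19} into the definition \eqref{d15} of $\bm u_1^{(2)}$. Each entry $G_{kl}^{(2)}(x,z,\omega)$ is a finite linear combination of terms of the form $\omega^{-(j+\frac12)}A_j(x,z)e^{{\rm i}c\omega|x-z|}$ with $c\in\{c_{\rm s},c_{\rm p}\}$, $j\in\{0,1,2\}$, and amplitude $A_j$ smooth on $U\times D$. Consequently each component of $\bm u_1^{(2)}(x,\omega,\bm a)$ is a finite linear combination of integrals
\[
\omega^{-(j_1+j_2+1)}\int_D \rho(z)\,e^{{\rm i}\tilde c\,\omega|x-z|}\,\tilde A(x,z)\,dz,\qquad \tilde c\in\{2c_{\rm s},c_{\rm s}+c_{\rm p},2c_{\rm p}\},
\]
with smooth $\tilde A$. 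Expanding the scalar product $\bm u_1^{(2)}(x,\omega_1,\bm a)\cdot\overline{\bm u_1^{(2)}(x,\omega_2,\bm a)}$, taking expectation, and using that $\mathbb{E}[\rho(z)\rho(z')]=\mathcal{K}_\rho(z,z')$ is the Schwartz kernel of $Q_\rho$, I would obtain a finite sum of terms of the form $\omega_1^{-(j_1+j_2+1)}\omega_2^{-(j_3+j_4+1)}I_2(x,\omega_1,\omega_2)$, with various choices of $c_1,c_2\in\{2c_{\rm s},c_{\rm s}+c_{\rm p},2c_{\rm p}\}$ and various $F_2$ built out of the amplitude factors.

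For the first estimate \eqref{d24}, I would now apply Lemma~\ref{lemmaadd1} (with $d=2$) directly to each such $I_2$, obtaining $|I_2|\lesssim(\omega_1+\omega_2)^{-m}(1+|\omega_1-\omega_2|)^{-M}$ for arbitrary $M\in\mathbb{N}$. The dominant prefactor corresponds to $j_1=j_2=j_3=j_4=0$ and equals $\omega_1^{-1}\omega_2^{-1}$; all other prefactors decay strictly faster in at least one of $\omega_1,\omega_2$ and so are absorbed into the leading one. Since only finitely many terms arise, estimate \eqref{d24} follows.

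For the second estimate \eqref{d100}, the expansion is the same but the phase in each resulting double integral becomes additive, namely $e^{{\rm i}(c_1\omega_1|x-z|+c_2\omega_2|x-z'|)}$ instead of the differential phase inside $I_2$. Its gradient in $(z,z')$ has modulus bounded below by $c(\omega_1+\omega_2)$ on $D\times D$ for some $c>0$, and this lower bound never degenerates, in particular does not have the near-resonance behaviour when $\omega_1\approx\omega_2$ that forces the careful stationary-phase treatment of Lemma~\ref{lemmaadd1}. Hence the non-stationary-phase / integration-by-parts step applies in an even simpler form and produces arbitrary polynomial decay in $\omega_1+\omega_2$. Since $\omega_1+\omega_2\ge 1+|\omega_1-\omega_2|$ whenever $\omega_1,\omega_2\ge 1$, the resulting bound $(\omega_1+\omega_2)^{-M-m}$ dominates the form $(\omega_1+\omega_2)^{-M}(1+|\omega_1-\omega_2|)^{-m}$ claimed in \eqref{d100}.

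The main technical point to watch is that the integrations by parts are performed against the smooth amplitude rather than against the kernel $\mathcal{K}_\rho(z,z')$, which has a diagonal singularity of order $|z-z'|^{m-2}$ with $m-2\in(-1,0]$. Because $x\in U$ is a positive distance from $D$, the amplitudes $F_2$ are smooth on $D\times D$, and because the singularity is locally integrable, no boundary terms are produced by the IBP procedure. This is exactly the setup treated inside the proof of Lemma~\ref{lemmaadd1}, so the corollary is essentially an immediate consequence once the reduction of $\mathbb{E}[\bm u_1^{(2)}\cdot\overline{\bm u_1^{(2)}}]$ (and its unconjugated variant) to a finite sum of oscillatory integrals of the two types above has been carried out.
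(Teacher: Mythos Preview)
Your reduction to finitely many oscillatory integrals with leading prefactor $(\omega_1\omega_2)^{-1}$ and the direct application of Lemma~\ref{lemmaadd1} for \eqref{d24} is exactly the paper's argument. For \eqref{d100} the paper takes a slightly more economical route than your gradient heuristic: it simply observes that the unconjugated product produces integrals $\tilde I_2$ which are precisely $I_2$ with $\omega_2$ replaced by $-\omega_2$, so that re-running the coordinate-transform machinery of Lemma~\ref{lemmaadd1} swaps the roles of $\omega_1+\omega_2$ and $\omega_1-\omega_2$ in the final bound, yielding $|\tilde I_2|\le C_M(\omega_1+\omega_2)^{-M}(1+|\omega_1-\omega_2|)^{-m}$ directly. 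Your alternative reasoning (non-stationary phase in $(z,z')$ giving arbitrary decay in $\omega_1+\omega_2$, then using $\omega_1+\omega_2\ge 1+|\omega_1-\omega_2|$) reaches the same conclusion, but note that the phrase ``integrations by parts are performed against the smooth amplitude rather than against the kernel $\mathcal K_\rho$'' is not quite accurate as a standalone argument: any IBP in $z$ or $z'$ \emph{does} hit $\mathcal K_\rho$ and worsens its diagonal singularity, which is why the proof of Lemma~\ref{lemmaadd1} passes through the $(g,h)$-coordinates and IBP in $h_1$ (where the symbol $\sigma_2$ is smooth) rather than integrating by parts in the original variables. Once you defer to that machinery, as you ultimately do, the argument is correct; the paper's sign-swap observation is just the shortest way to invoke it.
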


\begin{proof}
Since $\mathbb{E}(\bm{u}_1^{(2)}(x,\omega_1,
\bm{a})\cdot\overline{\bm{u}_1^{(2)}(x,\omega_2,\bm{a})})$ is a linear
combination of $I_2$, where the  coefficient of the highest order is 
$(\omega_1\omega_2)^{-1}$, it follows from  Lemma \ref{lemmaadd1} that 
the estimate \eqref{d24} holds. A simple calculation shows that $\mathbb
E(\bm{u}_1^{(2)}(x,\omega_1,\bm{a})\cdot\bm{u}_1^{(2)}(x,\omega_2,\bm{a}))$
is a linear combination of the following type of integral
\begin{eqnarray*}
\tilde I_2=\int_{\mathbb R^{2}}\int_{\mathbb R^2}e^{{\rm
i}(c_1\omega_1|x-z|+c_2\omega_2|x-z'|)}F_2(z,z',x)\mathbb{E}[\rho(z)\rho(z')]dzdz'
\end{eqnarray*}
with the coefficient $(\omega_1\omega_2)^{-1}$. Clearly, $\tilde I_2$
is analogous to $I_2$ except that $\omega_2$ in $I_2$ is
replaced by $-\omega_2$ in $\tilde I_2$. Following the same proof as that for
the estimate of $I_2$, we may show that 
\begin{eqnarray*}
|\tilde I_2(x,\omega_1,\omega_2)|\leq
C_M(\omega_1+\omega_2)^{-M}(1+|\omega_1-\omega_2|)^{-m},
\end{eqnarray*}
which implies that \eqref{d100} holds and completes the proof. 
\end{proof}

\begin{proof}[Proof of Theorem \ref{thm3}] 
Rewriting $\bm{u}_1=\bm{u}_1^{(2)}+(\bm{u}_1-\bm{u}_1^{(2)})$, we only need
to show that
\begin{eqnarray}\label{d84}
&&\lim_{Q\to\infty}\frac{1}{Q-1}\int_1^Q\omega^{m+2}\sum_{j=1}^2|\bm{u}_1^{(2)}(x,\omega,\bm{a}_j)|^2d\omega=\frac{c_{\rm s}^{6-m}+c_{\rm p}^{6-m}}{2^{m+6}\pi^2}\int_{\mathbb
R^2}\frac{1}{|x-\zeta|^2}\phi(\zeta)d\zeta,\\\label{d85}
&&\lim_{Q\to\infty}\frac{1}{Q-1}\int_1^Q\omega^{m+2}|\bm{u}_1(x,\omega,\bm{a}
)-\bm{u}_1^{(2)}(x,\omega,\bm{a})|^2d\omega=0,\\\label{d86}
&&\lim_{Q\to\infty}\frac{2}{Q-1}\int_1^Q\omega^{m+2}
\Re\big[\overline{\bm{u}_1^{(2)}(x,\omega,\bm{a})}(\bm{u}_1(x,\omega,\bm{a
})-\bm{u}_1^{(2)}(x,\omega,\bm{a}))\big]d\omega=0.
\end{eqnarray}

For \eqref{d84}, it follows from a straightforward calculation that 
\begin{align*}
&\mathbb{E}|\bm u_1^{(2)}(x,\omega,\bm a)|^2=\left|\frac{\rm i-1}{4\sqrt{\pi}}\right|^4\omega^{-2}\sum_{j,\tilde j=1}^2a_ja_{\tilde j}\int_{\mathbb R^2}\int_{\mathbb R^2}\mathbb{E}[\rho(z)\rho(z')]\sum_{k,i,\tilde i=1}^2\\
&\bigg[c_{\rm
s}^{\frac32}\bigg(\frac{\delta_{ki}}{|x-z|^{\frac12}}-\frac{(x_k-z_k)(x_i-z_i)}{
|x-z|^{\frac52}}\bigg)e^{{\rm i}c_{\rm s}|x-z|}+c_{\rm
p}^{\frac32}\frac{(x_k-z_k)(x_i-z_i)}{|x-z|^{\frac52}}e^{{\rm i}c_{\rm
p}|x-z|}\bigg]\\
&\times\bigg[c_{\rm
s}^{\frac32}\bigg(\frac{\delta_{ij}}{|x-z|^{\frac12}}-\frac{(x_i-z_i)(x_j-z_j)}{
|x-z|^{\frac52}}\bigg)e^{{\rm i}c_{\rm s}|x-z|}+c_{\rm
p}^{\frac32}\frac{(x_i-z_i)(x_j-z_j)}{|x-z|^{\frac52}}e^{{\rm i}c_{\rm
p}|x-z|}\bigg]\\
&\times\bigg[c_{\rm s}^{\frac32}\bigg(\frac{\delta_{k\tilde
i}}{|x-z'|^{\frac12}}-\frac{(x_k-z_k')(x_{\tilde i}-z_{\tilde
i}')}{|x-z'|^{\frac52}}\bigg)e^{-{\rm i}c_{\rm s}|x-z'|}+c_{\rm
p}^{\frac32}\frac{(x_k-z_k')(x_{\tilde i}-z_{\tilde
i}')}{|x-z'|^{\frac52}}e^{-{\rm i}c_{\rm p}|x-z'|}\bigg]\\
&\times\bigg[c_{\rm s}^{\frac32}\bigg(\frac{\delta_{\tilde i\tilde
j}}{|x-z'|^{\frac12}}-\frac{(x_{\tilde i}-z_{\tilde i}')(x_{\tilde j}-z_{\tilde
j}')}{|x-z'|^{\frac52}}\bigg)e^{-{\rm i}c_{\rm s}|x-z'|}+c_{\rm
p}^{\frac32}\frac{(x_{\tilde i}-z_{\tilde i}')(x_{\tilde j}-z_{\tilde
j}')}{|x-z'|^{\frac52}}e^{-{\rm i}c_{\rm p}|x-z'|}\bigg]dzdz'\\
&+O(\omega^{-(m+3)}),
\end{align*}
which, together with  Lemma \ref{lemmaadd1}, gives  
\begin{eqnarray}\label{d79}
\mathbb{E}|\bm{u}_1^{(2)}(x,\omega,\bm{a})|^2=T_2(x,\omega,\bm{a})\omega^{
-(m+2)}+O(\omega^{-(m+3)}).
\end{eqnarray}
Here 
\begin{align*}
T_2(x,\omega,\bm{a}) &= 2^{-6-m}\pi^{-2}c_{\rm s}^{6-m}\int_{\mathbb
R^2}\bigg[\frac{1}{|x-\zeta|^2}-\sum_{i,j=1}^2\frac{(x_i-\zeta_i)(x_j-\zeta_j)}{
|x-\zeta|^4} a_ia_j\bigg]\phi(\zeta)d\zeta\\
&\quad + 2^{-6-m}\pi^{-2}c_{\rm p}^{6-m}\int_{\mathbb
R^2}\bigg[\sum_{i,j=1}^2\frac{(x_i-\zeta_i)(x_j-\zeta_j)}{|x-\zeta|^4}
a_ia_j\bigg]\phi(\zeta)d\zeta
\end{align*}
with $a_i$ being the $i$th component of the vector $\bm{a}$. Let
$\bm{a}_1=(a_{11}, a_{12})^\top$ and $\bm{a}_2=(a_{21}, a_{22})^\top$ be two
orthonormal vectors, i.e., there exists some angle $\alpha$ such that $\bm
a_1=(\cos\alpha,\sin\alpha)^\top$ and $\bm a_2=(-\sin\alpha,\cos\alpha)^\top$.
It then holds
$a_{11}^2+a_{21}^2=1$,  $a_{12}^2+a_{22}^2=1$ and $a_{11}a_{12}+a_{21}a_{22}=0$,
which lead to 
\begin{eqnarray*}
\sum_{j=1}^2T_2(x,\omega,\bm{a}_j)=\frac{c_{\rm s}^{6-m}+c_{\rm p}^{6-m}}{2^{m+6}\pi^2}\int_{\mathbb
R^2}\frac{1}{|x-\zeta|^2}\phi(\zeta)d\zeta.
\end{eqnarray*} 
It follows from (\ref{d79}) and the above equation that we have
\begin{eqnarray*}
\sum_{j=1}^2\mathbb{E}|\bm{u}_1^{(2)}(x,\omega,\bm{a}_j)|^2=\frac{c_{\rm s}^{6-m}+c_{\rm p}^{6-m}}{2^{m+6}\pi^2}\int_{\mathbb
R^2}\frac{1}{|x-\zeta|^2}\phi(\zeta)d\zeta
\omega^{-(m+2)}+O(\omega^{-(m+3)}),
\end{eqnarray*}
which gives
\begin{eqnarray*}
\lim_{Q\to\infty}\frac{1}{Q-1}\int_1^Q\omega^{m+2}\sum_{j=1}^2\mathbb{E}|\bm{
u}_1^{(2)}(x,\omega,\bm{a}_j)|^2d\omega=\frac{c_{\rm s}^{6-m}+c_{\rm p}^{6-m}}{2^{m+6}\pi^2}\int_{\mathbb
R^2}\frac{1}{|x-\zeta|^2}\phi(\zeta)d\zeta.
\end{eqnarray*}

To prove (\ref{d84}), based on the above equation, it suffices to prove
that
\begin{eqnarray}\label{d87}
\lim_{Q\to\infty}\frac{1}{Q-1}\int_1^QY(x,\omega, \bm{a})d\omega=0,
\end{eqnarray}
where $Y(x,\omega, \bm{a})$ is defined by
\begin{align*}
Y(x,\omega,\bm{a})&=\omega^{m+2}\left[|\bm{u}_1^{(2)}(x,\omega,\bm{a}
)|^2-\mathbb{E}|\bm{u}_1^{(2)}(x,\omega,\bm{a})|^2\right]\\
&=\omega^{m+2}\left\{[
\Re\bm{u}_1^{(2)}(x,\omega,\bm{a})]^2-\mathbb{E}[
\Re\bm{u}_1^{(2)}(x,\omega,\bm{a})]^2\right\}\\
&\quad +\omega^{m+2}\left\{[
\Im\bm{u}_1^{(2)}(x,\omega,\bm{a})]^2-\mathbb{E}[
\Im\bm{u}_1^{(2)}(x,\omega,\bm{a})]^2\right\},
\end{align*}
where $\Re$ and $\Im$ denote the real and imaginary parts of a complex number, respectively. Note that
\begin{eqnarray*}
\mathbb{E}(Y(x,\omega_1,\bm{a})Y(x,\omega_2,\bm{a}))={\rm F}_1+{\rm
F}_2+{\rm F}_3+{\rm F}_4,
\end{eqnarray*}
where
\begin{eqnarray*}
{\rm
F}_1=\omega_1^{m+2}\omega_2^{m+2}\mathbb{E}\bigg\{\left[(
\Re\bm{u}_1^{(2)}(x,\omega_1,\bm{a}))^2-\mathbb{E}(
\Re\bm{u}_1^{(2)}(x,\omega_1,\bm{a}))^2\right]\\
\times \left[(\Re\bm{u}_1^{(2)}(x,\omega_2,\bm{a}))^2-\mathbb{E}(
\Re\bm{u}_1^{(2)}(x,\omega_2,\bm{a}))^2\right]\bigg\},
\end{eqnarray*}
\begin{eqnarray*}
{\rm
F}_2=\omega_1^{m+2}\omega_2^{m+2}\mathbb{E}\bigg\{\left[(
\Re\bm{u}_1^{(2)}(x,\omega_1,\bm{a}))^2-\mathbb{E}(
\Re\bm{u}_1^{(2)}(x,\omega_1,\bm{a}))^2\right]\\
\times \left[(\Im\bm{u}_1^{(2)}(x,\omega_2,\bm{a}))^2-\mathbb{E}(
\Im\bm{u}_1^{(2)}(x,\omega_2,\bm{a}))^2\right]\bigg\},
\end{eqnarray*}
\begin{eqnarray*}
{\rm
F}_3=\omega_1^{m+2}\omega_2^{m+2}\mathbb{E}\bigg\{\left[(
\Im\bm{u}_1^{(2)}(x,\omega_1,\bm{a}))^2-\mathbb{E}(
\Im\bm{u}_1^{(2)}(x,\omega_1,\bm{a}))^2\right]\\
\times \left[(\Re\bm{u}_1^{(2)}(x,\omega_2,\bm{a}))^2-\mathbb{E}(
\Re\bm{u}_1^{(2)}(x,\omega_2,\bm{a}))^2\right]\bigg\},
\end{eqnarray*}
\begin{eqnarray*}
{\rm 
F}_4=\omega_1^{m+2}\omega_2^{m+2}\mathbb{E}\bigg\{\left[(
\Im\bm{u}_1^{(2)}(x,\omega_1,\bm{a}))^2-\mathbb{E}(
\Im\bm{u}_1^{(2)}(x,\omega_1,\bm{a}))^2\right]\\
\times \left[(\Im\bm{u}_1^{(2)}(x,\omega_2,\bm{a}))^2-\mathbb{E}(
\Im\bm{u}_1^{(2)}(x,\omega_2,\bm{a}))^2\right]\bigg\}.
\end{eqnarray*}
The expression of $\bm u_1^{(2)}$ given in (\ref{d15}) implies that both $
\Re\bm{u}_1^{(2)}$ and $\Im\bm{u}_1^{(2)}$ are centered Gaussian random
fields. Applying Lemma 2.3 in \cite{LHL} and Corollary \ref{coro:u2}, we obtain 
\begin{align*}
{\rm  F}_1&=2\omega_1^{m+2}\omega_2^{m+2}\left[\mathbb{E}(
\Re\bm{u}_1^{(2)}(x,\omega_1,\bm{a})
\Re\bm{u}_1^{(2)}(x,\omega_2,\bm{a}))\right]^2\\
&=\frac{1}{2}\omega_1^{m+2}\omega_2^{m+2}\bigg\{\mathbb{E}\Big[
\Re(\bm{u}_1^{(2)}(x,\omega_1,\bm{a})\bm{u}_1^{(2)}(x,\omega_2,\bm{a}))+
\Re(\bm{u}_1^{(2)}(x,\omega_1,\bm{a})\overline{\bm{u}_1^{(2)}(x,\omega_2,
\bm {a})})\Big]\bigg\}^2\\
&\lesssim
\bigg[\frac{\omega_1^{\frac{m+2}{2}}\omega_2^{\frac{m+2}{2}}}{
(\omega_1\omega_2)(\omega_1+\omega_2)^M(1+|\omega_1-\omega_2|)^m}+\frac{\omega_1^{\frac{m+2}{2}}
\omega_2^{\frac{m+2}{2}}}{(\omega_1\omega_2)(\omega_1+\omega_2)^{m}(1+|\omega_1-\omega_2|)^M}
\bigg]^2\\
&\lesssim
\left[(1+|\omega_1-\omega_2|)^{-m}+(1+|\omega_1-\omega_2|)^{-M}\right]^2.
\end{align*}

By the similar arguments, we can obtain the same estimate for ${\rm F}_2$,
${\rm F}_3$, and ${\rm F}_4$. Thus, an application of Lemma 2.4 in \cite{LHL}
implies 
\begin{eqnarray*}
\lim_{Q\to\infty}\frac{1}{Q-1}\int_1^QY(x,\omega,\bm{a})d\omega=0,
\end{eqnarray*}
which shows that (\ref{d87}) holds and therefore (\ref{d84}) holds. 

For (\ref{d85}), by Lemma \ref{lm:5.2}, we obtain from the fact $m\le d=2$ that 
\begin{align*}
&\frac{1}{Q-1}\int_1^Q\omega^{m+2}|\bm{u}_1(x,\omega,\bm{a})-\bm{u}_1^{(2)}
(x,\omega,\bm{a})|^2d\omega\\
&\lesssim
\frac{1}{Q-1}\int_1^Q\omega^{m-4}d\omega=\frac{1}{m-3}\frac{Q^{m-3}-1} {Q-1}\to
0\quad \text{as } Q\to \infty.
\end{align*}
Combining (\ref{d84})--(\ref{d85}) and
the H\"{o}lder inequality, we may easily verify (\ref{d86}) and complete
the proof. 
\end{proof}

\subsection{The analysis of $\bm u_2$}
\label{sec:5.2}
This subsection is devoted to analyzing the term $\bm{u}_2$ in the Born
approximation (\ref{c2}), which is given by
\begin{eqnarray*}
\bm{u}_2(x,\omega,
\bm{a})=\int_D\int_D\bm{G}(x,z,\omega)\rho(z)\bm{G}(z,z',\omega)\rho(z')\bm{G}
(z',x,\omega)\bm{a}dz'dz,
\end{eqnarray*}
for $x\in U$. The purpose is to show that the contribution of
$\bm u_2$ can also be ignored, which is presented in the following theorem.

\begin{theorem}\label{thm4}
Under the assumptions in Theorem \ref{theorem1}, for all $x\in U$, it holds almost surely that 
\begin{eqnarray*}
\lim_{Q\to\infty}\frac{1}{Q-1}\int_1^Q\omega^{m+2}|\bm{u}_2(x,\omega,
\bm{a})|^2d\omega=0.
\end{eqnarray*} 
\end{theorem}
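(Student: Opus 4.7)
The plan is to mimic the strategy developed for $\bm u_1$ but adapted to the more complex structure of $\bm u_2$. First I would truncate the three occurrences of $\bm G$ in $\bm u_2$, using the asymptotic expansion of the Hankel functions, to obtain $\bm u_2^{(N)}$. An argument analogous to Lemma \ref{lm:5.2}, based on (\ref{d10}) and the a.s.\ finiteness of $\|\rho\|_{H^{-1}_0(D)}$ (which holds since $m>1$), shows that $\bm u_2-\bm u_2^{(N)}$ can be made to decay like any negative power of $\omega$ by choosing $N$ sufficiently large. Its contribution to $(Q-1)^{-1}\!\int_1^Q \omega^{m+2}|\cdot|^2\,d\omega$ then vanishes as in (\ref{d85})--(\ref{d86}).

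For the truncated term, I would expand $|\bm u_2^{(N)}(x,\omega,\bm a)|^2$ and take its expectation. Since $\rho$ is a centered Gaussian field, Isserlis' theorem applied to the fourth-order correlation gives
\begin{align*}
\mathbb{E}[\rho(z_1)\rho(z_2)\rho(z_3)\rho(z_4)] &= \mathcal{K}_\rho(z_1,z_2)\mathcal{K}_\rho(z_3,z_4)+\mathcal{K}_\rho(z_1,z_3)\mathcal{K}_\rho(z_2,z_4)\\
&\quad+\mathcal{K}_\rho(z_1,z_4)\mathcal{K}_\rho(z_2,z_3),
\end{align*}
where the kernel has leading behavior $\mathcal{K}_\rho(z,z')\sim\phi(z)|z-z'|^{m-2}$ by Lemmas \ref{le1} and \ref{le2}. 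Substituting the explicit entries of $\bm G^{(N)}$ expresses $\mathbb{E}|\bm u_2^{(N)}|^2$ as a linear combination of eight-dimensional oscillatory integrals of the form
\[
\omega^{-3}\int_{(\mathbb{R}^2)^4} e^{{\rm i}\omega\Phi}\, F(x,z_1,z_2,z_3,z_4)\, \mathcal{K}_\rho(z_i,z_j)\mathcal{K}_\rho(z_k,z_l)\, dz_1 dz_2 dz_3 dz_4,
\]
where $\Phi$ is a linear combination of $|x-z_1|, |z_1-z_2|, |x-z_3|, |z_3-z_4|$ with coefficients in $\{\pm c_{\rm s},\pm c_{\rm p}\}$, and $F$ is smooth and homogeneous away from the diagonals.

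The next step would be an extension of Lemma \ref{lemmaadd1} to these higher-dimensional integrals. Because $x$ has a positive distance from $D$, the phase $\Phi$ is non-stationary with respect to $z_1$ and $z_3$, permitting repeated integration by parts that produces gain in $\omega$, while the integrations in $z_2,z_4$ are controlled by the integrability of the kernel singularity $|z-z'|^{m-2}$ in $\mathbb{R}^2$. The combination should yield $\mathbb{E}|\bm u_2(x,\omega,\bm a)|^2\lesssim \omega^{-\beta}$ with $\beta>m+2$; the precise threshold $m>\tfrac{5}{3}$ is what guarantees $\beta-(m+2)>0$ after the singular kernel contribution $m-2$ is counted twice against the pair of oscillatory gains. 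This yields $(Q-1)^{-1}\int_1^Q\omega^{m+2}\mathbb{E}|\bm u_2|^2 d\omega\to 0$. For the almost sure statement, I would again split $\omega^{m+2}|\bm u_2|^2$ as its mean plus a fluctuation; since $|\bm u_2|^2$ is a polynomial of degree four in the Gaussian field $\rho$, a further Isserlis expansion bounds the covariance of the fluctuations at two frequencies in terms of products of kernels of the type (\ref{d24})--(\ref{d100}), so Lemma 2.4 of \cite{LHL} can be invoked exactly as in the proof of Theorem \ref{thm3} to pass from convergence in expectation to convergence almost surely.

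The main obstacle I foresee is the third step: computing the decay rate of the oscillatory integrals. The phase $\Phi$ now involves the interior point $z_1$ (and $z_3$) through the factor $|z_1-z_2|$, so stationary phase in the pair $(z_1,z_2)$ may be degenerate along configurations where $z_1,z_2$ are collinear with $x$. A careful dyadic partitioning near the diagonals $z_1=z_2$ and $z_3=z_4$, together with tracking how many integrations by parts are permitted given the singular kernel $|z_i-z_j|^{m-2}$, is what ultimately forces the assumption $m>\tfrac{5}{3}$ and is the technical heart of the argument.
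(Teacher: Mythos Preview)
Your plan has a genuine gap at the very first step. The analogue of Lemma \ref{lm:5.2} works for the outer factors $\bm G(x,z,\omega)$ and $\bm G(z',x,\omega)$ because $x\in U$ and $z,z'\in D$ are separated by a positive distance, so the large--argument asymptotics \eqref{d3} and the bounds \eqref{d10} apply uniformly. For the \emph{middle} factor $\bm G(z,z',\omega)$ with $z,z'\in D$ this fails: the remainder $|G_{ij}-G^{(N)}_{ij}|\lesssim\omega^{-N-3/2}|z-z'|^{-N-3/2}$ blows up on the diagonal $z=z'$, and the larger $N$ is, the worse the spatial singularity becomes. So the claim that $\bm u_2-\bm u_2^{(N)}$ ``can be made to decay like any negative power of $\omega$'' is false. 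In the paper this is handled by a staged replacement $\bm u_2\to\bm u_{2,l}\to\bm u_{2,r}\to\bm v$ in which the outer tensors are truncated via Lemma \ref{lm:G0}(i), the interior one only via Lemma \ref{lm:G0}(ii) (valid for $p<4/3$ because of the diagonal singularity), and the differences are controlled not by the $H^{-1}$ pairing of Lemma \ref{lm:5.2} but by the operator estimate $\|K_\omega\|_{\mathcal L(\bm W^{s,2q}(D))}\lesssim\omega^{-1+2s}$ of Lemma \ref{lm:K2}. This is where the restriction $m>5/3$ first enters.

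The second, larger divergence from the paper is in the treatment of the leading truncated term. You propose to extend Lemma \ref{lemmaadd1} to the eight--dimensional oscillatory integrals arising from $\mathbb E|\bm u_2^{(N)}|^2$ via integration by parts and dyadic localisation near the diagonals; you correctly flag this as the crux and also the place most likely to fail, since the phase $c_1|x-z_1|+c_2|z_1-z_2|+c_3|z_2-x|-\cdots$ is not smooth in $(z_1,z_2)$. The paper avoids this entirely. It writes the leading term $\bm v$ (after further splitting off lower--order pieces $\bm v_2,\bm v_3,\bm v_4$) as a \emph{one}--dimensional Fourier transform in $\omega$: setting $L(z,z')=c_1|x-z|+c_2|z-z'|+c_3|z'-x|$ and foliating $D\times D$ by level sets $\{L=t\}$, one gets $\mathbb B(x,\omega)=\widehat{S}(-\omega)$ with $S(t)$ a surface integral over $\Gamma_t$. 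Parseval then gives $\int_1^\infty\mathbb E|\mathbb B|^2d\omega\le\int\mathbb E|S(t)|^2dt$, and the latter is finite once one checks (via Isserlis and \cite[Lemma 6]{LPS08}) that the kernel singularities $|z-z'|^{-1/2}$ and $|z-\tilde z|^{-(2-m+\epsilon)}$ are integrable over the three--dimensional surfaces $\Gamma_t$. No pointwise decay in $\omega$ is ever extracted, and no separate fluctuation analysis is needed: the bound $\int_1^\infty\omega^{m-2}\mathbb E|\bm v_1|^2d\omega<\infty$ plus dominated convergence already gives the almost sure limit.
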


To prove Theorem \ref{thm4}, motivated by \cite{LPS08} in the acoustic wave case, we decompose $\bm u_2$ into several terms by defining the following auxiliary functions: 
\begin{align}\label{e3}
\bm{u}_{2,l}(x,\omega,
\bm{a})&=\int_D\int_D\bm{G}^{(0)}(x,z,\omega)\rho(z)\bm{G}(z,z',
\omega)\rho(z')\bm{G}(z',x,\omega)\bm{a}dz'dz,\\\label{e4}
\bm{u}_{2,r}(x,\omega,
\bm{a})&=\int_D\int_D\bm{G}^{(0)}(x,z,\omega)\rho(z)\bm{G}(z,z',
\omega)\rho(z')\bm{G}^{(0)}(z',x,\omega)\bm{a}dz'dz,\\\label{eadd1}
\bm{v}(x,\omega,
\bm{a})&=\int_D\int_D\bm{G}^{(0)}(x,z,\omega)\rho(z)\bm{G}^{(0)}(z,z',
\omega)\rho(z')\bm{G}^{(0)}(z',x,\omega)\bm{a}dz'dz,
\end{align}
where $\bm G^{(0)}$ is defined in \eqref{d8}. It is clear to note that
$\bm u_2=(\bm u_2-\bm u_{2,l})+(\bm u_{2,l}-\bm u_{2,r})+(\bm u_{2,r}-\bm v)+\bm v$. 
To estimate these terms, the following preliminary results on $\bm{G}$, $\bm{G}^{(0)}$, and their difference are needed.

\begin{lemma}\label{lm:G0}
Let $s\in[0,1]$ and $U\subset\mathbb R^2\backslash\overline{D}$ be
a bounded and convex domain with a positive distance
from $D$. 
\begin{itemize}
\item[(i)] For any $p\in(1,\infty)$ and $x\in U$, it holds
\begin{align*}
\|\bm G(x,\cdot,\omega)\|_{(W^{s,p}(D))^{2\times2}}&\lesssim\omega^{-\frac12+s},\\
\|\bm G^{(0)}(x,\cdot,\omega)\|_{(W^{s,p}(D))^{2\times2}}&\lesssim\omega^{-\frac12+s},\\
\|\bm G(x,\cdot,\omega)-\bm G^{(0)}(x,\cdot,\omega)\|_{(W^{s,p}(D))^{2\times2}}&\lesssim\omega^{-\frac32+s}.
\end{align*}
\item[(ii)] For any $p\in(1,\frac43)$, it holds
\[
\|\bm G(\cdot,\cdot,\omega)-\bm G^{(0)}(\cdot,\cdot,\omega)\|_{(W^{s,p}(D\times D))^{2\times2}}\lesssim\omega^{-\frac32+s}.
\]
\end{itemize}
\end{lemma}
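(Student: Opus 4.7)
The plan is to derive everything directly from the pointwise asymptotics in \eqref{d10} with $N=0$ and then to obtain fractional orders $s\in(0,1)$ by interpolation. Since $U$ has positive distance $\delta_0:=\mathrm{dist}(U,\overline D)>0$ from $D$, for any $x\in U$ and $z\in D$ the quantity $|x-z|$ is bounded above (by $\mathrm{diam}(U\cup D)$) and below (by $\delta_0$). Thus the $|x-z|$-singular weights in \eqref{d10} are harmless when integrating over $D$, and only the $\omega$-dependent constants contribute to the bound. This is what makes part (i) much easier than part (ii): in part (i) there is no true singularity, whereas in part (ii) the variables $z,z'$ can coincide.

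For part (i), I would first treat the endpoints $s=0,1$. The pointwise bounds $|G_{ij}(x,z,\omega)|\lesssim \omega^{-1/2}|x-z|^{-1/2}$ and $|\nabla_z G_{ij}(x,z,\omega)|\lesssim \omega^{1/2}|x-z|^{-1/2}$, integrated over the bounded set $D$ with $|x-z|\ge \delta_0$, yield $\|\bm G(x,\cdot,\omega)\|_{(L^p(D))^{2\times 2}}\lesssim \omega^{-1/2}$ and $\|\bm G(x,\cdot,\omega)\|_{(W^{1,p}(D))^{2\times 2}}\lesssim \omega^{1/2}$ for every $p\in(1,\infty)$; the same proof applies to $\bm G^{(0)}$. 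For the difference, the improved bounds $|G_{ij}-G^{(0)}_{ij}|\lesssim \omega^{-3/2}|x-z|^{-3/2}$ and $|\nabla_z(G_{ij}-G^{(0)}_{ij})|\lesssim \omega^{-1/2}|x-z|^{-3/2}$ give endpoint estimates $\omega^{-3/2}$ and $\omega^{-1/2}$ respectively. The fractional-order estimates for $s\in(0,1)$ then follow from the real interpolation identification $[L^p(D),W^{1,p}(D)]_{s,p}=W^{s,p}(D)$ applied coordinatewise, producing the claimed $\omega^{-1/2+s}$ (for $\bm G$ and $\bm G^{(0)}$) and $\omega^{-3/2+s}$ (for the difference).

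For part (ii) the situation is different because $(z,z')\in D\times D$ and the singularity of $\bm G(z,z',\omega)-\bm G^{(0)}(z,z',\omega)$ at the diagonal $z=z'$ is no longer tamed by a positive-distance assumption. Using polar coordinates around the diagonal,
\[
\int_D\int_D |z-z'|^{-\tfrac{3p}2}\,dz'\,dz \;\lesssim\; |D|\int_0^{\mathrm{diam}(D)} r^{-\tfrac{3p}2+1}\,dr,
\]
which is finite precisely when $3p/2<2$, i.e., $p<4/3$. This is exactly the restriction imposed in the lemma. Under this restriction, the pointwise estimate for $\bm G-\bm G^{(0)}$ yields the $L^p(D\times D)$ bound $\omega^{-3/2}$; the analogous estimate for the gradient in both variables $z,z'$, obtained from the Hankel expansion \eqref{d6}–\eqref{d8} and the symmetry of $\bm G$ in its first two arguments, yields the $W^{1,p}(D\times D)$ bound $\omega^{-1/2}$. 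Interpolation on $D\times D$ then gives $\omega^{-3/2+s}$ for $s\in(0,1)$, closing the proof.

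The step I expect to be the main obstacle is the rigorous joint-variable gradient estimate in part (ii): one must verify that $\nabla_{(z,z')}(\bm G-\bm G^{(0)})$ obeys the same $|z-z'|^{-3/2}$ singularity as the zeroth-order term, so that the integrability threshold $p<4/3$ is shared by both endpoints of the interpolation. This reduces to differentiating the leading Hankel-function expansion in $\bm G-\bm G^{(0)}$ and checking, term by term, that each derivative either preserves the $|z-z'|^{-3/2}$ decay with a harmless $\omega$-factor or improves the $\omega$-order (as reflected in the two gradient inequalities of \eqref{d10}); all the bookkeeping is elementary but tedious.
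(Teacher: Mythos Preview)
Your proposal is correct and follows essentially the same route as the paper: both parts are obtained by combining the pointwise asymptotics in \eqref{d10} (with $N=0$) at the endpoints $s=0$ and $s=1$ and then interpolating between $L^p$ and $W^{1,p}$. Your identification of the integrability threshold $p<4/3$ for part (ii) and your observation that the gradient of $\bm G-\bm G^{(0)}$ shares the $|z-z'|^{-3/2}$ singularity (so the same threshold applies at both endpoints) match the paper's argument exactly; the ``obstacle'' you flag is already handled by the last line of \eqref{d10} together with the symmetry of $\bm G$ in its two spatial arguments, so no additional work is needed there.
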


\begin{proof}
Results in (i) can be easily obtained by \eqref{d10} and the interpolation between the spaces $L^p(D)$ and $W^{1,p}(D)$. Next is to show (ii).

According to \eqref{d10}, we get
\begin{align*}
\|\bm G(\cdot,\cdot,\omega)-\bm G^{(0)}(\cdot,\cdot,\omega)\|_{(L^{p}(D\times D))^{2\times2}}\lesssim\omega^{-\frac32}\left(\int_D\int_D|z-z'|^{-\frac32p}dzdz'\right)^{\frac1p}\lesssim\omega^{-\frac32}, 
\end{align*}
where we use the facts that there exists a constant $R>0$ such that $|z-z'|<R$ for any $z,z'\in D$, and 
\[
\int_D\int_D|z-z'|^{-\frac32p}dzdz'\lesssim\int_0^Rr^{-\frac32p+1}dr<\infty
\]
for any $p\in(1,\frac43)$. Similarly,
\[
\|\bm G(\cdot,\cdot,\omega)-\bm G^{(0)}(\cdot,\cdot,\omega)\|_{(W^{1,p}(D\times D))^{2\times2}}\lesssim\omega^{-\frac12}.
\]
Finally, the result in (ii) is obtained by the interpolation.
\end{proof}

The operator $K_\omega$, defined by \eqref{eq:K}, satisfies the following estimates when restricted to bounded domains, where the proof is given for a more general case $m\in(1,2]$. We also refer to \cite[Lemma 5]{LPS08} for the acoustic wave case with $m=2$. 

\begin{lemma}\label{lm:K2}
Let $\rho$ satisfy Assumption \ref{as:rho}. For any $s\in(\frac{2-m}2,\frac12)$, $q\in(1,\infty)$ and $\omega\ge1$, it holds that
\begin{eqnarray*}
\|K_{\omega}\|_{\mathcal{L}( \bm W^{s,2q}(D))}\lesssim
\omega^{-1+2s}.
\end{eqnarray*}
\end{lemma}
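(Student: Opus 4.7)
The plan is to adapt the factorization $K_\omega = H_\omega \circ M_\rho$ used in the proof of Lemma~\ref{lm:K}, but to propagate the estimates through the $L^{2q}$-based Sobolev scale rather than the $L^2$-based one. The first step is to show almost surely that pointwise multiplication by $\rho$ is continuous from $\bm W^{s,2q}(D)$ into $\bm H^{-s}_1$. As in Lemma~\ref{lm:K}, I choose a smooth cutoff $\vartheta\in C_0^\infty$ identically one on $D$, rewrite the dual pairing as $\langle\rho\bm u,\bm v\rangle=\langle\rho,(\vartheta\bm u)\cdot(\vartheta\bm v)\rangle$, and bound it using $\rho\in W_0^{(m-2)/2-\epsilon,p}(D)$ from Lemma~\ref{le1} together with the fractional Leibniz rule and Sobolev embeddings on the bounded set $D$. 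The hypothesis $s>(2-m)/2$ delivers exactly the regularity margin that the Leibniz rule requires, and the fact that $q<\infty$ lets one choose the Hölder exponents so that $\bm W^{s,2q}(D)\hookrightarrow\bm W^{\gamma,\tilde q}(D)$ for the intermediate exponents $\gamma,\tilde q$ that appear.

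The second step applies Lemma~\ref{lm:H}(i), which supplies the factor $\omega^{-1+2s}$ between $\bm H^{-s}_1$ and $\bm H^s_{-1}$. Composed with Step~1, this already delivers the case $q=1$, since on the bounded set $D$ the weighted norm $\bm H^s_{-1}$ is equivalent to $\bm H^s(D)=\bm W^{s,2}(D)$. For $q>1$ the output norm $\bm W^{s,2q}(D)$ is strictly stronger, so one has to upgrade the $L^2$-Sobolev output of $H_\omega$ to an $L^{2q}$-Sobolev output without spoiling the frequency factor. The natural route, following the strategy of \cite{LPS08} in the acoustic setting, is to revisit the proof of Lemma~\ref{lm:H}: keep the decomposition $\mathbb R^2=\Omega_1\cup\Omega_2\cup\Omega_3$ and the change of variables $\xi\mapsto\xi^*$ near the characteristic circles $|\xi|=c_{\rm s}\omega,c_{\rm p}\omega$, but redo the resulting dispersive/Riesz-type bounds with the $L^2$ duality replaced by $L^{2q}$–$L^{(2q)'}$ duality, invoking Calderón–Zygmund boundedness of the relevant truncated multipliers on $L^{2q}$.

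The main obstacle is precisely this last step, where one must transfer the sharp $\omega^{-1+2s}$ gain from Lemma~\ref{lm:H}(i), proved by an $L^2$ Plancherel computation, to an $L^{2q}$ statement. A naive interpolation between Lemma~\ref{lm:H}(i) and Lemma~\ref{lm:H}(ii) fails to reproduce the desired exponent because the $\bm L^\infty(\mathcal V)$ endpoint carries the worse factor $\omega^{s+\epsilon}$. One must therefore argue more carefully: split the Hardy–Littlewood maximal function estimates and the mean-value arguments in the proof of Lemma~\ref{lm:H} into pieces whose $L^{2q}$ norms remain controlled by $\omega^{-1+2s}$ times an $\bm H^{-s}_1$-type norm of the input, making essential use of the oscillatory structure of the truncated Hankel kernels of $\bm G$ to compensate for the loss of Plancherel. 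Once this refined $H_\omega:\bm H^{-s}_1\to\bm W^{s,2q}(D)$ estimate is in hand, chaining with Step~1 immediately yields the claimed bound $\|K_\omega\|_{\mathcal L(\bm W^{s,2q}(D))}\lesssim\omega^{-1+2s}$.
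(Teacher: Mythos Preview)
Your plan diverges from the paper's at exactly the point you flag as ``the main obstacle,'' and the paper resolves it by a much cheaper route. Rather than upgrading the \emph{output} side of $H_\omega$ from $\bm H^s_{-1}$ to $\bm W^{s,2q}(D)$ via a fresh $L^{2q}$ Fourier-multiplier analysis, the paper uses Sobolev embeddings on the bounded domain $D$ on \emph{both} sides: from $\bm L^{\tilde p}(D)\hookrightarrow\bm H^{-s_1}(D)$ and $\bm L^{\tilde q'}(D)\hookrightarrow\bm H^{-s_2}(D)$ one deduces, by duality and Lemma~\ref{lm:H} at a shifted index, that
\[
\|H_\omega\|_{\mathcal L(\bm W^{-s,\tilde p}(D),\,\bm W^{s,\tilde q}(D))}\lesssim\omega^{-1+2s+2(\frac1{\tilde p}-\frac1{\tilde q})}.
\]
Setting $\tilde p=(2q)'$, $\tilde q=2q$ and combining with the multiplication estimate $\|\rho\bm u\|_{\bm W^{-s,(2q)'}(D)}\lesssim\|\rho\|_{W^{-s,p}(D)}\|\bm u\|_{\bm W^{s,2q}(D)}$ (this is \cite[Lemma~2]{LPS08}; your Step~1 essentially reproves a weaker version of it) gives
\[
\|K_\omega\|_{\mathcal L(\bm W^{s,2q}(D))}\lesssim\omega^{-1+2s+2(1-\frac1q)}.
\]
Note the exponent: the paper's proof does \emph{not} reach $\omega^{-1+2s}$ uniformly in $q$, and indeed in Section~\ref{sec:5.2} it is precisely the bound $\omega^{-1+2s+2(1-1/q)}$ that is applied, with $q$ subsequently taken close to $1$ (e.g.\ $q=1+s$). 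So the statement you are trying to prove is sharper than what the paper establishes or needs; the $L^{2q}$ Plancherel-type argument you outline would be required for the literal statement, but it is unnecessary for the applications and the paper simply accepts the $2(1-1/q)$ loss.
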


\begin{proof}
For any $\bm f,\bm g\in\bm C_0^\infty(D)$, denote by $\tilde{\bm f},\tilde{\bm g}$ the zero extensions of $\bm f,\bm g$ in $\mathbb R^2$ such that $\tilde{\bm f},\tilde{\bm g}\in\bm C_0^\infty$.  Using Lemma \ref{lm:H} leads to
\begin{align*}
|\langle H_\omega\bm f,\bm g\rangle|=|\langle H_\omega\tilde{\bm f},\tilde{\bm g}\rangle|\lesssim\omega^{-1+2s+s_1+s_2}\|\tilde{\bm f}\|_{\bm H_1^{-s-s_1}}\|\tilde{\bm g}\|_{\bm H_1^{-s-s_2}},
\end{align*}
where
\begin{align*}
\|\tilde{\bm f}\|_{\bm H_1^{-s-s_1}}\lesssim\|\mathcal{J}^{-s}\bm f\|_{\bm H^{-s_1}(D)}\lesssim\|\mathcal{J}^{-s}\bm f\|_{\bm L^{\tilde p}(D)}\lesssim\|\bm f\|_{\bm W^{-s,\tilde p}(D)},\\
\|\tilde{\bm g}\|_{\bm H_1^{-s-s_2}}\lesssim\|\mathcal{J}^{-s}\bm g\|_{\bm H^{-s_2}(D)}\lesssim\|\mathcal{J}^{-s}\bm g\|_{\bm L^{\tilde q'}(D)}\lesssim\|\bm g\|_{\bm W^{-s,\tilde q'}(D)}
\end{align*}
according to the Sobolev embeddings
\begin{align*}
\bm L^{\tilde p}(D)\hookrightarrow\bm H^{-s_1}(D)\quad\text{for}\quad s_1\ge\frac 2{\tilde p}-1,\\
\bm L^{\tilde q'}(D)\hookrightarrow\bm H^{-s_2}(D)\quad\text{for}\quad s_2\ge1-\frac 2{\tilde q}
\end{align*}
with $1<\tilde p\le2\le\tilde q<\infty$ and $\tilde q'$ satisfying $\frac1{\tilde q}+\frac1{\tilde q'}=1$ (cf. \cite[Theorem 3.1]{LP}). By choosing $s_1=\frac 2{\tilde p}-1$ and $s_2=1-\frac 2{\tilde q}$, we have
\[
|\langle H_\omega\bm f,\bm g\rangle|\lesssim\omega^{-1+2s+2(\frac1{\tilde p}-\frac1{\tilde q})}\|\bm f\|_{\bm W^{-s,\tilde p}(D)}\|\bm g\|_{\bm W^{-s,\tilde q'}(D)},
\]
and hence
\[
\|H_\omega\|_{\mathcal{L}(\bm W^{-s,\tilde p}(D),\bm W^{s,\tilde q}(D))}\lesssim\omega^{-1+2s+2(\frac1{\tilde p}-\frac1{\tilde q})}.
\]
Note that $K_\omega\bm u=H_\omega(\rho\bm u)$. For any $\bm u\in\bm C_0^\infty(D)\subset\bm W^{s,2q}(D)$, we obtain from  \cite[Lemma 2]{LPS08} and Lemma \ref{le1} that 
\begin{align*}
\|K_\omega\bm u\|_{\bm W^{s,2q}(D)}&\lesssim\|H_\omega\|_{\mathcal{L}(\bm W^{-s,(2q)'}(D),\bm W^{s,2q}(D))}\|\rho\bm u\|_{\bm W^{-s,(2q)'}(D)}\\
&\lesssim\|H_\omega\|_{\mathcal{L}(\bm W^{-s,(2q)'}(D),\bm W^{s,2q}(D))}\|\rho\|_{W^{-s,p}(D)}\|\bm u\|_{\bm W^{s,2q}(D)}\\
&\lesssim\omega^{-1+2s+2(1-\frac1q)}\|\bm u\|_{\bm W^{s,2q}(D)},
\end{align*}
where $p$ satisfies $\frac1p+\frac1q=1$ and  $(2q)'$ satisfies $\frac1{2q}+\frac1{(2q)'}=1$. The proof is then completed based on the fact that $\bm C_0^\infty(D)$ is dense in $\bm L^{2q}(D)$ (cf. \cite[Section 2.30]{AF03}).
\end{proof}
According to Lemmas \ref{lm:u0}, \ref{lm:G0} and \ref{lm:K2}, we get for any $x\in U$ and $\omega\ge1$ that
\begin{align*}
&|\bm{u}_2(x,\omega, \bm{a})-\bm{u}_{2,l}(x,\omega, \bm{a})|\\
&=\left|\Big\langle \rho,
\big[\bm G(x,\cdot,\omega)-\bm G^{(0)}(x,\cdot,\omega)\big]\int_D\bm G(\cdot,z',\omega)\rho(z')\bm G(z',x,\omega)\bm adz'\Big\rangle\right|\\
&\le\|\rho\|_{W^{-s,p}_0(D)}\left\|\left[\bm G(x,\cdot,\omega)-\bm G^{(0)}(x,\cdot,\omega)\right]K_{\omega}\bm u_0(\cdot,x)\right\|_{\bm W^{s,q}(D)}\\
&\le\|\rho\|_{W^{-s,p}_0(D)}\|\bm G(x,\cdot,\omega)-\bm G^{(0)}(x,\cdot,\omega)\|_{(W^{s,2q}(D))^{2\times2}}\|K_{\omega}\|_{\mathcal{L}(\bm W^{s,2q}(D))}\|\bm u_0(\cdot,x)\|_{\bm W^{s,2q}(D)}\\
&\lesssim\omega^{-\frac32+s-1+2s+2(1-\frac1q)-\frac12+s}=\omega^{-3+4s+2(1-\frac1q)}
\end{align*} 
for any $s\in(\frac{2-m}2,\frac12)$, $q\in(1,\infty)$ and $p$ satisfying $\frac1p+\frac1q=1$. Taking $q=1+s$, we then deduce
\begin{align}\label{eq:u2l}
\lim_{Q\to\infty}\frac{1}{Q-1}\int_1^Q\omega^{m+2}|\bm{u}_2(x,\omega,
\bm{a})-\bm{u}_{2,l}(x,\omega,
\bm{a})|^2d\omega
\lesssim\lim_{Q\to\infty}\frac{Q^{m-3+8s+\frac{4s}{1+s}}-1}{Q-1}=0
\end{align}
almost surely by choosing $s\in(\frac{2-m}2,\frac12)$ such that $m-3+8s+\frac{4s}{1+s}<m-3+12s<1$. Note that such an $s$ can be chosen in the interval $(\frac{2-m}2,\frac{4-m}{12})$, which is not empty due to the fact $m\in(\frac53,2]$ under assumptions in Theorem \ref{theorem1}.

Similarly, for the term $\bm u_{2,l}-\bm u_{2,r}$, we get
\begin{align*}
&|\bm{u}_{2,l}(x,\omega, \bm{a})-\bm{u}_{2,r}(x,\omega, \bm{a})|\\
&=\left|\Big\langle \rho,
\bm G^{(0)}(x,\cdot,\omega)\int_D\bm G(\cdot,z',\omega)\rho(z')\big[\bm G(z',x,\omega)-\bm G^{(0)}(z',x,\omega)\big]\bm adz'\Big\rangle\right|\\
&\le\|\rho\|_{W^{-s,p}_0(D)}\left\|\bm G^{(0)}(x,\cdot,\omega)K_{\omega}\big((\bm G-\bm G^{(0)})\bm a\big)(\cdot,x)\right\|_{\bm W^{s,q}(D)}\\
&\le\|\rho\|_{W^{-s,p}_0(D)}\|\bm G^{(0)}(x,\cdot,\omega)\|_{(W^{s,2q}(D))^{2\times2}}\|K_{\omega}\|_{\mathcal{L}(\bm W^{s,2q}(D))}\|(\bm G-\bm G^{(0)})(\cdot,x,\omega)\bm a\|_{\bm W^{s,2q}(D)}\\
&\lesssim\omega^{-\frac12+s-1+2s+2(1-\frac1q)-\frac32+s}
=\omega^{-3+4s+2(1-\frac1q)}
\end{align*} 
for any $s\in(\frac{2-m}2,\frac12)$, $q\in(1,\infty)$ and $p$ satisfying $\frac1p+\frac1q=1$, which leads to 
\begin{align}\label{eq:u2r}
\lim_{Q\to\infty}\frac{1}{Q-1}\int_1^Q\omega^{m+2}|\bm{u}_{2,l}(x,\omega,
\bm{a})-\bm{u}_{2,r}(x,\omega,
\bm{a})|^2d\omega=0
\end{align}
in the almost surely sense by taking $q=1+s$.

The term $\bm u_{2,r}-\bm v$ satisfies
\begin{align}\label{eq:u-v}
&\left|\bm{u}_{2,r}(x,\omega,\bm{a})-\bm{v}(x,\omega,\bm{a})\right|\notag\\
&=\left|\int_D\int_D\bm G^{(0)}(x,z,\omega)\rho(z)\big[\bm G(z,z',\omega)-\bm G^{(0)}(z,z',\omega)\big]\rho(z')\bm G^{(0)}(z',x,\omega)\bm adz'dz\right|\notag\\
&\lesssim\|\bm G(\cdot,\cdot,\omega)-\bm G^{(0)}(\cdot,\cdot,\omega)\|_{(W^{2s,p}(D\times D))^{2\times2}}\|\bm f\|_{\bm W_0^{-2s,q}(D\times D)},
\end{align}
where $p\in(1,\frac43)$, $q$ satisfies $\frac1p+\frac1q=1$, and
\[
\bm f(z,z'):=\rho(z)\rho(z')\bm G^{(0)}(x,z,\omega)\bm G^{(0)}(z',x,\omega)\bm a.
\]
Note that for any $\bm g\in \bm W^{2s,p}(D\times D)$ and $s\in(\frac{2-m}2,\frac12)$, 
\begin{align*}
|\langle\bm f,\bm g\rangle|&\lesssim\|\rho\otimes\rho\|_{W_0^{-2s,q}(D\times D)}\|(\bm G^{(0)}(x,\cdot,\omega)\bm G^{(0)}(\cdot,x,\omega)\bm a)\cdot\bm g\|_{W^{2s,p}(D\times D)}\\
&\lesssim\|\rho\|_{W^{-s,\infty}(D)}^2\|\bm G^{(0)}(x,\cdot,\omega)\bm G^{(0)}(\cdot,x,\omega)\bm a\|_{\bm W^{2s,\infty}(D\times D)}\|\bm g\|_{\bm W^{2s,p}(D\times D)}\\
&\lesssim\omega^{-1+2s}\|\bm g\|_{\bm W^{2s,p}(D\times D)}
\end{align*}
according to \eqref{d10}, Lemma \ref{le1}, and the fact that $\rho\otimes\rho\in W^{-2s,\infty}_0(D\times D)\subset W^{-2s,q}_0(D\times D)$ for any $\rho\in W_0^{-s,\infty}(D)$ (cf. \cite{LPS08}). 
As a result, 
\[
\|\bm f\|_{\bm W^{-2s,q}_0(D\times D)}\lesssim\omega^{-1+2s}.
\]
Hence, \eqref{eq:u-v} turns to be
\[
\left|\bm{u}_{2,r}(x,\omega,\bm{a})-\bm{v}(x,\omega,\bm{a})\right|\lesssim\omega^{-\frac52+4s},
\]
which leads to 
\begin{align}\label{eq:u2r-v}
\lim_{Q\to\infty}\frac{1}{Q-1}\int_1^Q\omega^{m+2}|\bm{u}_{2,r}(x,\omega,
\bm{a})-\bm v(x,\omega,
\bm{a})|^2d\omega\lesssim\lim_{Q\to\infty}\frac{Q^{m-2+8s}-1}{Q-1}=0
\end{align}
almost surely by choosing $s\in(\frac{2-m}2,\frac12)$ with $m\in(\frac53,2]$ such that $m-2+8s<1$.

Finally, the result in Theorem \ref{thm4} is obtained by combining \eqref{eq:u2l}, \eqref{eq:u2r}, \eqref{eq:u2r-v} and the following lemma, whose proof is rather technical and is given in Appendix \ref{app:2}. 

\begin{lemma}\label{lm:v}
Under assumptions in Theorem \ref{theorem1}, for all $x\in U$, the auxiliary function $\bm v$ defined in \eqref{eadd1} satisfies 
\begin{align}\label{eq:v}
\lim_{Q\to\infty}\frac{1}{Q-1}\int_1^Q\omega^{m+2}|\bm v(x,\omega,
\bm{a})|^2d\omega=0
\end{align} 
in the almost surely sense.
\end{lemma}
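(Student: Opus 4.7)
The plan is to mirror the scheme used for $\bm u_1^{(2)}$ in the proof of Theorem~\ref{thm3}: reduce the almost sure limit \eqref{eq:v} to two quantitative moment estimates and then conclude by an ergodic-type argument. More precisely, it suffices to establish
\[
\mathbb{E}|\bm v(x,\omega,\bm a)|^2 = O\!\bigl(\omega^{-(m+2)-\delta}\bigr)
\]
uniformly in $x\in U$ for some $\delta>0$, together with a decorrelation estimate of the form
\[
{\rm Cov}\bigl(|\bm v(x,\omega_1,\bm a)|^2,|\bm v(x,\omega_2,\bm a)|^2\bigr)\lesssim (\omega_1\omega_2)^{-(m+2)-\delta'}\bigl(1+|\omega_1-\omega_2|\bigr)^{-M}
\]
for arbitrary $M$. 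The almost sure statement then follows from Lemma~2.4 of \cite{LHL}, exactly as in the passage from \eqref{d79} to \eqref{d84}.

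To analyze $\mathbb{E}|\bm v|^2$, I would write $|\bm v|^2$ as a quadruple integral over $D^4$ and use Isserlis' theorem for the centered Gaussian field $\rho$ to split
\[
\mathbb{E}\bigl[\rho(z_1)\rho(z_2)\rho(z_3)\rho(z_4)\bigr] = \sum_{\text{pairings}}\mathcal{K}_\rho(\cdot,\cdot)\,\mathcal{K}_\rho(\cdot,\cdot)
\]
into the three Wick pairings. By Lemma~\ref{le2} and the kernel formula following it, each $\mathcal{K}_\rho(z,z')$ has leading singular part $\phi(z)|z-z'|^{m-2}$ (with a possible logarithm in the integer-Hurst case) plus a smooth remainder. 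Substituting the asymptotic expansion \eqref{d3} into each of the six factors of $\bm G^{(0)}$ converts each Wick term into a linear combination of six-fold oscillatory integrals whose phase is a signed sum of $c\omega|x-z_j|$ and $c\omega|z_j-z_k|$ with $c\in\{c_{\rm s},c_{\rm p}\}$, and whose amplitude is a product of homogeneous factors times the two singular kernels $\mathcal{K}_\rho$.

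Each resulting integral would be controlled by a variant of Lemma~\ref{lemmaadd1}. The ``disconnected'' Wick pairing that couples $(z_1,z_2)$ with $(z_3,z_4)$ factorizes into the product of two integrals each structurally of the form appearing in $\mathbb{E}[\bm u_1^{(2)}\cdot\overline{\bm u_1^{(2)}}]$ but with one extra $\bm G^{(0)}$ factor, which by Corollary~\ref{coro:u2} is easily bounded by $\omega^{-(m+2)-1}$. The two ``connected'' pairings require the full microlocal machinery: after the changes of variables introduced in the proof of Lemma~\ref{lemmaadd1}, stationary phase in the spatial variables yields the amplitude gain $\omega^{-3}$ from the six factors $\omega^{-1/2}$ of $\bm G^{(0)}$, while the $|\xi|^{-m}$ behavior of the two covariance symbols produces a further gain from the interaction of each singular kernel $|z_j-z_k|^{m-2}$ with its accompanying oscillatory phase $e^{\pm ic\omega|z_j-z_k|}$.

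The main obstacle is the stationary phase analysis of these two ``connected'' pairings, where six oscillating phases are interlaced with two non-smooth kernels of order $m-2$. It is precisely the bookkeeping of amplitude loss versus oscillatory gain in these terms that forces the quantitative hypothesis $m>\tfrac53$ in Theorem~\ref{theorem1}: the extra factor $K_\omega$ in $\bm v$ costs the operator-norm bound $\omega^{-1+2s}$ of Lemma~\ref{lm:K2} for $s>\tfrac{2-m}{2}$, and balancing this cost against the target exponent $-(m+2)-\delta$ for $\mathbb{E}|\bm v|^2$ reproduces exactly the threshold $m>\tfrac53$. The decorrelation estimate is obtained analogously by repeating the argument with two independent frequencies $\omega_1,\omega_2$ and using non-stationarity of the difference phase in the regime $|\omega_1-\omega_2|\gg 1$. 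Once both moment estimates are in hand, the ergodic argument of the last paragraph of the proof of Theorem~\ref{thm3} yields \eqref{eq:v}.
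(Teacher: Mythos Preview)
Your approach differs substantially from the paper's, and the connected-pairing step has a genuine gap. The paper never attempts a pointwise bound $\mathbb E|\bm v|^2=O(\omega^{-(m+2)-\delta})$. After stripping off lower-order pieces $\bm v_k$ ($k\ge2$) by crude Sobolev estimates, it writes the leading term as a sum of oscillatory integrals $\mathbb B(x,\omega)$ with phase $L(z,z')=c_1|x-z|+c_2|z-z'|+c_3|z'-x|$, observes that $\mathbb B$ is the one-dimensional Fourier transform in $\omega$ of a function $S(t)$ living on the level sets $\{L=t\}$, and applies Parseval: $\int_1^\infty|\mathbb B|^2\,d\omega=\int|S(t)|^2\,dt$. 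The Wick expansion and kernel bound $|\mathcal K_\rho(z,z')|\lesssim|z-z'|^{-(2-m+\epsilon)}$ are used only to show $\mathbb E|S(t)|^2$ is uniformly bounded in $t$, giving $\int_1^\infty\mathbb E|\mathbb B|^2\,d\omega<\infty$ and hence, by Fubini and dominated convergence, the almost sure limit. No decorrelation estimate or ergodic lemma is needed.

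Your plan breaks because in the connected pairings the covariance couples, say, $z_1$ from $\bm v$ with $z_3$ from $\overline{\bm v}$, while the total phase contains only $|x-z_1|$, $|z_1-z_2|$, $|z_2-x|$, $|x-z_3|$, $|z_3-z_4|$, $|z_4-x|$ and never $|z_1-z_3|$ or $|z_2-z_4|$. There is thus no ``accompanying oscillatory phase $e^{\pm ic\omega|z_j-z_k|}$'' aligned with the singular kernel, the mechanism of Lemma~\ref{lemmaadd1} does not apply, and the covariance symbol yields no $\omega$-gain; without it you cannot reach the target exponent. Your appeal to Corollary~\ref{coro:u2} for the disconnected pairing is likewise misplaced, since $\bm u_1^{(2)}$ has no interior $\bm G^{(0)}(z,z')$ factor and a different (signed) phase. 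As an aside, if the pointwise bound did hold, your decorrelation step would be superfluous: $\mathbb E|\bm v|^2=O(\omega^{-(m+2)-\delta})$ alone gives $\int_1^\infty\omega^{m+1+\delta/2}|\bm v|^2\,d\omega<\infty$ a.s.\ by Fubini, whence $\tfrac1{Q-1}\int_1^Q\omega^{m+2}|\bm v|^2\,d\omega\le\tfrac{Q^{1-\delta/2}}{Q-1}\int_1^\infty\omega^{m+1+\delta/2}|\bm v|^2\,d\omega\to0$.
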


\subsection{The analysis of $\bm b$}\label{sec:5.3}

For any $x,y\in U$ and $s\in(\frac{2-m}2,\frac12)$,  it follows from Lemmas \ref{lm:K}, \ref{lm:u0} and \eqref{d10} that
\begin{align*}
|\bm b(x,y)|&=\left|\sum_{j=3}^\infty\bm u_j(x,y)\right|=\left|\sum_{j=3}^\infty\langle\rho,\bm G(x,\cdot,\omega)\bm u_{j-1}(\cdot,y)\rangle\right|\\
&\lesssim\sum_{j=2}^\infty\|\rho\|_{W^{-s,\infty}(D)}\|\bm G(x,\cdot,\omega)\bm u_j(\cdot,y)\|_{\bm W^{s,1}(D)}\\
&\lesssim\sum_{j=2}^\infty\|\bm G(x,\cdot,\omega)\|_{(W^{s,\infty}(D))^{2\times 2}}\|\bm u_j(\cdot,y)\|_{\bm H^s(D)}\\
&\lesssim\sum_{j=2}^\infty\|\bm G(x,\cdot,\omega)\|_{(W^{s,\infty}(D))^{2\times 2}}\|K_\omega\|^j_{\mathcal{L}(\bm H^s_{-1})}\|\bm u_0(\cdot,y)\|_{\bm H^s_{-1}}\\
&\lesssim\sum_{j=2}^\infty\omega^{-\frac12+s}\left(\omega^{-1+2s}\right)^j\omega^{-\frac12+s}\lesssim\omega^{-3+6s}.
\end{align*}
Then it holds for $\bm b(x,\omega,\bm a_j):=\bm b(x,x)$ that
\begin{align*}
\frac{1}{Q-1}\int_1^Q\omega^{m+2}|\bm{b}(x,\omega,\bm{
a}_j)|^2d\omega &\lesssim\frac{1}{Q-1}\int_1^Q\omega^{
m+2+(-3+6s)2}d\omega\\
&\lesssim\frac{Q^{m-3+12s}-1}{Q-1}\to0\quad\text{as }
Q\to\infty
\end{align*}
by choosing $s\in(\frac{2-m}2,\frac12)$ and $s<\frac{4-m}{12}$. 
We mention that such an $s$ exists since $m>\frac53$ under assumptions in Theorem \ref{theorem1}.

\subsection{The proof of Theorem \ref{theorem1}}
\label{sec:5.4}

Based on the analysis of $\bm{u}_1$, $\bm{u}_2$ and $\bm{b}$, we are now able to prove the
main result: the strength $\phi$ in the principal symbol of the covariance
operator $Q_\rho$ can be uniquely determined by the amplitude of two scattered
fields averaged over the frequency band with
probability one. Here, the two scattered fields are associated with the incident
waves given by
$\bm G(x,y)\bm{a}_1$ and $\bm G(x,y)\bm{a}_2$ for any two orthonormal
vectors $\bm{a}_1$ and $\bm{a}_2$. 

Recall that the scattered field $\bm{u}^{ s}$ can be written as 
\begin{eqnarray*}
\bm{u}^{s}(x,\omega,\bm{a}_j)=\bm{u}_1(x,\omega,\bm{a}_j)+\bm{u}_2(x,\omega,\bm{a}_j)+
\bm{b}(x,\omega,\bm{a}_j),\quad j=1,2,
\end{eqnarray*} 
where $\bm u_1$, $\bm u_2$ and $\bm b$ satisfy 
for $x\in U$ that 
\begin{eqnarray*}
&&\lim_{Q\to\infty}\frac{1}{Q-1}\int_1^Q\omega^{m+2}\sum_{j=1}^2|\bm{u}_1
(x,\omega,\bm{a}_j)|^2d\omega=\frac{c_{\rm s}^{6-m}+c_{\rm p}^{6-m}}{2^{m+6}\pi^2}\int_{\mathbb
R^2}\frac{1}{|x-\zeta|^{2}}\phi(\zeta)d\zeta,\\
&&\lim_{Q\to\infty}\frac{1}{Q-1}\int_1^Q\omega^{m+2}|\bm{u}_2(x,\omega,\bm{
a}_j)|^2d\omega=0,\\
&&\lim_{Q\to\infty}\frac{1}{Q-1}\int_1^Q\omega^{m+2}|\bm{b}(x,\omega,\bm{
a}_j)|^2d\omega=0.
\end{eqnarray*}
Using the H\"{o}lder inequality gives 
\begin{align*}
&\left|\frac{1}{Q-1}\int_1^Q\omega^{m+2}
\Re\left[\bm{u}_i(x,\omega,\bm{a}_j)\overline{\bm{b}(x,\omega,\bm{a}_j)}
\right]d\omega\right|\\
&\lesssim
\frac{1}{Q-1}\int_1^Q\omega^{m+2}|\bm{u}_i(x,\omega,\bm{a}_j)\|\bm{b}
(x,\omega,\bm{a}_j)|d\omega\\
&\lesssim
\left[\frac{1}{Q-1}\int_1^Q\omega^{m+2}|\bm{u}_i(x,\omega,\bm{a}_j
)|^2\right]^{\frac{1}{2}}\left[\frac{1}{Q-1}\int_1^Q\omega^{m+2}|\bm{b}
(x,\omega,\bm{a}_j)|^2\right]^{\frac{1}{2}}\to0
\end{align*}
as $Q\to\infty$ for $i,j=1,2$ and similarly
\[
\left|\frac{1}{Q-1}\int_1^Q\omega^{m+2}
\Re\left[\bm{u}_1(x,\omega,\bm{a}_j)\overline{\bm{u}_2(x,\omega,\bm{a}_j)}
\right]d\omega\right|\to0
\]
as $Q\to\infty$ for $j=1,2$. 
Hence, we obtain
\begin{align*}
&\lim_{Q\to\infty}\frac{1}{Q-1}\int_1^Q\omega^{m+2}\sum_{j=1}^2|\bm{u}^
{ s}(x,\omega,\bm{a}_j)|^2d\omega\\
&=\lim_{Q\to\infty}\frac{1}{Q-1}\int_1^Q\omega^{m+2}\sum_{j=1}
^2\left|\bm{u}_1(x,\omega,\bm{a}_j)+\bm{u}_2(x,\omega,\bm{a}_j)+\bm{b}(x,
\omega,\bm{a}_j)\right|^2d\omega\\
&=\lim_{Q\to\infty}\frac{1}{Q-1}\int_1^Q\omega^{m+2}\sum_{j=1}^2
|\bm{u}_1(x,\omega,\bm{a}_j)|^2d\omega\\
&=\frac{c_{\rm s}^{6-m}+c_{\rm p}^{6-m}}{2^{m+6}\pi^2}\int_{\mathbb
R^2}\frac{1}{|x-\zeta|^{2}}\phi(\zeta)d\zeta.
\end{align*}  

It follows from \cite[Lemma 3.8]{LHL} that the function $\phi$ can be
uniquely determined from the integral equation (\ref{a7}) for all $x\in U$, which completes the proof of Theorem \ref{theorem1}.

\section{Conclusion}
\label{sec:6}

We have studied the direct and inverse scattering problems for the time-harmonic elastic wave
equation with a random potential in two dimensions. The potential is assumed
to be a microlocally isotropic generalized Gaussian random field whose
covariance is a classical pseudo-differential operator. For such a distribution
potential, we deduce the equivalence between the direct scattering problem and
the Lippmann--Schwinger integral equation which is shown to have a unique
solution. Employing the Born approximation in the high frequency regime and
microlocal analysis for the Fourier integral operators, we establish the
connection between the principal symbol of the covariance operator for the
random potential and the amplitude of the scattered field generated by a single
realization of the random potential. Based on the identity, we obtain the
uniqueness for the recovery of the micro-correlation strength of the random
potential.

For the three-dimensional case, the well-posedness of the direct scattering problem can be obtained based on the same procedure as the two-dimensional case. The convergence of the Born series and the estimate for $\bm u_1$ obtained in Theorem \ref{thm3} can also be extended to the three-dimensional case. However, what is different from the two-dimensional case is that the Green tensor in three dimensions does not decay with respect to the frequency $\omega$. It is unclear whether the contribution of higher order terms can be neglected in three dimensions. Hence, a frequency-dependence
assumption of the potential $\rho$, e.g., $\rho(x,\omega)=\rho(x)\omega^{-\theta}$ with $\theta>\frac{m-1}{2}$, might be required to uniquely recover the micro-correlation strength by using near-field data \cite{HLP}.
A possible way to avoid this difficulty in the three-dimensional case is to recover the strength by using the far-field patterns as the data \cite{LLW}. Another interesting and challenging problem is to investigate the case
where both the source and potential are random. We will report the progress on these problems elsewhere in the future.

\section{Acknowledgements}

The research of P. Li is supported in part by the NSF grant DMS-1912704.

\appendix
\section{Proof of Lemma \ref{lemmaadd1}}\label{app:1}

For $x\in U$, it holds
\begin{align*}
I_d(x,\omega_1,\omega_2)&=\int_{\mathbb R^d}\int_{\mathbb R^d}e^{{\rm
i}(c_1\omega_1|x-z|-c_2\omega_2|x-z'|)}F_d(z,z',x)\mathbb{E}(\rho(z)\rho(z'))dzdz'\\
&=\int_{\mathbb R^d}\int_{\mathbb R^d}e^{{\rm
i}(c_1\omega_1|x-z|-c_2\omega_2|x-z'|)}F_d(z,z',x)B_1(z,z',x)dzdz',
\end{align*}
where $B_1(z,z',x)=\mathcal{K}_{\rho}(z,z')\vartheta(x)$ with $\mathcal{K}_{\rho}$ being the kernel function of $\rho$ and $\vartheta(x)\in C_0^\infty$ such that $\vartheta|_U\equiv1$ and supp$(\vartheta)\subset\mathbb{R}^d\backslash\overline{D}$. 
Since $\rho$ is an isotropic Gaussian random field of order $-m$, we
can represent $\mathcal{K}_{\rho}$ in terms of its symbol by
\begin{eqnarray*}\label{d27}
\mathcal{K}_{\rho}(z,z')=\frac1{(2\pi)^{d}}\int_{\mathbb R^d}e^{{\rm
i}(z-z')\cdot\xi}\sigma(z,\xi)d\xi,
\end{eqnarray*}
where $\sigma\in S_{1,0}^{-m}(\mathbb R^d\times\mathbb R^d)$ is
the symbol of the covariance operator $Q_{\rho}$ with $S_{1,0}^{-m}(\mathbb R^d\times\mathbb R^d)$ being the space of symbols of order $-m$ which is defined by
\begin{eqnarray*}
S_{1,0}^{-m}(\mathbb R^d\times\mathbb R^d):=\Big\{a(x,\xi)\in C^{\infty}(\mathbb
R^d\times\mathbb R^d):|\partial_{\xi}^{\alpha}\partial_{x}^{\beta}|\leq
C_{\alpha,\beta}(1+|\xi|)^{-m-|\alpha|}\Big\}.
\end{eqnarray*} 
Here $\alpha$, $\beta$ are multiple indices with
$|\alpha|:=\sum_{j=1}^d\alpha_j$ for $\alpha = (\alpha_1,...,\alpha_d)^\top$.
Therefore
\begin{eqnarray*}
B_1(z,z',x)=\frac1{(2\pi)^{d}}\int_{\mathbb
R^d}e^{{\rm i}(z-z')\cdot\xi}\sigma_1(z,x,\xi)d\xi,
\end{eqnarray*}
where $\sigma_1(z,x,\xi)=\sigma(z,\xi)\vartheta(x)\in S_{1,0}^{-m}$ has the
principal symbol
$\sigma_1^{p}(z,x,\xi)=\phi(z)|\xi|^{-m}\vartheta(x)$.
Moreover, $B_1$ is a conormal distribution in $\mathbb R^{3d}$ of the
H\"ormander type and is compactly supported in $D^\vartheta:=D\times
D\times\text{supp}(\vartheta)$, which has conormal singularity on the surface
$S:=\{(z,z',x)\in\mathbb{R}^{3d}:z-z'=0\}$, and is invariant under the change of
coordinates (cf. \cite{L3}).

To estimate the integral $I_d$, we apply the coordinate transformations
$\tau,\eta$ and $\gamma$ which are introduced in \cite{LW}. 

Define the invertible transformation $\tau:\mathbb R^{3d}\to\mathbb R^{3d}$ by $\tau(z,z',x)=(g,h,x)$,
where $g=(g_1,\cdots,g_d)$ and $h=(h_1,\cdots,h_d)$ with
\begin{align*}
g_1=\frac{1}{2}(|x-z|-|x-z'|),\quad
g_2=\frac12\bigg[|x-z|\arcsin\Big(\frac{z_1-x_1}{|x-z|}
\Big)-|x-z'|\arcsin\Big(\frac{z'_1-x_1}{|x-z'|}\Big)\bigg],\\
h_1=\frac{1}{2}(|x-z|+|x-z'|),\quad
h_2=\frac12\bigg[|x-z|\arcsin\Big(\frac{z_1-x_1}{|x-z|}
\Big)+|x-z'|\arcsin\Big(\frac{z'_1-x_1}{|x-z'|}\Big)\bigg]
\end{align*}
if $d=2$, and 
\begin{eqnarray*}
&& g_1=\frac12\left(|x-z|-|x-z'|\right),\quad
h_1=\frac12\left(|x-z|+|x-z'|\right),\\
&&g_2=\frac12\bigg[|x-z|\arccos\Big(\frac{z_3-x_3}{|x-z|}
\Big)-|x-z'|\arccos\Big(\frac{z_3'-x_3}{|x-z'|}\Big)\bigg],\\
&&h_2=\frac12\bigg[|x-z|\arccos\Big(\frac{z_3-x_3}{|x-z|}
\Big)+|x-z'|\arccos\Big(\frac{z_3'-x_3}{|x-z'|}\Big)\bigg],\\
&&g_3=\frac12\bigg[|x-z|\arctan\Big(\frac{z_2-x_2}{z_1-x_1}
\Big)-|x-z'|\arctan\Big(\frac{z_2'-x_2}{z_1'-x_1}\Big)\bigg],\\
&&h_3=\frac12\bigg[|x-z|\arctan\Big(\frac{z_2-x_2}{z_1-x_1}
\Big)+|x-z'|\arctan\Big(\frac{z_2'-x_2}{z_1'-x_1}\Big)\bigg].
\end{eqnarray*}
if $d=3$. 
We then get
\begin{align*}
I_d(x,\omega_1,\omega_2)&=\int_{\mathbb R^d}\int_{\mathbb R^d}e^{{\rm
i}\left((c_1\omega_1+c_2\omega_2)\frac{|x-z|-|x-z'|}2+(c_1\omega_1-c_2\omega_2)\frac{|x-z|+|x-z'|}2\right)}F_d(z,z',x)B_1(z,z',x)dzdz'\\
&=\int_{\mathbb R^d}\int_{\mathbb R^d}e^{{\rm
i}\left((c_1\omega_1+c_2\omega_2)g\cdot e_1+(c_1\omega_1-c_2\omega_2)h\cdot e_1\right)}B_2(g,h,x)dgdh,
\end{align*}
where $e_1=(1,0,\cdots,0)\in\mathbb R^d$ and 
\begin{align*}
B_2(g,h,x)&=B_1(\tau^{-1}(g,h,x))\left[F_d(\tau^{-1}(g,h,x))|\det((\tau^{-1})'(g,h,x))|\right]\\
&=:B_1(\tau^{-1}(g,h,x))L^\tau(g,h,x).
\end{align*}

The way to get the detailed expression of $B_2$ is exactly the same as the
procedure used in \cite{LW}, which is based on the transformations $\eta$
defined by $\eta(z,z',x)=(v,w,x)$ with $v=z-z'$ and $w=z+z'$, and
$\gamma:=\eta\circ\tau^{-1}:(g,h,x)\mapsto(v,w,x)$. We decompose the coordinate
transform $\gamma$ into two parts $\gamma=(\gamma_1,\gamma_2)$ where
$\gamma_1(g,h,x)=v$ is the $\mathbb R^d$-valued function and
$\gamma_2(g,h,x)=(w,x)$ is the $\mathbb R^{2d}$-valued function. The Jacobian
$\gamma'$ corresponding to the decomposition of the variables is given by 
\begin{eqnarray*}
\gamma'=\begin{bmatrix} \gamma'_{11} & \gamma'_{12}\\ \gamma'_{21} & \gamma'_{22}
\end{bmatrix}
=\begin{bmatrix} \partial_{g}\gamma_1 & \partial_{(h,x)}\gamma_1\\ \partial_{g}\gamma_2 &\partial_{(h,x)}\gamma_2
\end{bmatrix}.
\end{eqnarray*}
Finally, we get
\begin{align*}
B_2(g,h,x)=\frac1{(2\pi)^d}\int_{\mathbb R^d}e^{{\rm i}g\cdot\xi}\sigma_2(h,x,\xi)d\xi,
\end{align*}
where the principal symbol of $\sigma_2$ has the form
\begin{align}\label{eq:2p}
\sigma_2^p(h,x,\xi)=\phi\Big(\frac{w(0,h,x)}2\Big)\vartheta(x)\left|(\gamma_{11}
'(0,h,x))^{-\top}\xi\right|^{-m}\left|\det(\gamma_{11}'(0,h,
x))\right|^{-1}L^\tau(0,h,x).
\end{align}
Here $\alpha:=\frac{h_2}{h_1}$, $\beta:=\frac{h_3}{h_1}$,
\begin{equation*}
w(0,h,x)=\left\{\begin{aligned}
&2h_1(\sin\alpha,\cos\alpha)+2x,\quad d=2,\\
&2h_1(\sin\alpha\cos\beta,\sin\alpha\sin\beta,\cos\alpha)+2x,\quad d=3,
\end{aligned}\right.
\end{equation*}
\begin{equation*}
\gamma_{11}'(0,h,x)=\left\{\begin{aligned}
&2\left[
\begin{array}{cc}
\sin\alpha-\alpha\cos\alpha&\cos\alpha\\
\cos\alpha+\alpha\sin\alpha&-\sin\alpha
\end{array}
\right],\quad d=2,\\
&2\left[
\begin{array}{ccc}
\sin\alpha\cos\beta-\alpha\cos\alpha\cos\beta+\beta\sin\alpha\sin\beta&\cos\alpha\cos\beta&-\sin\alpha\sin\beta\\
\sin\alpha\sin\beta-\alpha\cos\alpha\sin\beta-\beta\sin\alpha\cos\beta&\cos\alpha\sin\beta&\sin\alpha\cos\beta\\
\cos\alpha+\alpha\sin\alpha&-\sin\alpha&0
\end{array}
\right],\quad d=3,
\end{aligned}\right.
\end{equation*}
and the residual $r_2:=\sigma_2-\sigma_2^p\in S_{1,0}^{-m-1}$.
Note that $B_2(g,h,x)=[\mathcal{F}^{-1}\sigma_2(h,x,\cdot)](g)$. Hence,
\begin{align*}
I_d(x,\omega_1,\omega_2)&=\int_{\mathbb R^d}\int_{\mathbb R^d}e^{{\rm
i}\left((c_1\omega_1+c_2\omega_2)g\cdot e_1+(c_1\omega_1-c_2\omega_2)h\cdot e_1\right)}[\mathcal{F}^{-1}\sigma_2(h,x,\cdot)](g)dgdh\\
&=\int_{\mathbb R^d}e^{{\rm
i}(c_1\omega_1-c_2\omega_2)h_1}\sigma_2(h,x,-(c_1\omega_1+c_2\omega_2)e_1)dh\\
&=\frac{1}{({\rm
i}(c_1\omega_1-c_2\omega_2))^M}\int_{\mathbb R^d}e^{{\rm
i}(c_1\omega_1-c_2\omega_2)h_1}\partial_{h_1}^M\sigma_2(h,x,-(c_1\omega_1+c_2\omega_2)e_1)dh,
\end{align*}
where $|\partial_{h_1}^M\sigma_2(h,x,-(c_1\omega_1+c_2\omega_2)e_1)|\le
C_M|c_1\omega_1+c_2\omega_2|^{-m}$. Consequently,
\[
|I_d(x,\omega_1,\omega_2)|\le C_M(1+|\omega_1-\omega_2|)^{-M}(\omega_1+\omega_2)^{-m},
\]
which completes the proof of \eqref{d23}.

For \eqref{d25}, letting $\omega_1=\omega_2=\omega$, we have from 
\eqref{eq:2p} that 
\begin{align*}
I_d(x,\omega,\omega)&=\int_{\mathbb R^d}e^{{\rm
i}(c_1-c_2)\omega h_1}\sigma_2(h,x,-(c_1+c_2)\omega e_1)dh\\
& =\int_{\mathbb R^d}e^{{\rm
i}(c_1-c_2)\omega h_1}\sigma_2^p(h,x,-(c_1+c_2)\omega e_1)dh+O(\omega^{-(m+1)})\\
& =\int_{\mathbb R^d}e^{{\rm
i}(c_1-c_2)\omega h_1}\phi\Big(\frac{w(0,h,x)}2\Big)\vartheta(x)\left|(\gamma_{11}
'(0,h,x))^{-\top}(c_1+c_2)\omega e_1\right|^{-m}\\
&\quad\times \left|\det(\gamma_{11}'(0,h,
x))\right|^{-1}L^\tau(0,h,x)dh+O(\omega^{-(m+1)}).
\end{align*}

If $d=2$,  by the expression of $\gamma_{11}'(0,h,x)$, we have for any $x\in U$
that 
\[
I_2(x,\omega,\omega)=\frac{2^{m-2}}{(c_1+c_2)^m\omega^{m}}\int_{\mathbb R^2}e^{{\rm
i}(c_1-c_2)\omega
h_1}\phi\Big(\frac{w(0,h,x)}2\Big)L^\tau(0,h,x)dh+O(\omega^{-(m+1)}),
\]
where
\[
L^\tau(0,h,x)=F_2(\tau^{-1}(0,h,x))|\det((\tau^{-1})'(0,h,x))|.
\]
Here
\begin{align*}
F_2(\tau^{-1}(0,h,x))=\frac{(-h_1\sin\alpha)^{d_{11}+d_{21}}(-h_1\cos\alpha)^{d_{12}+d_{22}}}{h_1^{d_1+d_2}}
\end{align*}
and $|\det((\tau^{-1})'(0,h,x))|=4$ are proved in \cite[Proposition 4.1]{LW}.
To simplify the expression of $I_2$, we consider another
coordinate transformation $\varrho:{\mathbb{R}}^2\to{\mathbb{R}}^2$ defined by 
\[
\varrho(h)=\zeta:=h_1(\sin\alpha,\cos\alpha)+x,
\]
which has the Jacobian
\[
\text{det}(\varrho')=\left|\begin{array}{cc}
\sin\alpha-\alpha\cos\alpha&\cos\alpha\\[2pt]
\cos\alpha+\alpha\sin\alpha&-\sin\alpha
\end{array}\right|=-1.
\]
Then for any $x\in U$, we have
\begin{align*}
I_2(x,\omega,\omega)&=\frac{2^{m}}{(c_1+c_2)^m\omega^{m}}\int_{\mathbb
R^2}e^{{\rm
i}(c_1-c_2)\omega h_1}\phi\Big(\frac{w(0,h,x)}2\Big)\\
&\quad\frac{(-h_1\sin\alpha)^{d_{11}+d_{21}}(-h_1\cos\alpha)^{d_{12}+d_{22}}}{
h_1^{d_1+d_2}}dh+O(\omega^{-(m+1)})\\
&=\frac{2^{m}}{(c_1+c_2)^m\omega^{m}}\int_{\mathbb R^2}e^{{\rm
i}(c_1-c_2)\omega|x-\zeta|}\phi(\zeta)\frac{\left(x_1-\zeta_1\right)^{d_{11}+d_{21}}\left(x_2-\zeta_2\right)^{d_{12}+d_{22}}}{|x-\zeta|^{d_1+d_2}}d\zeta
+O(\omega^{-(m+1)}).
\end{align*}

If $d=3$, then for any $x\in U$,
\[
I_3(x,\omega,\omega)=\frac{2^{m-3}}{(c_1+c_2)^m\omega^{m}}\int_{\mathbb R^3}e^{{\rm
i}(c_1-c_2)\omega h_1}\phi\Big(\frac{w(0,h,x)}2\Big)\frac{L^\tau(0,h,x)}{|\sin\alpha|}dh+O(\omega^{-(m+1)}),
\]
where
\[
L^\tau(0,h,x)=F_3(\tau^{-1}(0,h,x))|\det((\tau^{-1})'(0,h,x))|.
\]
Here
\begin{align*}
F_3(\tau^{-1}(0,h,x))=\frac{(-h_1\sin\alpha\cos\beta)^{d_{11}+d_{21}}(-h_1\sin\alpha\sin\beta)^{d_{12}+d_{22}}(-h_1\cos\alpha)^{d_{13}+d_{23}}}{h_1^{d_1+d_2}}
\end{align*}
and $|\det((\tau^{-1})'(0,h,x))|=8\sin^2\alpha$ are proved in \cite[Theorem
4.5]{LW}. By defining the transformation $\varrho:\mathbb R^3\to\mathbb R^3$
with
\[
\varrho(h)=\zeta:=h_1(\sin\alpha\cos\beta,\sin\alpha\sin\beta,\cos\alpha)+x,
\]
whose Jacobian satisfies
\[
|\det(\varrho^{-1})'|=\left|\det\left(\frac{\gamma_{11}'(0,h,x)}2\right)\right|^{-1}=\frac1{|\sin\alpha|},
\]
we finally get for any $x\in U$ that
\begin{align*}
I_3(x,\omega,\omega)&=\frac{2^{m-3}}{(c_1+c_2)^m\omega^{m}}\int_{\mathbb
R^3}e^{{\rm
i}(c_1-c_2)\omega h_1}\phi\Big(\frac{w(0,h,x)}2\Big)\frac{8\sin^2\alpha}{|\sin\alpha|}\\
&\quad\frac{(-h_1\sin\alpha\cos\beta)^{d_{11}+d_{21}}(-h_1\sin\alpha\sin\beta)^{
d_{12}+d_{22}}(-h_1\cos\alpha)^{d_{13}+d_{23}}}{h_1^{d_1+d_2}}dh+O(\omega^{
-(m+1)})\\
&=\frac{2^{m}}{(c_1+c_2)^m\omega^{m}}\int_{\mathbb R^3}e^{{\rm
i}(c_1-c_2)\omega|x-\zeta|}\phi(\zeta)\\
&\quad\frac{\left(x_1-\zeta_1\right)^{d_{11}+d_{21}}\left(x_2-\zeta_2\right)^{d_
{12}+d_{22}}\left(x_3-\zeta_3\right)^{d_{13}+d_{23}}}{|x-\zeta|^{d_1+d_2}}d\zeta
+O(\omega^{-(m+1)}),
\end{align*}
which completes the proof. 

\section{Proof of Lemma \ref{lm:v}}\label{app:2}

For simplicity, we first introduce the following notations $\bar{\sigma}:=-\frac{\rm i}{64}\left(\frac{2}{\pi}\right)^{\frac{3}{2}}$, $\bm{J}(\zeta):=\zeta\zeta^\top$,
\begin{eqnarray}\notag
\psi_1(\zeta):=c_{\rm s}^{-\frac{1}{2}}e^{{\rm i}\kappa_{\rm s}|\zeta|},\quad \psi_2(\zeta):=c_{\rm s}^{\frac{1}{2}}e^{{\rm i}\kappa_{\rm s}|\zeta|}-c_{\rm p}^{\frac{1}{2}}e^{{\rm i}\kappa_{\rm p}|\zeta|},\quad \psi_3(\zeta):=c_{\rm s}^{\frac{3}{2}}e^{{\rm i}\kappa_{\rm s}|\zeta|}-c_{\rm p}^{\frac{3}{2}}e^{{\rm i}\kappa_{\rm p}|\zeta|}
\end{eqnarray}
for $\zeta\in\mathbb R^2$, 
and the integral 
\begin{eqnarray*}
&&A(\beta_1,\beta_2,\beta_3,p_1,p_2,p_3,\bm{M}_1,\bm{M}_2,\bm{M}_3):=
\int_D\int_D\frac{\beta_1(x-z)\beta_2(z-z')\beta_3(z'-x)}{|x-z|^{p_1}|z-z'|^{p_2
}|z'-x|^{p_3}}\times\\&&\qquad\qquad\qquad \qquad\qquad\qquad
\rho(z)\rho(z')\bm{M}_1(x-z)\bm{M}_2(z-z')\bm{M}_3(z'-x)\bm{a}dzdz',
\end{eqnarray*}
where $\beta_{i}\in\{\psi_1,\psi_2,\psi_3\}$ and $\bm{M}_i\in\{\bm{I},\bm{J}\}$ for $i=1,2,3$.
Substituting (\ref{d8}) into (\ref{eadd1}) shows that 
\begin{eqnarray*}
\bm{v}(x,\omega,\bm{a})=\sum_{k=1}^4\bm{v}_k(x,\omega,\bm{a})\omega^{
-(k+\frac{1}{2})},
\end{eqnarray*}
where 
\begin{eqnarray*}
\bm{v}_1(x,\omega,\bm{a})
=e^{-\frac{3}{4}\pi{\rm i}}\mu^{-3}\bar{\sigma} A(\psi_1,\psi_1,\psi_1,\frac{1}{2},\frac{1}{2},\frac{1}{2},\bm{I},\bm{I},\bm{I})+e^{-\frac{7}{4}\pi{\rm i}}\mu^{-2}\bar{\sigma} A(\psi_1,\psi_1,\psi_3,\frac{1}{2},\frac{1}{2},\frac{5}{2},\bm{I},\bm{I},\bm{J})\\
+e^{-\frac{7}{4}\pi{\rm i}}\mu^{-2}\bar{\sigma} A(\psi_1,\psi_3,\psi_1,\frac{1}{2},\frac{5}{2},\frac{1}{2},\bm{I},\bm{J},\bm{I})+e^{-\frac{11}{4}\pi{\rm i}}\mu^{-1}\bar{\sigma} A(\psi_1,\psi_3,\psi_3,\frac{1}{2},\frac{5}{2},\frac{5}{2},\bm{I},\bm{J},\bm{J})\\
+e^{-\frac{7}{4}\pi{\rm i}}\mu^{-2}\bar{\sigma} A(\psi_3,\psi_1,\psi_1,\frac{5}{2},\frac{1}{2},\frac{1}{2},\bm{J},\bm{I},\bm{I})+e^{-\frac{11}{4}\pi{\rm i}}\mu^{-1}\bar{\sigma} A(\psi_3,\psi_1,\psi_3,\frac{5}{2},\frac{1}{2},\frac{5}{2},\bm{J},\bm{I},\bm{J})\\
+e^{-\frac{11}{4}\pi{\rm i}}\mu^{-1}\bar{\sigma} A(\psi_3,\psi_3,\psi_1,\frac{5}{2},\frac{5}{2},\frac{1}{2},\bm{J},\bm{J},\bm{I})+e^{-\frac{15}{4}\pi{\rm i}}\bar{\sigma} A(\psi_3,\psi_3,\psi_3,\frac{5}{2},\frac{5}{2},\frac{5}{2},\bm{J},\bm{J},\bm{J}),
\end{eqnarray*}
\begin{eqnarray*}
\bm{v}_2(x,\omega,\bm{a})
=e^{-\frac{5}{4}\pi{\rm i}}\mu^{-2}\bar{\sigma} A(\psi_1,\psi_1,\psi_2,\frac{1}{2},\frac{1}{2},\frac{3}{2},\bm{I},\bm{I},\bm{I})+e^{-\frac{9}{4}\pi{\rm i}}\mu^{-1}\bar{\sigma} A(\psi_1,\psi_3,\psi_2,\frac{1}{2},\frac{5}{2},\frac{3}{2},\bm{I},\bm{J},\bm{I})\\
+e^{-\frac{9}{4}\pi{\rm i}}\mu^{-1}\bar{\sigma} A(\psi_3,\psi_1,\psi_2,\frac{5}{2},\frac{1}{2},\frac{3}{2},\bm{J},\bm{I},\bm{I})+e^{-\frac{13}{4}\pi{\rm i}}\bar{\sigma} A(\psi_3,\psi_3,\psi_2,\frac{5}{2},\frac{5}{2},\frac{3}{2},\bm{J},\bm{J},\bm{I})\\
+e^{-\frac{5}{4}\pi{\rm i}}\mu^{-2}\bar{\sigma} A(\psi_1,\psi_2,\psi_1,\frac{1}{2},\frac{3}{2},\frac{1}{2},\bm{I},\bm{I},\bm{I})+e^{-\frac{9}{4}\pi{\rm i}}\mu^{-1}\bar{\sigma} A(\psi_1,\psi_2,\psi_3,\frac{1}{2},\frac{3}{2},\frac{5}{2},\bm{I},\bm{I},\bm{J})\\
+e^{-\frac{9}{4}\pi{\rm i}}\mu^{-1}\bar{\sigma} A(\psi_3,\psi_2,\psi_1,\frac{5}{2},\frac{3}{2},\frac{1}{2},\bm{J},\bm{I},\bm{I})+e^{-\frac{13}{4}\pi{\rm i}}\bar{\sigma} A(\psi_3,\psi_2,\psi_3,\frac{5}{2},\frac{3}{2},\frac{5}{2},\bm{J},\bm{I},\bm{J})\\
+e^{-\frac{5}{4}\pi{\rm i}}\mu^{-2}\bar{\sigma} A(\psi_2,\psi_1,\psi_1,\frac{3}{2},\frac{1}{2},\frac{1}{2},\bm{I},\bm{I},\bm{I})+e^{-\frac{9}{4}\pi{\rm i}}\mu^{-1}\bar{\sigma} A(\psi_2,\psi_1,\psi_3,\frac{3}{2},\frac{1}{2},\frac{5}{2},\bm{I},\bm{I},\bm{J})\\
+e^{-\frac{9}{4}\pi{\rm i}}\mu^{-1}\bar{\sigma} A(\psi_2,\psi_3,\psi_1,\frac{3}{2},\frac{5}{2},\frac{1}{2},\bm{I},\bm{J},\bm{I})+e^{-\frac{13}{4}\pi{\rm i}}\bar{\sigma} A(\psi_2,\psi_3,\psi_3,\frac{3}{2},\frac{5}{2},\frac{5}{2},\bm{I},\bm{J},\bm{J}),
\end{eqnarray*}
\begin{eqnarray*}
\bm{v}_3(x,\omega,\bm{a})
=e^{-\frac{7}{4}\pi{\rm i}}\mu^{-1}\bar{\sigma} A(\psi_1,\psi_2,\psi_2,\frac{1}{2},\frac{3}{2},\frac{3}{2},\bm{I},\bm{I},\bm{I})+e^{-\frac{11}{4}\pi{\rm i}}\bar{\sigma} A(\psi_3,\psi_2,\psi_2,\frac{5}{2},\frac{3}{2},\frac{3}{2},\bm{J},\bm{I},\bm{I})\\
+e^{-\frac{7}{4}\pi{\rm i}}\mu^{-1}\bar{\sigma} A(\psi_2,\psi_1,\psi_2,\frac{3}{2},\frac{1}{2},\frac{3}{2},\bm{I},\bm{I},\bm{I})+e^{-\frac{11}{4}\pi{\rm i}}\bar{\sigma} A(\psi_2,\psi_3,\psi_2,\frac{3}{2},\frac{5}{2},\frac{3}{2},\bm{I},\bm{J},\bm{I})\\
+e^{-\frac{7}{4}\pi{\rm i}}\mu^{-1}\bar{\sigma} A(\psi_2,\psi_2,\psi_1,\frac{3}{2},\frac{3}{2},\frac{1}{2},\bm{I},\bm{I},\bm{I})+e^{-\frac{11}{4}\pi{\rm i}}\bar{\sigma} A(\psi_2,\psi_2,\psi_3,\frac{3}{2},\frac{3}{2},\frac{5}{2},\bm{I},\bm{I},\bm{J})
\end{eqnarray*}
and
\begin{eqnarray*}
\bm{v}_4(x,\omega,\bm{a})
=e^{-\frac{9}{4}\pi{\rm i}}\bar{\sigma} A(\psi_2,\psi_2,\psi_2,\frac{3}{2},\frac{3}{2},\frac{3}{2},\bm{I},\bm{I},\bm{I}).
\end{eqnarray*}
Then applying the Cauchy--Schwartz inequality leads to
\begin{eqnarray}\label{e40}
\frac{1}{Q-1}\int_1^Q\omega^{m+2}|\bm v(x,\omega,\bm{a})|^2d\omega\lesssim
\frac{1}{Q-1}\sum_{k=1}^4\int_1^Q\omega^{m-(2k-1)}|\bm{v}_k(x,\omega,\bm
{a})|^2d\omega.
\end{eqnarray}
Noting that $\omega^{-\frac12}\frac{\beta_2(z-z')}{|z-z'|^{p_2}}\bm M_2(z-z')$ involved in $\bm v_k$ has the same singularity as $\bm G^{(0)}(z,z',\omega)$, we get
\begin{align*}
\left|\omega^{-\frac32}\bm v_k(x,\omega,\bm a)\right|&\lesssim\bigg|\omega^{-\frac32}\int_D\int_D\frac{\beta_1(x-z)\beta_2(z-z')\beta_3(z'-x)}{|x-z|^{p_1}|z-z'|^{p_2
}|z'-x|^{p_3}}\\&\quad\times
\rho(z)\rho(z')\bm{M}_1(x-z)\bm{M}_2(z-z')\bm{M}_3(z'-x)\bm{a}dzdz'\bigg|\\
&\lesssim\left\|\omega^{-1}\rho\otimes\rho\frac{\beta_1(x-\cdot)\beta_3(\cdot-x)}{|x-\cdot|^{p_1}|\cdot-x|^{p_3}}\bm M_1(x-\cdot)\bm M_3(\cdot-x)\bm a\right\|_{\bm W_0^{-2s,q}(D\times D)}\\
&\quad\times\left\|\omega^{-\frac12}\frac{\beta_2(\cdot-\cdot)}{|\cdot-\cdot|^{p_2}}\bm M_2(\cdot-\cdot)\right\|_{(W^{2s,p}(D\times D))^{2\times2}}\\
&\lesssim\omega^{-\frac32+3s}
\end{align*}
based on Lemma \ref{lm:G0} and a similar argument used in \eqref{eq:u-v}. As a result,
\begin{align}\label{eq:vk2}
\lim_{Q\to\infty}\frac{1}{Q-1}\sum_{k=2}^4\int_1^Q\omega^{m-(2k-1)}|\bm{v}_k(x,\omega,\bm
{a})|^2d\omega
&\lesssim\lim_{Q\to\infty}\frac{1}{Q-1}\sum_{k=2}^4\int_1^Q\omega^{m-(2k-1)+6s}d\omega\notag\\
&\lesssim\lim_{Q\to\infty}\frac{Q^{m-2+6s}-1}{Q-1}=0
\end{align}
almost surely by choosing $s\in(\frac{2-m}2,\frac12)$ with $m\in(\frac53,2]$ such that $m-2+6s<1$. 
Hence, to prove \eqref{eq:v}, according to \eqref{e40} and \eqref{eq:vk2}, we only need to show for $x\in U$ that 
\begin{eqnarray}\label{h24}
\lim_{Q\to\infty}\frac{1}{Q-1}\int_1^Q\omega^{m-1}|\bm{v}_1(x,\omega,\bm{a}
)|^2d\omega = 0
\end{eqnarray}
in the almost surely sense. 
Note that
\begin{eqnarray*}\label{e42}
\frac{1}{Q-1}\int_1^Q\omega^{m-1}|\bm{v}_1(x,\omega,\bm{a})|^2d\omega\leq
\int_1^{\infty}\frac{\omega\bm 1_{[1,Q]}(\omega)}{Q-1}\omega^{m-2}|\bm{v}_1(x,\omega,\bm{a}
)|^2d\omega
\end{eqnarray*}
with $\bm 1_{[1,Q]}$ being the characteristic function on the interval $[1,Q]$, and it holds point-wisely that
\[
\lim_{Q\to\infty}\frac{\omega\bm 1_{[1,Q]}(\omega)}{Q-1}=0\quad\text{and}\quad\left|\frac{\omega\bm 1_{[1,Q]}(\omega)}{Q-1}\omega^{m-2}|\bm{v}_1(x,\omega,\bm{a}
)|^2\right|\lesssim\omega^{m-2}|\bm v_1(x,\omega,\bm{a}
)|^2.
\]
By the dominated convergence theorem, to show \eqref{h24}, it suffices to prove 
\begin{eqnarray}\label{e39}
\int_1^{\infty}\omega^{m-2}\mathbb E|\bm{v}_1(x,\omega,\bm{a})|^2d\omega<\infty.
\end{eqnarray}
Substituting $\psi_1$, $\psi_2$, $\psi_3$ and $\bm{J}$ into
$\bm{v}_1$ gives that $\bm{v}_1$ is a linear combination of the integral
\begin{eqnarray}\label{e45}
{\mathbb B}(x,\omega):=\int_D\int_De^{{\rm i}\omega(c_1|x-z|+c_2|z-z'|+c_3|z'-x|)}{\mathbb K}(x,z,z')\rho(z)\rho(z')dzdz',
\end{eqnarray}
where $c_1,c_2,c_3\in\{c_{\rm s},c_{\rm p}\}$ and 
\begin{eqnarray}\label{e46}
{\mathbb K}(x,z,z')=\frac{(x_1-z_1)^{p_1}(x_2-z_2)^{p_2}(z_1-z'_1)^{p_3}(z_2-z'_2)^{p_4}(z'_1-x_1)^{p_5}(z'_2-x_2)^{p_6}}{|x-z|^{p_7}|z-z'|^{p_8}|z'-x|^{p_9}}
\end{eqnarray}
with 
\begin{align*}
(p_1,\cdots,p_9)\in\Big\{(p_1,\cdots,p_9)\Big|& p_i\in\{0,1,2\},1\le i\le6, p_j\in\Big\{\frac12,\frac52\Big\},7\le j\le9, \\
&p_7-p_1-p_2=\frac{1}{2},p_8-p_3-p_4=\frac{1}{2},p_9-p_5-p_6=\frac{1}{2}\Big\}.
\end{align*}

It follows from
the Cauchy--Schwartz inequality that a sufficient condition for (\ref{e39}) is 
\begin{eqnarray}\label{e50}
\int_1^{\infty}\omega^{m-2}\mathbb E|\mathbb B(x,\omega)|^2d\omega<\infty.
\end{eqnarray}
To deal with the roughness of the random potential $\rho$, similar to the technique used in \cite{LLW}, we introduce a modification $\rho_\varepsilon:=\rho*\varphi_\varepsilon$ with $\varphi_\varepsilon(x):=\varepsilon^{-2}\varphi(x/\varepsilon)$ for $\varepsilon>0$, where $\varphi\in C_0^\infty(\mathbb R^2)$ is a radially symmetric function satisfying $\int_{\mathbb R^2}\varphi(x)dx=1$, and define
\begin{align}\label{eq:Bepsilon}
{\mathbb B}_\varepsilon(x,\omega):=\int_D\int_De^{{\rm i}\omega(c_1|x-z|+c_2|z-z'|+c_3|z'-x|)}{\mathbb K}(x,z,z')\rho_\varepsilon(z)\rho_\varepsilon(z')dzdz'
\end{align}
by replacing $\rho$ in \eqref{e45} by $\rho_\varepsilon$.
It is easy to show that $\lim_{\varepsilon\to\infty}\mathbb E|\mathbb B_\varepsilon|^2=\mathbb E|\mathbb B|^2$ (cf. \cite{LLW}), which, together with Fatou's lemma and the fact $m\in(\frac53,2]$, leads to
\[
\int_1^{\infty}\omega^{m-2}\mathbb E|\mathbb B(x,\omega)|^2d\omega\le\varlimsup_{\varepsilon\to 0}\int_1^{\infty}\mathbb E|\mathbb B_\varepsilon(x,\omega)|^2d\omega.
\]
As a result, it suffices to show
\begin{eqnarray}\label{e51}
\varlimsup_{\varepsilon\to 0}\int_1^{\infty}{\mathbb E}|{\mathbb B}_{\varepsilon}(x,\omega)|^2d\omega<\infty\quad \forall~x\in U.
\end{eqnarray}

The procedure to show \eqref{e51} is similar to the proof for the acoustic wave case with $m=d=2$ given in \cite{LPS08}. For the self-contained purpose and completeness, we present the details below. 

The basic idea is to express ${\mathbb B}_{\varepsilon}$ in terms of a one-dimensional Fourier transform and then get the estimate with respect to $\omega$ by utilizing the Parseval formula. 
To this end, we first consider the phase function $L(z,z'):=c_1|x-z|+c_2|z-z'|+c_3|z'-x|$, which is smooth in the domain $\Theta:=\{(z,z')\in D\times D| z\neq z'\}$ and
\begin{eqnarray*}\label{e55}
\nabla_z L(z,z') = c_1\frac{z-x}{|z-x|}+c_2\frac{z-z'}{|z-z'|},\quad \nabla_{z'} L(z,z') = c_2\frac{z'-z}{|z'-z|}+c_3\frac{z'-x}{|z'-x|}\quad\forall~(z,z')\in\Theta.
\end{eqnarray*}
Without loss of generality, we assume that $0\in U$ such that  $|z|$ and $|z'|$ are bounded from below and above for $z,z'\in D$ since $U$ has a positive distance to $D$.  
Hence, it holds for $(z,z')\in \Theta$ that
\begin{eqnarray}\label{e56}
0<C_1\le |\nabla L(z,z')|\le C_2<\infty
\end{eqnarray}
for some constants $C_1$ and $C_2$, where we use the facts that $U$ is bounded and convex, and that
\begin{eqnarray*}\label{e58}
(z,z')\cdot \nabla L(z,z')&=&c_1z\cdot\frac{z-x}{|z-x|}+c_2|z-z'|+c_3z'\cdot\frac{z'-x}{|z'-x|}\notag\\
&=&c_1|z|\cos\theta_1+c_2|z-z'|+c_3|z'|\cos\theta_2
\ge C_3>0
\end{eqnarray*}
for some constant $C_3$ with $\theta_1$ and $\theta_2$ being the angle between $z$ and $z-x$ and the angle between $z'$ and $z'-x$, respectively. 
Due to the boundedness of $D$ and $U$ and the fact that they have a positive distance, the surface
\begin{eqnarray*}\label{e59}
\Gamma_t':=\{(z,z')\in D\times D|L(z,z')=t\},\quad  t>0
\end{eqnarray*}
is nonempty only for $t\in[T_0,T_1]$ with some positive values $T_0=T_0(x)$ and $T_1=T_1(x)$.

For a fixed $\tilde{t}\in [T_0, T_1]$, there exists a $\tilde{\eta}=\tilde{\eta}(\tilde{t})$ and an open cone $E=E(\tilde{t})\subset {\mathbb R^4}$ centered at the origin such that it holds for $t_0=t_0(\tilde t):=\tilde{t}-\tilde{\eta}$ and $t_1=t_1(\tilde t):=\tilde{t}+\tilde{\eta}$ that
\begin{eqnarray}\label{e60}
D\times D\cap\{t_0<L(z,z')<t_1\}\subset E\cap\{t_0<L(z,z')<t_1\}:=\Gamma
\end{eqnarray}
and
\[
\Gamma=\bigcup_{t\in[t_0,t_1]}\Gamma_t\quad\text{with}\quad\Gamma_t:=\Gamma\cap\{(z,z')|L(z,z')=t\}.
\]

According to (\ref{eq:Bepsilon}) and (\ref{e60}), we obtain 
\begin{align*}
{\mathbb B}_{\varepsilon}(x,\omega)&=\int_{\Gamma}e^{{\rm i}\omega L(z,z')}{\mathbb K}(x,z,z')\rho_{\varepsilon}(z)\rho_{\varepsilon}(z')dzdz'\\
&=\int_{t_0}^{t_1}e^{{\rm i}\omega t}\int_{\Gamma_t}{\mathbb K}(x,z,z')|\nabla L(z,z')|^{-1}\rho_{\varepsilon}(z)\rho_{\varepsilon}(z')d\mathcal{H}^3(z,z')dt\\
&=:\int_{t_0}^{t_1}e^{{\rm i}\omega t}S_{\varepsilon}(t)dt=(\mathcal{F}S_{\varepsilon})(-\omega),
\end{align*}
where 
\begin{eqnarray}\label{e68}
S_{\varepsilon}(t):=\int_{\Gamma_t}{\mathbb K}(x,z,z')|\nabla L(z,z')|^{-1}\rho_{\varepsilon}(z)\rho_{\varepsilon}(z')d\mathcal{H}^3(z,z')
\end{eqnarray}
is compactly supported in $[T_0, T_1]$ and the integral in \eqref{e68} is with respect to the three-dimensional Hausdorff measure $\mathcal{H}^3$ on $\Gamma_t$. 
Note that
\begin{eqnarray}\notag
{\mathbb E}|S_{\varepsilon}(t)|^2=\int_{\Gamma_t\times \Gamma_t}{\mathbb K}(x,z,z'){\mathbb K}(x,\tilde{z},\tilde{z}')|\nabla L(z,z')|^{-1}|\nabla L(\tilde{z},\tilde{z}')|^{-1}\\\label{e72}
{\mathbb E}[\rho_{\varepsilon}(z)\rho_{\epsilon}(z')\rho_{\varepsilon}(\tilde{z})\rho_{\varepsilon}(\tilde{z}')]d\mathcal{H}^3(z,z')d\mathcal{H}^3(\tilde{z},\tilde{z}'),
\end{eqnarray}
where $|\nabla L(z,z')|^{-1}|\nabla L(\tilde{z},\tilde{z}')|^{-1}$ is bounded according to \eqref{e56} and 
\begin{eqnarray*}\label{e73}
|{\mathbb K}(x,z,z')|\leq |x-z|^{-\frac{1}{2}}|z-z'|^{-\frac{1}{2}}|z'-x|^{-\frac{1}{2}}\lesssim |z-z'|^{-\frac{1}{2}}
\end{eqnarray*}
for $x\in U$ and $(z,z')\in\Theta$ according to (\ref{e46}).
Moreover, the Wick formula leads to
\begin{eqnarray*}\label{e75}
\mathbb E[\rho_{\varepsilon}(z)\rho_{\varepsilon}(z')\rho_{\varepsilon}(\tilde{z})\rho_{\varepsilon}(\tilde{z}')]
=\mathcal K_{\rho_\varepsilon}(z,z')\mathcal K_{\rho_\varepsilon}(\tilde{z},\tilde{z}')+\mathcal K_{\rho_\varepsilon}(z,\tilde{z})\mathcal K_{\rho_\varepsilon}(z',\tilde{z}')+\mathcal K_{\rho_\varepsilon}(z,\tilde{z}')\mathcal K_{\rho_\varepsilon}(z',\tilde{z}),
\end{eqnarray*}
where $\mathcal K_{\rho_\varepsilon}(z,z'):=\mathbb E[\rho_\varepsilon(z)\rho_\varepsilon(z')]$ is the covariance function of $\rho_\varepsilon$.
Thus, (\ref{e72}) turns to be
\begin{align*}
{\mathbb E}|S_{\varepsilon}(t)|^2&\lesssim\int_{\Gamma_t\times\Gamma_t}|z-z'|^{-\frac{1}{2}}|\tilde{z}-\tilde{z}'|^{-\frac{1}{2}}\left|\mathcal K_{\rho_\varepsilon}(z,z')\mathcal K_{\rho_\varepsilon}(\tilde{z},\tilde{z}')\right|d\mathcal{H}^3(z,z')d\mathcal{H}^3(\tilde{z},\tilde{z}')\\
&\quad +\int_{\Gamma_t\times\Gamma_t}|z-z'|^{-\frac{1}{2}}|\tilde{z}-\tilde{z}'|^{-\frac{1}{2}}\left|\mathcal K_{\rho_\varepsilon}(z,\tilde{z})\mathcal K_{\rho_\varepsilon}(z',\tilde{z}')\right|d\mathcal{H}^3(z,z')d\mathcal{H}^3(\tilde{z},\tilde{z}')\\
&\quad +\int_{\Gamma_t\times\Gamma_t}|z-z'|^{-\frac{1}{2}}|\tilde{z}-\tilde{z}'|^{-\frac{1}{2}}\left|\mathcal K_{\rho_\varepsilon}(z,\tilde{z}')\mathcal K_{\rho_\varepsilon}(z',\tilde{z})\right|d\mathcal{H}^3(z,z')d\mathcal{H}^3(\tilde{z},\tilde{z}')\\
&=:{\rm I}_1^*+{\rm I}_2^*+{\rm I}^*_3.
\end{align*}
For sufficiently small $\varepsilon>0$, it follows from \cite[Lemma 10]{LLW} that
\begin{eqnarray*}\label{e76}
&&|\mathcal K_{\rho_\varepsilon}(z,z')|\lesssim|\ln|z-z'||+O(1)\quad{\rm for}\;\; m=2,\\
&&|\mathcal K_{\rho_\varepsilon}(z,z')|\lesssim|z-z'|^{-(2-m)}+O(1)\quad{\rm for}\;\; m\in (1, 2).
\end{eqnarray*}
Hence, for any $m\in(\frac53,2]$, there exists a sufficiently small $\epsilon>0$ such that
\[
|\mathcal K_{\rho_\varepsilon}(z,z')|\lesssim|z-z'|^{-(2-m+\epsilon)}
\]
when $|z-z'|\ll1$.

For ${\rm I}^*_1$, we have 
\begin{eqnarray*}\label{e78}
{\rm I}^*_1\lesssim\int_{\Gamma_t}|z-z'|^{-\frac{1}{2}}|\mathcal K_{\rho_\varepsilon}(z,z')|d\mathcal{H}^3(z,z')\int_{
\Gamma_t
}|\tilde{z}-\tilde{z}'|^{-\frac{1}{2}}|\mathcal K_{\rho_\varepsilon}(\tilde z,\tilde z')|d\mathcal{H}^3(\tilde{z},\tilde{z}
')<\infty
\end{eqnarray*}
according to \cite[Lemma 6]{LPS08}. 

For ${\rm I}^*_2$, it follows from the H\"{o}lder inequality and \cite[Lemma 6]{LPS08} that 
\begin{align*}
{\rm I}^*_2&\lesssim\int_{\Gamma_t\times\Gamma_t}|z-z'|^{-\frac{1}{2}}|\tilde{z}-\tilde{z}'|^{-\frac{1}{2}}|z-\tilde{z}|^{-(2-m+\epsilon)}|z'-\tilde{z}'|^{-(2-m+\epsilon)}d\mathcal{H}^3(z,z')d\mathcal{H}^3(\tilde{z},\tilde{z}')\\
&\lesssim \left[\int_{\Gamma_t\times\Gamma_t}|z-z'|^{-\frac{3}{2}}|\tilde{z}-\tilde{z}'|^{-\frac{3}{2}}d\mathcal{H}^3(z,z')d\mathcal{H}^3(\tilde{z},\tilde{z}')\right]^{\frac{1}{3}}\times\\
&\quad \left[\int_{\Gamma_t\times\Gamma_t}|z-\tilde{z}|^{-\frac{3}{2}(2-m+\epsilon)}|z'-\tilde{z}'|^{-\frac{3}{2}(2-m+\epsilon)}d\mathcal{H}^3(z,z')d\mathcal{H}^3(\tilde{z},\tilde{z}')\right]^{\frac{2}{3}}\\
&\lesssim \left[\int_{\Gamma_t}|z-z'|^{-\frac{3}{2}}d\mathcal{H}^3(z,z')\int_{\Gamma_t}|\tilde{z}-\tilde{z}'|^{-\frac{3}{2}}d\mathcal{H}^3(\tilde{z},\tilde{z}')\right]^{\frac{1}{3}}\times\\
&\quad\left[\int_{\Gamma_t\times\Gamma_t}|z-\tilde{z}|^{-3(2-m+\epsilon)}d\mathcal{H}^3(z,z')d\mathcal{H}^3(\tilde{z},\tilde{z}')\right]^{\frac13}\times\\
&\quad\left[\int_{\Gamma_t\times\Gamma_t}|z'-\tilde{z}'|^{-3(2-m+\epsilon)}d\mathcal{H}^3(z,z')d\mathcal{H}^3(\tilde{z},\tilde{z}')\right]^{\frac{1}{3}}<\infty
\end{align*}
for $m\in (\frac53, 2]$. 
An argument similar to the one used in ${\rm I}^*_2$ shows that ${\rm I}^*_3<\infty$. 

Hence, for any fixed $\tilde t\in[T_0,T_1]$, there exists a constant $C(\tilde t)$ such that
\begin{eqnarray*}\label{e69}
{\mathbb E}|S_{\varepsilon}(t)|^2\leq C(\tilde{t})\quad {\rm for\;\;all\;\;} t\in (t_0(\tilde{t}), t_1(\tilde{t}))
\end{eqnarray*}
and sufficiently small $\varepsilon>0$.
By compactness, there is a countable subset $ \Lambda\subset[T_0,T_1]$ with finite elements such that
\[
[T_0,T_1]\subset\bigcup_{\tilde t\in\Lambda}(t_0(\tilde t),t_1(\tilde t)).
\]
By defining $C:=\sum_{\tilde t\in\Lambda}C(\tilde t)$, we get 
\begin{eqnarray*}\label{e70}
{\mathbb E}|S_{\varepsilon}(t)|^2\leq C\quad {\rm for\;\;all\;\;} t\in [T_0, T_1]
\end{eqnarray*}
and sufficiently small $\varepsilon>0$. Then it follows from the Parseval formula that
 \begin{eqnarray*}\label{e71}
\varlimsup_{\varepsilon\to 0}\int_1^{\infty}{\mathbb E}|{\mathbb B}_{\varepsilon}(x,\omega)|^2d\omega=\varlimsup_{\varepsilon\to 0}\int_{T_0}^{T_1}{\mathbb E}|S_{\varepsilon}(t)|^2dt\leq C(T_1-T_0)<\infty,
 \end{eqnarray*}
which yields \eqref{e51} and thus \eqref{e50}. 
Then \eqref{e39} holds due to \eqref{e50}, which completes the proof together with \eqref{eq:vk2}.


\begin{thebibliography}{10}

\bibitem{AF03}
R. Adams and J. Fournier, Sobolev Spaces, 2nd ed., Academic
Press, Amsterdam, 2003.

\bibitem{ABG-15}
H. Ammari, E. Bretin, J. Garnier, H. Kang, H. Lee, and A. Wahab, Mathematical
Methods in Elasticity Imaging, Princeton University Press, New Jersey, 2015.

\bibitem{AS} 
M. Abramowitz and I. Stegun, Tables of Mathematical Functions,
9th print, New York: Dover, 1970.


\bibitem{BCL16}
G. Bao, C. Chen, and P. Li, Inverse random source scattering
problems in several dimensions, SIAM/ASA J. Uncertainty Quantification, 4
(2016), 1263--1287.

\bibitem{BCL18}
G. Bao, C. Chen, and P. Li, Inverse random source scattering for
elastic waves, SIAM J. Numer. Anal., 55 (2017), 2616--2643.

\bibitem{BCLZ}
G. Bao, S.-N. Chow, P. Li, and H. Zhou, An inverse random source
problem for the Helmholtz equation, Math. Comp., 83 (2014), 215--233.


\bibitem{BX}
G. Bao and X. Xu, An inverse random source problem in quantifying
the elastic modulus of nanomaterials, Inverse Problems, 29 (2013), 015006.

\bibitem{BLZ}
G. Bao, P. Li, and Y. Zhao, Stability for the inverse source
problems in elastic and electromagnetic waves, J. Math. Pures Appl.,
134 (2020), 122--178.

\bibitem{BFPRV}
J. A. Barcel\'{o}, M. Folch-Gabayet, S. P\'{e}rez-esteva, A. Ruiz, and M. C.
Vilela, Uniqueness for inverse elastic medium problems, SIAM J. Math. Anal.,
50 (2018), 3939--3962.


\bibitem{CHL} 
P. Caro, T. Helin, and M. Lassas, Inverse scattering for a random potential,  Anal. Appl., 17 (2019), 513--567.

\bibitem{E11}
G. Eskin, Lectures on Linear Partial Differential Equations, Grad. Stud. Math.,
vol. 123, AMS, Providence, 2011.

\bibitem{GFI}
J. F. Greenleaf, M. Fatemi, and M. Insana, Selected methods for imaging
elastic properties of biological tissues, Annu. Rev. Biomed. Eng., 5 (2003),
57--78.

\bibitem{H1} 
P. H\"{a}hner, A uniqueness theorem in inverse scattering of
elastic waves, J. Appl. Math., 51 (1993), 201--215.

\bibitem{H2}
P. H\"{a}hner, On uniqueness for an inverse problem in inhomogeneous
elasticity, IMA J. Appl. Math., 67 (2002), 127--143.

\bibitem{PH}
P. Hajlasz, Sobolev spaces on an arbitrary metric space, Potential Anal., 5 (1996), 403--415.

\bibitem{HLP} 
T. Helin, M. Lassas, and L. P\"{a}iv\"{a}rinta, Inverse acoustic
scattering problem in half-space with anisotropic random impedance,
J. Differential Equations, 262 (2017), 3139--3168.

\bibitem{L3} 
L. H\"{o}rmander, The Analysis of Linear Partial Differential
Operators, vol. III,  Berlin: Springer--Verlag, 1985.


\bibitem{LL-86}
L. D. Landau and E. M. Lifshitz, Theory of Elasticity, Oxford: Pergamon 1986.

\bibitem{LPS08} 
M. Lassas, L. P\"{a}iv\"{a}rinta, and E. Saksman, Inverse
scattering problem for a two dimensional random potential, Commun. Math. Phys.,
279 (2008), 669--703.

\bibitem{LHL}
J. Li, T. Helin, and P. Li, Inverse random source problems for
time-harmonic acoustic and elastic waves, Commun. Part. Diff. Eqs., 45 (2020), 1335--1380.  

\bibitem{LL}
J. Li and P. Li, Inverse elastic scattering for a random source, 
SIAM J. Math. Anal., 51 (2019), 4570--4603.

\bibitem{LLW}
J. Li, P. Li, and X. Wang, Inverse random potential scattering for elastic waves, arXiv:2102.07062.

\bibitem{LLM}
J. Li, H. Liu, and S. Ma, Determining a random {Schr\"odinger} operator: both
potential and source are random, Commun. Math. Phys., to appear. 

\bibitem{LCL}
M. Li, C. Chen, and P. Li, Inverse random source scattering
for the Helmholtz equation in inhomogeneous media, Inverse Problems, 34
(2018), 015003.

\bibitem{PL} 
P. Li, An inverse random source scattering problem in
inhomogeneous media, Inverse Problems, 27 (2011), 035004.

\bibitem{LW}
P. Li and X. Wang, Inverse random source scattering for the Helmholtz equation with attenuation, SIAM J. Appl. Math., to appear. 

\bibitem{LW2}
P. Li and X. Wang, Regularity of distributional solutions to stochastic acoustic and elastic scattering problems, J. Differential Equations, 285 (2021), 640--662. 

\bibitem{LSSW16}
A. Lodhia, S. Sheffield, X. Sun, and S. Watson,
Fractional Gaussian fields: a survey, Probab. Surv., 13 (2016), 1--56.

\bibitem{M} 
W. McLean, Strongly Elliptic Systems and Boundary Integral Equations,
Cambridge University Press, 2000.

\bibitem{OCPY}
J. Ophir, I. Cespedes, H. Ponnekanti, Y. Yazdi, and X. Li, Elastography: A
quantitative method for imaging the elasticity of biological tissues, Ultrason.
Imag., 13 (1991), 111--134.

\bibitem{LP}
L. P\"{a}iv\"{a}rinta, Analytic methods for inverse scattering theory, In:
New Analytic and Geometric Methods in Inverse Problems, Springer Lecture
Notes, ed. K. Bingham, Y. Kurylev, E. Somersalo, Springer, New
York, 2003, 165--185.

\bibitem{Ra-TAMS}
L. V. Rachele, Uniqueness of the density in an inverse problem for isotropic
elastodynamics, Trans. Amer. Math. Soc., 355 (2003), 4781--4806 .

\bibitem{UWW}
G. Uhlmann, J.-N. Wang, and C.-T. Wu, Reconstruction of inclusions in an elastic
body, J. Math. Pures Appl., 91 (2009), 569--582.


\end{thebibliography}
\end{document}